\newtheorem{theorem}{Theorem}[section]
\newtheorem{lemma}[theorem]{Lemma}
\newtheorem{definition}[theorem]{Definition}
\newtheorem{proposition}[theorem]{Proposition}
\newtheorem{remark}[theorem]{Remark}
\def\thetheorem{\thesection.\arabic{theorem}}
\def\thesection{\arabic{section}}
\def\theequation {\thesection.\arabic{equation}}
\def\beq{\begin{equation}\displaystyle}
\def\eeq{\end{equation}}
\def\bel{\begin{equation} \displaystyle \begin{array}{l} }
\def\eel{\end{array} \end{equation} }
\def\bell{\begin{equation} \displaystyle \begin{array}{ll}  }
\def\eell{\end{array} \end{equation} }
\def\bea{\begin{eqnarray}}
\def\eea{\end{eqnarray} }
\def\bean{\begin{eqnarray*}}
\def\eean{\end{eqnarray*} }
\newenvironment{proof}{\noindent{\bf Proof.~}}
{{\mbox{}\hfill {\small \fbox{}}\\}}
\def\qed{\mbox{}\hfill {\small \fbox{}}\\}
\renewcommand\appendix{\bigskip {\noindent \Large \bf Appendix}
  \setcounter{section}{0}%
  \setcounter{subsection}{0}%
\setcounter{equation}{0}%
\setcounter{theorem}{0}%
\def\thetheorem{A.\arabic{theorem}}
\def\theequation {A.\arabic{equation}}}
\def\NN{\mathbb{N}}
\def\RR{\mathbb{R}}
\def\ZZ{\mathbb{Z}}
\def\VV{\mathbb{V}}
\def\DD{\mathbb{D}}
\def\ds{\displaystyle}
\def\bs{\bigskip}
\def\eps{\varepsilon}
\def\bar#1{{\overline #1}}
\def\pa{\partial}
\def\calE{{\mathcal E}}
\def\calL{{\mathcal L}}
\def\calM{{\mathcal M}}
\def\calN{{\mathcal N}}
\def\calJ{{\mathcal J}}
\def\calS{{\mathcal S}}
\def\calV{{\mathcal V}}
\def\calH{{\mathcal R}}
\def\ds{{\displaystyle}}
\def\Mtilde{\widetilde{M}}
\def\barf{\bar{f}}
\newcommand {\ftilde} {\widetilde{f}}
\begin{document}

\title{Some examples of kinetic schemes whose diffusive limit is Il'in's exponential-fitting}

\author{L. Gosse\thanks{Istituto per le Applicazioni del Calcolo, via dei Taurini, 19, 00185 Rome, Italy, Email: \texttt{l.gosse@ba.iac.cnr.it} },
N. Vauchelet\thanks{Universit\'e Paris 13, Sorbonne Paris Cit\'e, CNRS UMR 7539, 
Laboratoire Analyse G\'eom\'etrie et Applications,
93430 Villetaneuse, France.  Email: \texttt{vauchelet@math.univ-paris13.fr}}}

\maketitle

\begin{abstract}
This paper is concerned with diffusive approximations of peculiar numerical schemes for several linear (or weakly nonlinear) kinetic models which are motivated by wide-range applications, including radiative transfer or neutron transport, run-and-tumble models of chemotaxis dynamics, and Vlasov-Fokker-Planck plasma modeling. The well-balanced method applied to such kinetic equations leads to time-marching schemes involving a ``scattering $S$-matrix'', itself derived from a normal modes decomposition of the stationary solution. One common feature these models share is the type of diffusive approximation: their macroscopic densities solve drift-diffusion systems, for which a distinguished numerical scheme is Il'in/Scharfetter-Gummel's ``exponential fitting'' discretization. We prove that the well-balanced schemes relax, within a parabolic rescaling, towards the Il'in exponential-fitting discretization by means of an appropriate decomposition of the $S$-matrix.
This is the so-called asymptotic preserving (or uniformly accurate) property.
\end{abstract}

\bs
{\bf Keywords: } Discrete-Ordinates (DO); Diffusive approximation; Exponential fitting scheme; Kinetic well-balanced scheme; Run-and-Tumble; Uniformly accurate scheme; Vlasov-Fokker-Planck.

{\bf 2010 AMS subject classifications: } 65M06, 34D15, 76M45, 76R50.
\bs

\section{Introduction and contextualization}

\subsection{General setup}

Drift-diffusion equations, like (\ref{DDIntro}), arise naturally as diffusive approximations of numerous kinetic equations when time $t$ and space $x$ variables are conveniently rescaled. Such parabolic equations, in a context of semiconductor modeling, sparkled the development of so--called ``uniformly accurate'' (nowadays rephrased ``asymptotic-preserving'' (AP)) numerical methods: see e.g.  \cite{dorf,allen,gartland,IL,roos1,SG}. A thorough survey of such algorithms is presented in the book \cite{roos-book}. 
The ``exponential-fitting'' Il'in/Scharfetter-Gummel algorithm realizes one of the first well-balanced (WB) schemes for a parabolic equation in divergence form, like drift-diffusion equations, and is uniformly accurate (or AP), too, in the vanishing viscosity limit (contrary to the more standard Crank-Nicolson method).
Recently well-balanced numerical methods have been proposed in \cite[Part II]{book} to discretize kinetic equations.
The formulation of these schemes involves so-called scattering $S$-matrices built on exponential ``Knudsen layers''.
Accordingly, it is quite natural to wonder how this approach may be adapted to build well-balanced numerical methods which
\begin{enumerate}
\item are ``uniformly accurate'' (or AP) within a diffusive rescaling of variables~?
\item lead asymptotically to an ``exponential-fitting'' discretization ?
\end{enumerate}
In this paper, we intend to give a positive answer to both these questions for three examples of kinetic equations for which an explicit form of the $S$-matrix is known. Let $f(t,x,v)$ be a distribution function, depending on time $t>0$, position $x\in\RR$, and velocity $v\in V$: we shall consider,
\begin{itemize}
\item a first kinetic model, in parabolic scaling, which reads
\begin{equation}\label{eq1:kinIntro}
\eps \pa_t f^\eps + v\pa_x f^\eps = \frac{1}{\eps}\left(\int_{-1}^1 T_\eps(t,x,v')f^\eps(t,x,v')\,\frac{dv'}{2} - T_\eps(t,x,v)f^\eps(t,x,v)\right).
\end{equation}
When $T_\eps \equiv 1$, the well-known conservative radiative transfer equation is recovered,  
which, as $\eps\to 0$, approaches the heat equation.
When modeling chemotactic motions of bacteria, equation \eqref{eq1:kinIntro} is the so-called Othmer-Alt model \cite{othill}. The tumbling rate $T_\eps$ describes the response to variations of chemical concentration along a bacteria's path.
When the parameter $\eps\to 0$, it is now well-established that the macroscopic density $\rho:=\int_{-1}^1 f(v)\,dv$ solves the Keller-Segel system \cite{cmps}.
\item a related model, the Vlasov-Fokker-Planck system, for which the integral collision term is reduced to a diffusion operator. It reads, in parabolic scaling,
\begin{equation}\label{eq2:kinIntro}
\eps \pa_tf^\eps+v\pa_xf^\eps + E \cdot \pa_v f^\eps =
\frac{1}{\eps} \pa_v\left (vf^\eps + \kappa \pa_v f^\eps\right).
\end{equation}
It converges, as $\eps\to 0$ towards the drift-diffusion equation,  \cite{NPS,PS,WO1}.
\end{itemize}

\subsection{Scope and plan of the paper}

An object lying at the center of our matters is the so--called ``exponential-fit'' (Il'in \cite{IL}, Scharfetter-Gummel \cite{SG}, or Chang-Cooper \cite{chang}) numerical scheme for 1D drift-diffusion equations, that we briefly recall now. Consider,
\begin{equation}\label{DDIntro}
\pa_t \rho - \pa_x(\DD \pa_x \rho - E \rho)=0. \qquad 0 \leq \DD,\quad E \in \RR.
\end{equation}
This equation is discretized by means of a conservative numerical flux:
\begin{equation}\label{SGIntro}
\frac{\rho_j^{n+1}-\rho^n_j}{\Delta t}- \frac{F^n_{j+\frac 1 2}-F^n_{j-\frac 1 2}}{\Delta x}=0,
\end{equation}
where $\rho_j^n$ is an approximation of $\rho(t^n,x_j)$.
The flux $F^n_{j-\frac 1 2}$ is an approximation of $J:=\DD \pa_x \rho - E \rho$ at each interface of the grid $x_{j-\frac 1 2}$, which is derived by taking advantage of stationary solutions.
$$
J=\DD \pa_x \bar \rho(x) - E\,\bar \rho(x), \qquad \bar \rho(0)=\rho^n_{j-1}, \quad \bar \rho(\Delta x)=\rho^n_j.
$$
Then, elementary calculations lead to:
\begin{equation}\label{FluxSGIntro}
F^n_{j-\frac 1 2}:=J=E\,\frac{\rho^n_{j-1}-\exp(-E\,\Delta x / \DD)\,\rho^n_{j}}{1-\exp(-E\,\Delta x / \DD)}
\end{equation}
Scheme \eqref{SGIntro}--\eqref{FluxSGIntro} constitutes the classical Il'in/Sharfetter Gummel 
scheme \cite{gartland,IL,roos1,SG}. Our main result may be formulated as:
\begin{theorem}\label{main-thm}
Let (\ref{eqgen:scheme})--(\ref{eqgen:scat}) be a numerical scheme relying on a $S$-matrix for any of the linear $1+1$ kinetic equations (\ref{eq1:kinIntro}) and (\ref{eq2:kinIntro}) in parabolic scaling. Then, for $0 \leq \eps \ll 1$, and uniformly in $\Delta x>0$,
\begin{itemize}
\item each one, among the three considered $S$-matrices acting in (\ref{eqgen:scheme})--(\ref{eqgen:scat}), admits a decomposition of the type (\ref{decompS}); such a decomposition yields a Well-Balanced/Asymptotic-Preserving IMEX scheme like (\ref{schemeAPWB}); 
\item when $\eps\to 0$ in the numerical scheme, the corresponding macroscopic densities satisfy the Il'In/Sharfetter-Gummel scheme \eqref{SGIntro}--\eqref{FluxSGIntro}.
 \end{itemize}
\end{theorem}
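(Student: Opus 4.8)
The plan is to establish both bullet points by working cell by cell and reducing everything to explicit algebraic properties of the scattering matrix. First I would recall the structure of the $S$-matrix scheme (\ref{eqgen:scheme})--(\ref{eqgen:scat}): on each cell $[x_{j-\frac12},x_{j+\frac12}]$ one freezes the collision data at time $t^n$, solves the resulting \emph{stationary} discrete-ordinates transport equation, and expresses its outgoing interface traces through the incoming ones by a matrix $\calS_j$ obtained from the normal-mode (Knudsen-layer) decomposition of that stationary solution. For each of the three collision mechanisms — isotropic scattering with $T_\eps\equiv1$ (radiative transfer/heat), the Othmer--Alt tumbling kernel with $T_\eps=1+O(\eps)$ (run-and-tumble/Keller--Segel), and the Fokker--Planck operator $\pa_v(v\cdot+\kappa\pa_v)$ of (\ref{eq2:kinIntro})\,—\,the matrix $\calS_j$ is known explicitly from the preceding sections, its entries being built from exponentials $e^{\pm\mu\Delta x}$ of the admissible spatial rates $\mu$. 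The decomposition (\ref{decompS}) is then obtained by splitting $\calS_j$ into a pure transport/absorption part, which merely transfers traces between the two interfaces of the cell, and a collision-source part which is local in the cell; feeding this splitting into the time-marching step produces exactly the IMEX scheme (\ref{schemeAPWB}), where the boundary transfer is solved implicitly — this is what absorbs the stiff $1/\eps$ collision, already integrated over the cell by the stationary solve — while the slow (possibly $\rho$-dependent) drift is kept explicit.

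The heart of the matter is the $\eps\to0$ limit. The key observation is that, in the parabolic scaling, the stationary solution on a cell of \emph{fixed} width $\Delta x$ carries two well-separated spatial scales: fast Knudsen boundary layers, whose rates behave like $\mu_{\rm fast}\sim\pm1/\eps$, so that $e^{-\mu_{\rm fast}\Delta x}\to0$; and a slow macroscopic mode whose rate converges to the drift–diffusion ratio $E/\DD$ (with $\DD$ the limiting diffusion coefficient of the corresponding equation of type (\ref{DDIntro}), and $\DD=\kappa$ for (\ref{eq2:kinIntro})). I would insert these $\eps$-expansions into the explicit entries of $\calS_j$ and show that every term proportional to a fast exponential drops out of the flux balance across an interface $x_{j-\frac12}$, while the terms surviving the limit reorganize — this is the elementary but central algebraic step — precisely into $F^n_{j-\frac12}=E\,(\rho^n_{j-1}-e^{-E\Delta x/\DD}\rho^n_{j})/(1-e^{-E\Delta x/\DD})$, that is (\ref{FluxSGIntro}). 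For $T_\eps\equiv1$ one has $E=0$ and this degenerates to the centred diffusion flux $\DD(\rho^n_j-\rho^n_{j-1})/\Delta x$ (the $E\to0$ limit of (\ref{FluxSGIntro})); for the tumbling and the Vlasov--Fokker--Planck cases one must first read off the effective field $E$ from, respectively, the $O(\eps)$ correction of $T_\eps$ and the field appearing in (\ref{eq2:kinIntro}), and then run the same collapse. Combined with the conservative form already built into (\ref{eqgen:scheme}), this yields (\ref{SGIntro})--(\ref{FluxSGIntro}) in the limit.

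Uniformity in $\Delta x>0$ then comes essentially for free from the well-balanced construction: $\calS_j$ is exact on stationary states at any mesh size, so one only needs to check that the discarded contributions are $O(e^{-c\Delta x/\eps})$ with $c>0$ — hence uniformly negligible as soon as $\eps\ll\Delta x$ — and that $\calS_j(\eps,\Delta x)$ possesses a genuine limit as $\eps\to0$ for each fixed $\Delta x$. The step I expect to be the main obstacle is the Vlasov--Fokker--Planck model: there the collision operator is a diffusion in $v$, so the normal modes are not plain velocity exponentials but Hermite-type profiles, the dispersion relation is transcendental, and one has to work with a truncated (discrete-velocity or Hermite) model and carefully isolate the single eigenpair whose spatial rate tends to $E/\kappa$ while all others remain fast; verifying that the $S$-matrix of this truncation still displays the clean two-scale splitting, with a truncation error controlled uniformly in $\eps$ and $\Delta x$, is the delicate point. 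A secondary difficulty, specific to the chemotaxis model, is the nonlinear coupling through the chemoattractant: one must check that freezing $T_\eps$ at $t^n$ — the explicit part of the IMEX — is consistent with both the well-balanced property and the diffusive limit, i.e. that the limiting flux is (\ref{FluxSGIntro}) with $E$ evaluated from the time-$t^n$ data, so that the scheme reproduces the Keller--Segel drift–diffusion dynamics.
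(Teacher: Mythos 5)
Your treatment of the first bullet matches the paper's: each $\calS^\eps$ comes from matching normal modes across the cell, and the splitting (\ref{decompS}) with an anti-diagonal leading block feeds into the IMEX form (\ref{schemeAPWB}). The gap is in the passage to the diffusive limit. You claim that the fast Knudsen exponentials $e^{-\lambda\Delta x/\eps}$ vanish and that ``the terms surviving the limit reorganize'' directly into the flux (\ref{FluxSGIntro}). For $K>1$ discrete velocities this is not the operative mechanism, for two reasons. First, a damped mode carries the factor $e^{-\lambda\Delta x/\eps}$ only at the \emph{far} interface of the cell; at the near interface it survives with $O(1)$ amplitude. This is precisely the $\zeta\gamma$ correction in the leading term $\calS^0$, whose off-diagonal blocks are $\mathbf{I}_K-\zeta\gamma$ and not $\mathbf{I}_K$: the Knudsen layers do \emph{not} drop out of the $S$-matrix, they drop out of the \emph{macroscopic current} only because of the zero-current orthogonality $\sum_k\omega_k v_k\bigl(\phi_\lambda(v_k)-\phi_\lambda(-v_k)\bigr)=0$, i.e.\ (\ref{ortho0discret}) and, for Vlasov--Fokker--Planck, the assumption (\ref{eq:ortho-psi-dis}). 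You never invoke this. Second, the limiting flux must be a function of $\rho_{j-1},\rho_j$ alone, whereas $\calS^\eps$ acts on the full vectors $f_{j-1}(\calV)$, $f_j(-\calV)$; the closure $f^0_j\propto\rho_j\times(\mbox{Maxwellian})$ is not automatic but is the statement that $\mathrm{Ker}(\calH_0)$ is exactly the line of Maxwellians. In the paper this rests on the Haar property of the Case eigenfunctions (Proposition \ref{prop-zeta-gamma}, Lemmas \ref{Hzero} and \ref{Hzero-vfp}) and, for (\ref{eq2:kinIntro}), on the extra hypotheses (\ref{hypV1})--(\ref{hypV2}) precisely because exponential polynomials fail to be a $T$-system on $(0,+\infty)$ --- you flag the Hermite modes as delicate but do not identify this as the actual obstruction.

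Relatedly, the macroscopic update $\rho^{n+1}_j=\rho^n_j+\dots$ is not obtained by ``combining with the conservative form'': in the paper it arises as the \emph{solvability condition} for the order-$\eps$ equation $\calH_0\{f^1\}=\mathrm{RHS}$, namely that the right-hand side lie in $\mathrm{Im}(\calH_0)$ (zero total moment), since $\rho^{n+1}_j$ enters the right-hand side through $\{f^0\}^{n+1}_j$. Without the Hilbert expansion and the kernel/range (discrete Fredholm) analysis of $\calH_0$, your argument is really only the two-stream computation of \S\ref{sec:2}, where everything is scalar and explicit, and it does not extend as written. Finally, the correct diffusion coefficient ($1/3$, resp.\ $\kappa$) is not free either: it requires the quadrature conditions $\sum_k\omega_k v_k^2=1/3$ in (\ref{eq:restrict}) and $\sigma_2=\kappa\sigma_0$ in Theorem \ref{th:VFP}.
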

Obviously, such a general statement contains several former ones, among which the two-stream Goldstein-Taylor model relaxing to the heat equation, \cite{GT}, or the so--called ``Cattaneo model of chemotaxis'', \cite{jmaa0}; some of these results were surveyed in \cite[Part II]{book}.
Yet, as a guideline for more involved calculations, we first explain in \S\ref{sec:2} how the limiting process works for a simple two-stream approximation of (\ref{eq1:kinIntro}), the so--called ``Greenberg-Alt'' model of chemotaxis \cite{ga}.
We mention that for this simple two-velocity model, a numerical scheme formulated in terms of (\ref{eqgen:scheme}) with $2 \times 2$ $S$-matrices is provided in both \cite[page 158]{book} and \cite[Lemma 4.1]{sisc} (for the purpose of hydrodynamic limits, though). Another type of closely related ``diffusive limit'' involving a $2 \times 2$ $S$-matrix was studied in \cite{BIT}.

This elementary calculation carried out on the two-stream ``Greenberg-Alt'' model (\ref{eq:2vit}) reveals why it is rather natural to expect that a well-balanced algorithm (\ref{eq:scheme}) (based on stationary solutions) may relax, within a parabolic scaling, toward the exponential-fit scheme (\ref{SG1-KS})--(\ref{SG2-KS}) for the corresponding asymptotic Keller-Segel model. However, as our Theorem \ref{main-thm} covers also conti\-nuous-velocity models discretized with general quadrature rules, we present in \S\ref{sec:3} our strategy of proof: in particular, the general scheme involving a scattering matrix is presented in (\ref{eqgen:scheme}) and the importance of the decomposition of the scattering matrix (\ref{decompS}) is emphasized. In \S4, such a strategy is applied to the simplest case of continuous equation, namely the ``grey radiative transfer'' model (\ref{eq:tr}). For this system, it is shown in Theorem \ref{th:RTE} that our numerical scheme  relaxes to the finite-difference discretization of the heat equation (\ref{rho-FD}). In \S\ref{sec:5}, the case of the Othmer-Alt \cite{othill} model of chemotaxis dynamics (\ref{eq:kinchemo})--(\ref{eq:Teps}) is handled in a similar manner (at the price of more intricate computations, though), yielding asymptotically the scheme (\ref{SGchemo}), this is Theorem \ref{th:chemo}. At last, in \S\ref{sec:6}, the case of a Vlasov-Fokker-Planck model (\ref{VFP}) is studied, and its asymptotic convergence towards (\ref{SGvfp}) is studied. 

An essential difference between Vlasov-Fokker-Planck kinetic models and the ones involving an integral collision operator (\ref{eq:tr}), (\ref{eq:kinchemo}) is that, being exponential polynomials, stationary solutions of  (\ref{VFP}) may not constitute Chebyshev $T$-systems on $v \in (0,+\infty)$; definition of Chebyshev $T$-systems is recalled in \S\ref{sec:3} below.
This drawback has to be compensated by supplementary assumptions on the set of discrete velocities, like (\ref{hypV1}) and (\ref{hypV2}).
Accordingly, general properties of eigenfunctions for each of the stationary kinetic models are stated in Appendix, along with a new result on exponential monomials, see \cite{Krat}.
%
\begin{remark}[Notations]
When $u\in \RR^N$ and $v\in \RR^M$, the matrix $u\otimes v$ is an element of
$\calM_{N\times M}$ whose coefficients are $(u_k v_\ell)$.
We will also commonly use the abuse of notations 
$$
\frac{1}{1+u\otimes v} \in \calM_{N\times M}, \mbox{ with coefficients } \left(\frac{1}{1+u_k v_\ell}\right)_{k,\ell}.
$$
\end{remark}
The present work somehow completes the former ones \cite{AML,sisc,jmaa} where hydrodynamic limits, involving finite-time concentrations, were considered; hereafter, diffusive limits yielding smooth solutions are studied, and general conclusions are identical: the schemes proposed in \cite[Part II]{book}, involving $S$-matrices based on stationary solutions, yield more accurate discretizations of the asymptotic regime. Namely, the upwind scheme for hydrodynamic limit (instead of Lax-Friedrichs, \cite{JVnum}), and exponential-fitting in the diffusive one.

\section{A general strategy for proving Main Theorem \ref{main-thm}}\label{sec:3}

\subsection{Haar property, Chebyshev $T$-systems and Markov systems}

We first recall basic notions from standard (one-dimensional) approximation theory, following mostly \cite[Chapter 3]{cheney}.
\begin{definition}\label{haar-def}
Let $n \in \NN$ and $F_n=(f_1, f_2, ..., f_n)$ be a family of functions, continuous on an interval $I \subset \RR$: it is endowed with the \underline{Haar property} if, for any strictly increasing family $X=(x_1, x_2, ..., x_n) \in I^n$, the  family  of $n$ vectors $(f_1(X), f_2(X), ..., f_n(X))$ is linearly independent. Equivalently, the determinant never vanishes:
$\forall\,(x_1, x_2, \ldots , x_n) \in I^n,\ x_1<x_2<\ldots<x_n$,
\begin{equation}\label{haar-det}
\left|\begin{array}{cccc}
f_1(x_1) & f_2(x_1)& \cdots & f_n(x_1) \\
f_1(x_2) & f_2(x_2)& \cdots & f_n(x_2) \\
\vdots & \vdots & \ddots & \vdots \\
f_1(x_n) & f_2(x_n)& \cdots & f_n(x_n) \\
\end{array}\right|
\not = 0.
\end{equation}
Such a family $F_n$ constitutes a \underline{Chebyshev $T$-system} on the interval $I$.
\end{definition}
The simplest example of $T$-system on $I=\RR$ is the monomials family, for which the determinant (\ref{haar-det}) is the well-known {\it Vandermonde} determinant.
\begin{definition}\label{markov-def}
Let $F=(f_1, f_2, ...)$ be an infinite sequence of  functions, continuous on an interval $I \subset \RR$: it is said to be a  \underline{Markov system} if, for any $n \in \NN$, the extracted finite family $F_n \subset F$ is a Chebyshev $T$-system.
\end{definition}
A standard, yet important result is the following:
\begin{proposition}\label{prop1:Haar}
Let $n \in \NN$ and $F_n=(f_1, f_2, ..., f_n)$ be as in Definition \ref{haar-def}: it is a $T$-system on $I$ if and only if any (real) linear combination,
\begin{equation}\label{T-roots}
\forall (a_1, a_2, \cdots, a_n) \in \RR^n, \qquad I \ni x \mapsto \sum_{i=1}^n a_i\, f_i(x),
\end{equation}
admits at most $n-1$ real roots on $I$.
\end{proposition}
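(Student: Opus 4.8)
\textbf{Proof strategy for Proposition \ref{prop1:Haar}.}

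The plan is to prove the contrapositive in both directions, exploiting the equivalence between the non-vanishing of the determinant (\ref{haar-det}) and the solvability of the associated interpolation problem. First I would establish the ``only if'' direction. Suppose $F_n$ is a $T$-system on $I$ but some nonzero linear combination $g(x) = \sum_{i=1}^n a_i f_i(x)$ has $n$ distinct roots $x_1 < x_2 < \cdots < x_n$ in $I$. Writing this out at the $n$ points gives a homogeneous linear system whose coefficient matrix is exactly the matrix in (\ref{haar-det}); since $(a_1,\dots,a_n) \neq 0$ is a nontrivial solution, the determinant vanishes, contradicting the Haar property. Hence no such $g$ can have $n$ roots, i.e. every nonzero linear combination has at most $n-1$ roots on $I$.

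For the ``if'' direction, I would argue contrapositively as well: assume $F_n$ is \emph{not} a $T$-system, so there exist points $x_1 < x_2 < \cdots < x_n$ in $I$ for which the determinant (\ref{haar-det}) vanishes. Then the columns of that matrix are linearly dependent, which yields a nonzero vector $(a_1,\dots,a_n) \in \RR^n$ with $\sum_{i=1}^n a_i f_i(x_j) = 0$ for all $j = 1,\dots,n$. The function $g = \sum_i a_i f_i$ is then a nonzero linear combination (nonzero because the $f_i$ are linearly independent as functions — if they were dependent the statement is vacuous, or one reduces $n$) possessing $n$ distinct roots on $I$, contradicting the hypothesis that every such combination has at most $n-1$ roots. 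This completes the equivalence.

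The only genuinely delicate point is the claim, used in the ``if'' direction, that a nonzero coefficient vector produces a function $g$ that is genuinely nonzero (not identically zero). This holds provided $(f_1,\dots,f_n)$ are linearly independent as elements of $C(I)$; one should note that the Haar property trivially forces this (evaluate at any $n$ points), and conversely the Proposition is typically stated under the standing assumption of linear independence, so there is no real obstacle — it is a matter of recording the hypothesis cleanly. Everything else is linear algebra: the determinant of a square matrix vanishes if and only if its columns (equivalently rows) are linearly dependent, applied pointwise to evaluation vectors. No continuity or topology beyond membership in $C(I)$ is needed for the algebraic equivalence itself; continuity matters only for the surrounding theory (e.g. sign-change arguments), not for this particular statement.
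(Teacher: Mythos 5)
Your proof is correct, and it is the standard linear-algebra equivalence (a nontrivial solution of the homogeneous system $\sum_i a_i f_i(x_j)=0$, $j=1,\dots,n$, exists if and only if the alternant determinant \fref{haar-det} vanishes); the paper itself offers no proof of Proposition \ref{prop1:Haar}, quoting it as a standard fact from approximation theory with a pointer to \cite[Chapter 3]{cheney}, so there is nothing to diverge from. The only caveat, which you already flag, is that \fref{T-roots} must be read as ranging over \emph{nontrivial} coefficient vectors $(a_1,\dots,a_n)\neq 0$, and that under the root-bound hypothesis the $f_i$ are automatically linearly independent (a nontrivial dependence would produce the zero function, which has infinitely many roots on the interval $I$), so the function $g$ you construct in the converse direction is indeed admissible.
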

\begin{remark}\label{rem-hadamard}
Determinants of the type (\ref{haar-det}) are called ``alternant determinants'', see \cite[Chapter 4]{vein}. Moreover, the Haar property is closely related to ``total positivity'' of 
matrices, see {\it e.g.} \cite{bojanov}. Being the ``Hadamard product'' the component-wise product of two $n \times n$ matrices,
$$
(A \circ B)_{1 \leq i,j, n}:= A_{i,j}\, B_{i,j},
$$
Garloff and Wagner, in \cite[page 100]{bojanov}, explain that the Haar property is not generally preserved by multiplying elements of two $T$-systems with each other. A first exception is given by two generalized Vandermonde matrices $G_n=(x_i^{\alpha_j})_{1 \leq i,j, n}$ sharing either the set of points $X$ or the exponents $\alpha_i$'s. A second one is given by non-negative exponential monomials: see Theorem \ref{th:alternant}.
\end{remark}

\subsection{General strategy for building AP and WB schemes}
\label{sec:strategy}

Let us first give a Discrete-Ordinates (DO) setup.
A spatial domain is gridded with nodes $x_j=x_0+ j \Delta x$, $j \in \ZZ$ 
and a velocity domain, $V$, symmetric with respect to $0$, by
$v_k$, $k=-K,\ldots,-1,1,\ldots,K$, $0<v_1<v_2<\ldots<v_K$, and $v_{-k}=-v_k$.
We denote
$$
\calV=(v_1,\ldots,v_{K})^\top\in \RR^K, \qquad \VV:=\mbox{diag}(v_1,\ldots,v_{K},v_1,\ldots,v_{K})\in \calM_{2K}(\RR).
$$
Corresponding weights $(\omega_k)_{k=1,\ldots,K}$ may be given by a Gauss quadrature,
so 
$$
\int_V \phi(v)dv \mbox{ is approximated by } \sum_{k=1}^{K} \omega_k(\phi(v_k)+\phi(-v_k)).
$$
The time step will be denoted $\Delta t >0$.

We consider a kinetic system in parabolic scaling
$$
\eps \pa_t f + v \pa_x f = \frac{1}{\eps} \calL(f),\qquad t,x,v, \in \RR_*^+ \times \RR \times V,
$$
where $\calL$ is a linear operator, depending on the nature of the given problem.
We assume that when $\eps\to 0$ the macroscopic density, defined by $\rho:=\int_V f(v)\,dv$,
converges to a solution of a drift-diffusion kind of equation \eqref{DDIntro}.

Let a discretization of the distribution function 
at time $t^n$, be $(f^n_j(\pm v_k))_{j,k}$.
Our general strategy for building AP and WB schemes follows these steps.
\begin{itemize}
\item {\bf 1st step. Determination of the $S$-matrix.}
In order to build a well-balanced scheme, i.e. which preserves equilibria,
it is important to be able to compute stationary solutions.
Let us consider the following stationary problem
with incoming boundary conditions on $(0,\Delta x)$ for each $j=1,\ldots,N_x$,
\begin{equation}\label{eqstat}
\eps v \pa_x \barf = \calL_{j-\frac 12}(\barf) , \quad 
\barf(0,\calV) = f_{j-1}(\calV), \quad \barf(\Delta x,-\calV) = f_j(-\calV).
\end{equation}
In this system $\calL_{j-\frac 12}$ is a discretization of $\calL$ on $(x_{j-1},x_j)$
such that $\calL_{j-\frac 12}$ is a linear operator. 
Notice that it is enough to solve the problem 
with $\eps=1$ thanks to the change of variable 
$x\rightarrow x/\eps$.
The unknwon of the problem is the function $\barf$ and we want 
to determine the outgoing flux $\begin{pmatrix} \barf(\Delta x,\calV) \\ \barf(0,-\calV)  \end{pmatrix}$.
Since \eqref{eqstat} is linear, 
the computation of the outgoing flux involves a so-called scattering matrix
$\calS_{j-1/2}^\eps$ defined by
\begin{equation}\label{eqgen:scat}
\begin{pmatrix} \barf_{j-\frac 12}(\calV) \\ \barf_{j-\frac 12}(-\calV) \end{pmatrix} :=
\begin{pmatrix} \barf(\Delta x,\calV) \\ \barf(0,-\calV)  \end{pmatrix} =
\calS_{j-\frac 12}^\eps \begin{pmatrix} f_{j-1}(\calV) \\ f_j(-\calV) \end{pmatrix}.
\end{equation}
Several $S$-matrices for various kinetic models 
are provided in \cite[Part II]{book}, including the approach 
based on Case's elementary solutions \cite{ac}.
\item {\bf 2nd step. Well-balanced scheme.}
Once the scattering matrix is known, one may define the well-balanced scheme as (see \cite{mms})
\begin{align}\label{eqgen:scheme}
\begin{pmatrix} f_j^{n+1}(\calV) \\ f_{j-1}^{n+1}(-\calV) \end{pmatrix} &=
\begin{pmatrix} f_j^n(\calV) \\ f_{j-1}^n(-\calV) \end{pmatrix} 
- \frac{\Delta t}{\eps \Delta x}\VV
\begin{pmatrix} f_j(\calV) - \barf_{j-\frac 12}(\calV)  \\ 
f_{j-1}(-\calV)-\barf_{j-\frac 12}(-\calV) \end{pmatrix} 
\end{align}
It verifies the well-balanced property, {\it  i.e.} stationary states are preserved. 
\item {\bf 3rd step. Asymptotic preserving scheme.}
Obviously to have an uniformly accurate scheme and to be able to pass to the limit as $\eps\to 0$ into the scheme
\eqref{eqgen:scheme}, we need to treat implicitly terms in
$\frac{1}{\eps}$. To do so, a crucial step is the decomposition
\begin{equation}\label{decompS}\boxed{
\calS_{j-\frac 12}^\eps = \calS_{j-\frac 12}^0 + \eps \calS_{j-\frac 12}^{1,\eps}.}
\end{equation}
Finally, the scheme \eqref{eqgen:scheme}--\eqref{eqgen:scat} becomes
\begin{align}\label{schemeAPWB}
\begin{pmatrix} f_j^{n+1}(\calV) \\ f_{j-1}^{n+1}(-\calV) \end{pmatrix} 
&+ \frac{\Delta t}{\eps \Delta x}\VV\left[
\begin{pmatrix} f_j^{n+1}(\calV) \\ f_{j-1}^{n+1}(-\calV) \end{pmatrix}
- \calS_{j-\frac 12}^0 \begin{pmatrix} f_{j-1}^{n+1}(\calV) \\ f_{j}^{n+1}(-\calV) \end{pmatrix}
\right] \\
&= \begin{pmatrix} f_j^n(\calV) \\ f_{j-1}^n(-\calV) \end{pmatrix} 
 +\frac{\Delta t}{\Delta x} \VV \calS_{j-\frac 12}^{1,\eps} 
\begin{pmatrix} f_{j-1}^{n}(\calV) \\ f_{j}^{n}(-\calV) \end{pmatrix}. \nonumber
\end{align}
Recall that an approximation of macroscopic density is recovered thanks to the quadrature by
\begin{equation}\label{eq:rhof}
\forall j,n, \qquad \rho_j^n:=\sum_{k=1}^K \omega_k(f_j^n(v_k)+f_j^n(-v_k)).
\end{equation}
\end{itemize}

\subsection{Strategy for the proof of the main result}\label{sec:idea}

The aim of this paper is to prove that, at least for the kinetic systems \eqref{eq1:kinIntro}
and \eqref{eq2:kinIntro} for
which scattering matrices are well established, the limit as $\eps\to 0$ of scheme \eqref{schemeAPWB}
leads to the Il'In/Sharfetter Gummel scheme \eqref{SGIntro}--\eqref{FluxSGIntro} 
for the macroscopic density defined in \eqref{eq:rhof}.
A first ingredient in the proof will be to establish that actually the leading order term
in the decomposition \eqref{decompS} writes with $K\times K$ block matrices
$$
\calS^0_{j-\frac 12} = \begin{pmatrix} \mathbf{0}_K & \calS^{0}_{1,j-\frac 12} \\ \calS^{0}_{1,j-\frac 12} & \mathbf{0}_K
\end{pmatrix} \quad \mbox{ and } \quad
\calS^{1,\eps}_{j-\frac 12} = \begin{pmatrix} \calS^{1,\eps}_{1,j-\frac 12} & \calS^{1,\eps}_{2,j-\frac 12} \\
  \calS^{,\eps}_{3,j-\frac 12} & \calS^{1,\eps}_{4,j-\frac 12} \end{pmatrix}.
$$
As a consequence, scheme \eqref{schemeAPWB} can be recast in IMEX 
(IMplicit-EXplicit \cite{boparu,paru}) form,
\begin{equation}\label{IMEX}
\calH_\eps \begin{pmatrix} f_j^{n+1}(\calV) \\ f_j^{n+1}(-\calV) \end{pmatrix}
= \begin{pmatrix}\eps  f_j^{n}(\calV) \\\eps  f_j^{n}(-\calV) \end{pmatrix}
+ \frac{\eps \Delta t}{\Delta x} \VV 
\begin{pmatrix} S_{1,j-\frac 12}^{1,\eps} f_{j-1}^n(\calV)+S_{2,j-\frac 12}^{1,\eps} f_j^n(-\calV) \\
 S_{3,j+\frac 12}^{1,\eps} f_{j}^n(\calV)+S_{4,j+\frac 12}^{1,\eps} f_{j+1}^n(-\calV) \end{pmatrix},
\end{equation}
for a matrix $\calH_\eps$ (which details depend on each kinetic model) given by,
$$
\calH_\eps = \eps {\mathbf I}_{\RR^{2K}} + \frac{\Delta t}{\Delta x} \VV
\begin{pmatrix} \mathbf{I}_K & -\calS^{0}_{1,j-\frac 12} \\ -\calS^{0}_{1,j+\frac 12} & \mathbf{I}_K
\end{pmatrix}
\underset{\eps\to 0}{\longrightarrow} \calH_0:=
\frac{\Delta t}{\Delta x} \VV
\begin{pmatrix} \mathbf{I}_K & -\calS^{0}_{1,j-\frac 12} \\ -\calS^{0}_{1,j+\frac 12} & \mathbf{I}_K
\end{pmatrix}.
$$ 
To ensure global solvability of such a scheme in the diffusive limit, we shall argue according to the analysis performed for continuous equations, see for instance \cite[Chap. 5]{ansgar}, namely by proving that:
\begin{itemize}
\item Ker($\calH_0$) is a vectorial line, closely related to Maxwellian distributions;
\item its range is an hyperplane which contains all kinetic distributions with vanishing macroscopic densities ({\it i.e.} null moments of order zero).
\end{itemize}
These conditions are discrete analogues of the Fredholm alternative holding for continuous limits. 
Yet, we write that kinetic densities have a Hilbert expansion, $f=f^0+\eps f^1 + \ldots$.
Injecting into \eqref{IMEX}, we first deduce by identifying the term at order 0 in $\eps$ 
that $f^0\in \mbox{Ker}(\calH_0)$. Identifying the terms
at order 1 in $\eps$, we get
\begin{align}
\calH_0 \begin{pmatrix} \{f^1\}_j^{n+1}(\calV) \\[1mm] \{f^1\}_j^{n+1}(\calV) \end{pmatrix} = 
&\begin{pmatrix} (\{f^0\}_j^{n}-\{f^0\}_j^{n+1})(\calV) \\[1mm] (\{f^0\}_j^{n}-\{f^0\}_j^{n+1})(-\calV) \end{pmatrix}
\label{eqR0Intro}  \\[1mm]   
& + \frac{\Delta t}{\Delta x} \VV 
\begin{pmatrix} S_{1,j-\frac 12}^{1,0} \{f^0\}_{j-1}^n(\calV)+S_{2,j-\frac 12}^{1,0} \{f^0\}_j^n(-\calV) \\[1mm]
 S_{3,j+\frac 12}^{1,0} \{f^0\}_{j}^n(\calV)+S_{4,j+\frac 12}^{1,0} \{f^0\}_{j+1}^n(-\calV) \end{pmatrix}. \nonumber
\end{align}
This equation admits a solution iff the right hand side belongs to the range of $\calH_0$.
Taking moments of order zero allows us to deduce the asymptotic discretization governing the numerical macroscopic density.

To summarize, the main issues are: 
\begin{itemize}
\item to compute the scattering matrix $\calS$ and determine its decomposition \eqref{decompS};
\item to study the kernel and the range of $\calH_0$;
\item to establish that taking the moment of order zero of \eqref{eqR0Intro} leads to Il'In/Sharfetter Gummel scheme for macroscopic density.
\end{itemize}



Finally, for future use, we recall the following result:
\begin{lemma}[Lemma 3.1 in \cite{CGT}, Prop. 1 in \cite{mms}]\label{lem:CGT}
Let $\Gamma$ be the following diagonal matrix, 
$$\Gamma = \begin{pmatrix} \mbox{diag}(\omega_k v_k)_{k=1,\ldots,K} & 0_K \\ 0_K & \mbox{diag}(\omega_k v_k)_{k=1,\ldots,K} \end{pmatrix},
$$
then, being given a nonnegative initial data $f^0_j(\pm\calV)_j$, the scheme (\ref{eqgen:scheme}) 
preserves both non-negativity and the (discrete) $L^1$ norm of $f_j^n(\pm \calV)$ as soon as
\begin{align}
&\max(\VV) \Delta t \leq \eps \Delta x,  \quad \mbox{ CFL condition,}  \label{CFL}  \\
&\forall\, j,\quad \Gamma \calS_{j-1/2}^n \Gamma^{-1} \ \mbox{ is left-stochastic (each column summing to 1)}. \label{eq:massc}
\end{align}
Moreover, it preserves its $L^\infty$ norm as well if $\calS_{j-1/2}^n$ is right-stochastic (each row summing to 1).
\end{lemma}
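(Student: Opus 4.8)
\textbf{Proof plan for Lemma~\ref{lem:CGT}.}
The plan is to verify the two invariants — nonnegativity together with conservation of the discrete $L^1$ mass, and (under the stronger hypothesis) conservation of the discrete $L^\infty$ norm — directly from the update rule \eqref{eqgen:scheme}, exploiting the stochasticity hypotheses on the scattering matrix. First I would rewrite one step of \eqref{eqgen:scheme} in ``transport form'': grouping the interface fluxes, the update of $f_j^{n+1}(\calV)$ uses the outgoing flux $\barf_{j-\frac12}(\calV)$ at the left interface $x_{j-\frac12}$ and the incoming values $f_j^n(\calV)$, while the update of $f_{j-1}^{n+1}(-\calV)$ uses $\barf_{j-\frac12}(-\calV)$. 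Plugging in \eqref{eqgen:scat}, the new value of each component becomes an affine combination of old values $f_{j-1}^n(\calV)$, $f_j^n(\calV)$, $f_{j-1}^n(-\calV)$, $f_j^n(-\calV)$ with coefficients built from $\frac{\Delta t}{\eps\Delta x}\VV$ and the entries of $\calS_{j-\frac12}^n$. The point is to check that under \eqref{CFL} all these coefficients are nonnegative and that (after the suitable weighting) they sum to $1$ in the appropriate sense.

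For nonnegativity, the ``diagonal'' coefficient multiplying $f_j^n(\calV)$ in its own update is $1-\frac{v_k\Delta t}{\eps\Delta x}(1 - (\calS_{j-\frac12}^n)_{kk}^{\mathrm{diag}})$; since $\calS$ is left-stochastic after conjugation by $\Gamma$ its relevant diagonal entry lies in $[0,1]$, and \eqref{CFL} forces $\frac{v_k\Delta t}{\eps\Delta x}\le 1$, so this coefficient is $\ge 0$; all off-diagonal coefficients are manifestly nonnegative, being products of $\frac{v_k\Delta t}{\eps\Delta x}\ge 0$ with nonnegative entries of $\calS$. Hence the scheme maps nonnegative data to nonnegative data. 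For the $L^1$ bound one computes $\sum_{j}\Gamma$-weighted sums: writing the discrete mass as $\sum_{j,k}\omega_k v_k\,(\ldots)$ — or more precisely tracking the quantity preserved, $\sum_j \langle \mathbf 1, \Gamma (f_j^n(\calV),f_j^n(-\calV))^\top\rangle$ — the telescoping of interface fluxes together with condition \eqref{eq:massc} (each column of $\Gamma\calS_{j-1/2}^n\Gamma^{-1}$ sums to $1$) makes the net contribution of each interface vanish, so the weighted total is conserved from $t^n$ to $t^{n+1}$. Combined with nonnegativity this is exactly the discrete $L^1$ conservation. For the $L^\infty$ claim, one instead uses that right-stochasticity of $\calS_{j-1/2}^n$ makes each new component a genuine convex combination of old components (the coefficients, already shown nonnegative, now sum to $1$ row-wise), whence $\max |f_j^{n+1}| \le \max |f_j^n|$.

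The main obstacle, and the only nonroutine point, is bookkeeping the index structure: \eqref{eqgen:scheme} couples $f_j(\calV)$ at interface $j-\frac12$ but $f_j(-\calV)$ at interface $j+\frac12$, so the ``transport form'' and the telescoping must be set up carefully to make sure each cell $j$ collects exactly the fluxes from its two bounding interfaces with the correct signs; the conjugation by $\Gamma$ is precisely what converts the physically natural flux balance (which weights by $\omega_k v_k$) into the stochasticity statement \eqref{eq:massc}. Since this is a known result, quoted from \cite[Lemma~3.1]{CGT} and \cite[Prop.~1]{mms}, I would keep the argument brief, essentially recording the transport-form rewriting and then invoking the CFL bound and the (left/right) stochasticity to read off the three conclusions. $\qed$
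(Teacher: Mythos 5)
The paper does not actually prove this Lemma: it is recalled verbatim from \cite[Lemma 3.1]{CGT} and \cite[Prop. 1]{mms} and used as a black box, so there is no internal proof to compare against. Your outline is the standard argument and is essentially sound in structure (rewrite \eqref{eqgen:scheme} in convex-combination form, use the CFL bound for the sign of the coefficient of the cell's own old value, use entrywise nonnegativity of $\calS^n_{j-1/2}$ for the off-diagonal coefficients, and use column-stochasticity of $\Gamma\calS^n_{j-1/2}\Gamma^{-1}$ plus telescoping for mass, row-stochasticity for the maximum principle). Two details deserve correction, though. First, the coefficient of $f_j^n(v_k)$ in its own update is exactly $1-\tfrac{v_k\Delta t}{\eps\Delta x}$, not $1-\tfrac{v_k\Delta t}{\eps\Delta x}\bigl(1-(\calS^n_{j-1/2})_{kk}\bigr)$: the scattering matrix at interface $j-\tfrac12$ acts on the incoming pair $(f_{j-1}(\calV),\,f_j(-\calV))$, so $\barf_{j-1/2}(\calV)$ contains no contribution from $f_j(\calV)$ and there is no ``diagonal'' $\calS$-term to absorb. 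This is harmless here (your coefficient is only larger), but the formula as written does not match the scheme. Second, and more substantively, the conserved quantity is the $\omega_k$-weighted mass $\sum_{j,k}\omega_k\bigl(f_j^n(v_k)+f_j^n(-v_k)\bigr)$, \emph{not} the $\Gamma$-weighted sum $\sum_j\langle\mathbf 1,\Gamma(f_j^n(\calV),f_j^n(-\calV))^\top\rangle$ that you propose to track (the latter picks up flux terms weighted by $\omega_kv_k^2$ which \eqref{eq:massc} does not control, and is not conserved in general). The weights $\omega_kv_k$ belong to the interface fluxes: condition \eqref{eq:massc}, i.e. $\sum_m\omega_mv_m(\calS^n_{j-1/2})_{m\ell}=\omega_\ell v_\ell$ for each column $\ell$, says precisely that the total $\omega_kv_k$-weighted outgoing flux at an interface equals the total weighted incoming flux, after which the remaining term $\sum_j\sum_k\omega_kv_k\bigl(f_j^n(v_k)-f_{j-1}^n(v_k)\bigr)$ telescopes and the $\omega_k$-weighted mass is conserved. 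With these two points fixed, your plan goes through.
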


\section{The two-stream Greenberg-Alt kinetic model}\label{sec:2}

For the sake of simplicity,
we first start our exposition by a very simple model consisting in
a two-velocity kinetic model.

\subsection{Diffusive limit of the continuous system}

The simplest two-velocity kinetic model
describing the motion of bacteria by chemotaxis was given in \cite{ga}. 
Let $f^+$ ($f^-$) be the distribution function of right-moving (left-moving) 
bacteria, the following system describes their motion governed by a
{\it run and tumble} process (see also e.g. \cite{cmps,ds})
\begin{equation}\label{eq:2vit}
\eps \pa_t f^\pm \pm \pa_x f^\pm = \pm \frac{1}{2\eps}
\Big( \big( 1+\eps \phi(\pa_x S)) f^- - (1-\eps\phi(\pa_xS)) f^+\Big).
\end{equation}
The quantity $S$ is the chemoattractant concentration, and solves 
\begin{equation}\label{eq:S}
-\pa_{xx} S + S = \rho.
\end{equation}
Macroscopic quantities being $\rho=f^++f^-$ (density) and $J=\frac{1}{\eps}(f^+-f^-)$ (current),
adding and subtracting former equations yields, 
$$
\pa_t \rho + \pa_x J = 0, \qquad 
\eps^2 \pa_t J + \pa_x \rho = \phi(\pa_x S) \rho - J.
$$
Letting formally $\eps\to 0$ in the second equation,
$$
J = \phi(\pa_xS) \rho - \pa_x \rho,
$$
and yields the well-known Keller-Segel,
system when coupled with (\ref{eq:S}),
\begin{equation}\label{eq:KS}
\pa_t \rho - \pa_{xx} \rho + \pa_x \big(\phi(\pa_x S)\rho\big) = 0.
\end{equation}

\subsection{Exponential-fit scheme for Keller-Segel equation}

We first recall the Il'in/Sharfetter Gummel scheme for system \eqref{eq:KS},
which writes under the form \eqref{DDIntro} with $\DD=1$ and $E=\phi(\pa_xS)$.
Assuming that approximations $(\rho_j^n)_j$ of $\rho(t^n,x_j)$ and 
$(S_j^n)_j$ of $S(t^n,x_j)$ are available, 
we denote 
$$
\forall j, \qquad \phi_{j-1/2}^n = \phi\Big(\frac{S_{j}^n-S_{j-1}^n}{\Delta x}\Big).
$$
Then, the Il'In/Sharfetter Gummel scheme \eqref{SGIntro}--\eqref{FluxSGIntro} reads,
in this framework,
\begin{equation}\label{SG1-KS}
\bar{\calJ}_{j-1/2}^n = \frac{\phi_{j-1/2}^n}{1-e^{-\phi_{j-1/2}^n\Delta x}}
\big(\rho_{j-1}^n - e^{-\phi_{j-1/2}^n \Delta x} \rho_j^n\big),
\end{equation}
which gives (constant) currents. Then densities are updated by,
\begin{equation}\label{SG2-KS}
\rho_j^{n+1} = \rho_j^n + \frac{\Delta t}{\Delta x}
\big(\bar{\calJ}_{j-1/2}^n-\bar{\calJ}_{j+1/2}^n\big).
\end{equation}

\subsection{Asymptotic preserving and well-balanced scheme}

We follow the strategy proposed in \S\ref{sec:strategy}.
Let us assume that $(f_j^{n,+},f_j^{n,-})_{j}$ are known at time $t^n$ along 
with an approximation $S_j^n$ of $S(t^n,x_j)$, giving 
$\phi_{j-1/2}^n = \phi(\frac{S_{j}^n-S_{j-1}^n}{\Delta x})$.

\begin{itemize}
\item {\bf 1st step. Scattering matrix.}
It is computed by solving the stationary system in $(x_{j-1},x_j)$ 
with incoming boundary conditions,
\begin{equation}\label{eq:stat}
\left\{\begin{array}{l}
\pa_x \bar{f}^\pm = \ds \frac{1}{2\eps} \big((1+\eps\phi_{j-1/2}^n) \bar{f}^-
         -(1-\eps\phi_{j-1/2}^n) \bar{f}^+\big)   \\[2mm]
\bar{f}^+(x_{j-1}) = f^{+}_{j-1} \quad ; \qquad \bar{f}^-(x_j) = f_j^{-}.
\end{array}\right.
\end{equation}
The unknown for this problem are interface values $\bar{f}_{j-1/2}^+:=\bar{f}^+(x_j)$ and  
$\bar{f}_{j-1/2}^-:=\bar{f}^-(x_{j-1})$.
System \eqref{eq:stat} may be solved exactly by, first, subtracting both equations,
$$
\pa_x(\bar{f}^+ - \bar{f}^-) = 0, \qquad \bar{J}:=\frac{1}{\eps}(\bar{f}^+-\bar{f}^-)
\mbox{ is constant in } (x_{j-1},x_j).
$$
and then, adding them, so that by denoting $\bar{\rho}=\bar{f}^++\bar{f}^-$,
$$
\pa_x \bar{\rho} = \phi_{j-1/2}^n \bar{\rho} - \bar{J}, \qquad 
e^{-\phi_{j-1/2}^n \Delta x} \bar{\rho}_{j} - \bar{\rho}_{j-1} = \bar{J}
\frac{e^{-\phi\Delta x}-1}{\phi}.
$$
Thus the system to be solved is 
\begin{align*}
f^+_{j-1}-\bar{f}_{j-1/2}^- &= \bar{f}_{j-1/2}^+ - f_j^-  \\
e^{-\phi_{j-1/2}^n\Delta x}(f_j^-+\bar{f}_{j-1/2}^+)-(f_{j-1}^+ + \bar{f}_{j-1/2}^-)
&= \frac{e^{-\phi_{j-1/2}^n \Delta x}-1}{\eps \phi_{j-1/2}^n} 
(f_{j-1}^+ - \bar{f}_{j-1/2}^-).
\end{align*}
After easy computations,
\begin{align*}
\bar{f}_{j-1/2}^- = &f_{j-1}^+ + \frac{2 \eps \phi_{j-1/2}^n\big(f^+_{j-1} - e^{-\phi_{j-1/2}^n \Delta x} f_j^- \big)}{e^{-\phi_{j-1/2}^n\Delta x}-1
-\eps \phi_{j-1/2}^n(1+e^{- \phi_{j-1/2}^n \Delta x})} ,  \\
\bar{f}_{j-1/2}^+ = &f_{j}^- - \frac{2 \eps \phi_{j-1/2}^n\big(f^+_{j-1} - e^{-\phi_{j-1/2}^n \Delta x} f_j^- \big)}{e^{-\phi_{j-1/2}^n\Delta x}-1
-\eps \phi_{j-1/2}^n(1+e^{- \phi_{j-1/2}^n \Delta x})} .
\end{align*}
Following \cite[pp.157--58]{book} (or \cite{AML}), one may rewrites the latter system
by defining a $S$-matrix,
$$
\begin{pmatrix}
\bar{f}_{j-1/2}^+ \\ \bar{f}_{j-1/2}^-
\end{pmatrix}
=\calS^n_{j-1/2} \begin{pmatrix}
{f}_{j-1}^+ \\ {f}_{j}^-
\end{pmatrix},
$$
which (with shorthand notation $\calE= e^{-\phi_{j-1/2}^n \Delta x}$) reads
$$
\calS^n_{j-1/2}= \begin{pmatrix}
\frac{-2 \eps \phi_{j-1/2}^n}{\calE - 1-\eps \phi_{j-1/2}^n(1+\calE)} & 1+\frac{2 \eps \phi_{j-1/2}^n \, \calE}{\calE - 1-\eps \phi_{j-1/2}^n(1+\calE)} \\
 1+\frac{2 \eps \phi_{j-1/2}^n }{\calE - 1-\eps \phi_{j-1/2}^n(1+\calE)} & \frac{-2 \eps \phi_{j-1/2}^n \, \calE}{\calE - 1-\eps \phi_{j-1/2}^n(1+\calE)}
\end{pmatrix}.
$$
Notice that it is clearly left-stochastic.

\item{\bf 2nd step. WB scheme.}
Following \cite{GT}, we consider 
\begin{equation}\label{eq:scheme}
\left\{\begin{array}{l}
f_j^{+,n+1} = f_j^{+,n} - \frac{\Delta t}{\eps \Delta x} (f_j^{+,n+1} 
- \bar{f}_{j-1/2}^{+})   \\
f_{j-1}^{-,n+1} = f_{j-1}^{-,n} - \frac{\Delta t}{\eps \Delta x} (f_{j-1}^{-,n+1}
- \bar{f}_{j-1/2}^{-}).
\end{array}\right.
\end{equation}

\item{\bf 3rd step. AP scheme.}
For such a simple case, the decomposition (\ref{decompS}) is straightforward:
\begin{equation}\label{S-01}
\calS^n_{j-1/2}= \begin{pmatrix}
0 & 1 \\ 1 & 0 \end{pmatrix} + 2 \eps\cdot \frac{ \phi_{j-1/2}^n }{\calE - 1-\eps \phi_{j-1/2}^n(1+\calE)}
\begin{pmatrix}
-1 & \calE \\ 1 & -\calE \end{pmatrix}.
\end{equation}
By treating implicitly the first (stiff) term and plugging into the scheme (\ref{eqgen:scheme}),
\begin{equation}\label{eq:schem2}
\left\{\begin{array}{l}
f_j^{+,n+1} = f_j^{+,n} - \frac{\Delta t}{\eps \Delta x} (f_j^{+,n+1} 
- f_j^{-,n+1}) + \frac{\Delta t}{\Delta x} \bar{J}^n_{j-1/2}   \\[2mm]
f_{j}^{-,n+1} = f_{j}^{-,n} - \frac{\Delta t}{\eps \Delta x} (f_{j}^{-,n+1}
- f_{j}^{+,n+1}) - \frac{\Delta t}{\Delta x} \bar{J}^n_{j+1/2}  \\[2mm]
\bar{J}^n_{j-1/2} = \frac{ - 2 \phi_{j-1/2}^n\big(f^{n,+}_{j-1} - e^{-\phi_{j-1/2}^n \Delta x} f_j^{n,-} \big)}{e^{-\phi_{j-1/2}^n\Delta x}-1
-\eps \phi_{j-1/2}^n(1+e^{- \phi_{j-1/2}^n \Delta x})} . 
\end{array}\right.
\end{equation}
\end{itemize}
For this simple case, the limit $\eps\to 0$ may be performed easily.
Adding the first two equations in \eqref{eq:schem2}, we obtain,
\begin{equation}\label{eq:SG1}
\rho_j^n=f_j^{+,n}+f_j^{-,n}, \qquad 
\rho_j^{n+1} = \rho_j^{n} - \frac{\Delta t}{\Delta x}(\bar{J}^n_{j+1/2}-\bar{J}^n_{j-1/2}).
\end{equation}
Letting then $\eps\to 0$, we deduce easily from \eqref{eq:schem2} that 
$$
\bar{J}_{j-1/2}^n \to \frac{ - 2 \phi_{j-1/2}^n}{e^{-\phi_{j-1/2}^n\Delta x}-1} 
\big(f^{n,+}_{j-1} - e^{-\phi_{j-1/2}^n \Delta x} f_j^{n,-} \big).
$$
Yet, multiplying the first equations of \eqref{eq:schem2}
by $\eps$ and letting $\eps\to 0$, at the limit 
the relation $f_j^{+,n+1}=f_j^{-,n+1}$ is enforced.
Accordingly, the current rewrites
\begin{equation}\label{eq:SG2}
\bar{J}_{j-1/2}^n \to \frac{ - \phi_{j-1/2}^n}{e^{-\phi_{j-1/2}^n\Delta x}-1} 
\big(\rho^{n}_{j-1} - e^{-\phi_{j-1/2}^n \Delta x} \rho_j^{n} \big).
\end{equation}
Injecting  \eqref{eq:SG2} into \eqref{eq:SG1},
Il'in scheme for Keller-Segel system \eqref{eq:KS} is found.

\begin{remark}
All computations are explicit, so there is no real need of 
$\calH_\eps$ when passing to the limit. From \eqref{eq:schem2}, we get 
the matrix already met in \cite[\S3]{GT}, 
$$
\calH_\eps = \begin{pmatrix} \eps + \frac{\Delta t}{\Delta x} & -\frac{\Delta t}{\Delta x} \\ -\frac{\Delta t}{\Delta x} & \eps + \frac{\Delta t}{\Delta x}
\end{pmatrix}
\underset{\eps\to 0}{\longrightarrow} 
\calH_0 = \frac{\Delta t}{\Delta x}\begin{pmatrix} 1 & -1 \\ -1 & 1
\end{pmatrix}.
$$
Thus, Ker$(\calH_0)=$Vect$\{(1,1)^\top\}$ and $(u,v)^\top\in\mbox{Im}(\calH_0)$
iff $u+v=0$.
\end{remark}

\section{Heat equation as a diffusive limit of radiative transfer}\label{sec:4}

Let us consider the kinetic model \eqref{eq1:kinIntro} in the simple case $T_\eps\equiv 1$:
\begin{equation}\label{eq:tr}
\eps \pa_t f + v\pa_xf = \frac{1}{\eps}\left(\int_{-1}^1 f(t,x,v')\frac{dv'}{2} -f\right).
\end{equation}
The macroscopic density, at the limit $\eps\to 0$, satisfies the heat equation
\begin{equation}\label{hit_eq}
\pa_t \rho - \frac{1}{3} \pa_{xx} \rho = 0, \qquad \rho(t,x)=\int_{-1}^1 f(t,x,v)dv.
\end{equation}
Equation (\ref{eq:tr}) is usually referred to as to ``gray radiative transfer''; more specific models can be drawn by  replacing the uniform integral kernel $\frac 1 2$ by an even, nonnegative, function of the velocity variable, $0 \leq \mathcal{K}(v)=\mathcal{K}(-v)$,
$$
\eps \pa_t f + v\pa_xf = \frac{1}{\eps}\left({\mathcal{K}(v)} \, \int_{-1}^1 f(t,x,v'){dv'} -f\right), \qquad \int_{-1}^1 \mathcal{K}(v)=1.
$$

For this system, the velocity domain being $V=(-1,1)$,
we assume that the set $(\omega_k,v_k)$ introduced in \S\ref{sec:strategy} satisfies 
the mild restrictions,
\begin{equation}\label{eq:restrict}
\sum_{k=1}^K \omega_k = 1, \qquad  \sum_{k=1}^K \omega_k v_k^2 = \frac 1 3, \qquad v_{-k}=-v_k.
\end{equation}

\subsection{Solving the stationary problem}

Let us first investigate some useful properties on the stationary equation for general nonnegative kernel $T$:
\begin{proposition}\label{prop:lambda}
Let $\barf(x,v)=\exp(-\lambda x)\phi_\lambda(v)$ be a separated-variables solution of 
the stationary equation
\begin{equation}\label{eqstatdis}
v\, \pa_x \barf= \int_{-1}^1 T(v')\, \barf(x,v'){d\nu(v')} - T(v)\,\barf, \qquad v \in (-1,1),
\end{equation}
where $\nu$ is a probability measure on $(-1,1)$. 
Then, we have
\begin{equation}\label{eqphilam}
\phi_\lambda(v) = \frac{1}{T(v)-\lambda v}, \qquad 
\int_{-1}^1 \frac{T(v)}{T(v)-\lambda v} d\nu(v) = 1.
\end{equation}
Moreover, the following orthogonality relation holds
\begin{equation}\label{ortho}
\forall \lambda \not = \mu, \quad 
 \int_{-1}^1 v\,\phi_\lambda(v) \phi_\mu(v)\,  T(v)\, d\nu(v) = 0.
\end{equation}
In particular, for $\lambda=0$, 
$$
\phi_0(v)= \frac{1}{T(v)}, \qquad \forall \mu \not =0, \quad J_\mu=\int_{-1}^1 v\, \phi_\mu(v)\,d\nu(v) =0.
$$
\end{proposition}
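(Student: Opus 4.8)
The plan is to insert the separated ansatz $\barf(x,v)=e^{-\lambda x}\phi_\lambda(v)$ into \eqref{eqstatdis} and read off the three assertions one after another. Substituting, the factor $e^{-\lambda x}$ cancels on both sides; since $c_\lambda:=\int_{-1}^1 T(v')\phi_\lambda(v')\,d\nu(v')$ is a constant (independent of $x$ and $v$), the equation collapses to the pointwise algebraic identity $(T(v)-\lambda v)\,\phi_\lambda(v)=c_\lambda$. A nontrivial solution forces $c_\lambda\neq 0$, whence $\phi_\lambda(v)=c_\lambda/(T(v)-\lambda v)$; reinjecting this into the definition of $c_\lambda$ gives $c_\lambda=c_\lambda\int_{-1}^1\frac{T(v)}{T(v)-\lambda v}\,d\nu(v)$, so dividing by $c_\lambda$ yields the dispersion relation $\int_{-1}^1\frac{T(v)}{T(v)-\lambda v}\,d\nu(v)=1$. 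Normalizing the eigenfunction by $c_\lambda=1$ then gives $\phi_\lambda(v)=1/(T(v)-\lambda v)$, which is \eqref{eqphilam}; the case $\lambda=0$ is immediate and its dispersion relation is automatic because $\nu$ is a probability measure.

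For the vanishing-current statement, I would integrate the pointwise identity $(T(v)-\lambda v)\phi_\lambda(v)=1$ against $d\nu$. Using that $\nu$ is a probability measure on the right and the dispersion relation $\int T\phi_\lambda\,d\nu=1$ on the left, what remains is $1-\lambda\int_{-1}^1 v\,\phi_\lambda(v)\,d\nu(v)=1$, i.e. $\lambda J_\lambda=0$ with $J_\lambda:=\int v\phi_\lambda\,d\nu$. Hence $J_\mu=0$ for every $\mu\neq 0$, as claimed.

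For the orthogonality \eqref{ortho}, fix $\lambda\neq\mu$ and rewrite the pointwise identities as $T\phi_\lambda=1+\lambda v\phi_\lambda$ and $T\phi_\mu=1+\mu v\phi_\mu$. Multiplying the first by $v\phi_\mu$ and the second by $v\phi_\lambda$ produces two expressions for $v\,T\,\phi_\lambda\phi_\mu$; subtracting them gives $(\lambda-\mu)\,v^2\phi_\lambda\phi_\mu=v\phi_\lambda-v\phi_\mu$, so $v^2\phi_\lambda\phi_\mu=\frac{v\phi_\lambda-v\phi_\mu}{\lambda-\mu}$, and substituting this back into either expression yields the compact identity $v\,T\,\phi_\lambda\phi_\mu=\frac{\lambda\,v\phi_\lambda-\mu\,v\phi_\mu}{\lambda-\mu}$. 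Integrating against $d\nu$ and invoking $\lambda J_\lambda=\mu J_\mu=0$ from the previous step gives $\int_{-1}^1 v\,\phi_\lambda(v)\phi_\mu(v)\,T(v)\,d\nu(v)=0$.

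The computation is purely algebraic and never uses any special structure of $\nu$ beyond total mass one, so it applies verbatim both to the continuous kernel $d\nu=dv/2$ of \eqref{eq:tr} and to the discrete quadrature measure $d\nu=\sum_k\omega_k\delta_{v_k}$ (legitimate since $\sum_k\omega_k=1$); thus the same eigenstructure governs the kinetic PDE and its discrete-ordinates truncation. The only point genuinely requiring care — and the one I expect to be the real obstacle when this proposition is later exploited — is the \emph{admissibility} of $\lambda$: one must ensure that $T(v)-\lambda v$ keeps a constant sign on the support of $\nu$, so that $\phi_\lambda$ is a well-defined integrable profile and the map $\lambda\mapsto\int_{-1}^1\frac{T(v)}{T(v)-\lambda v}\,d\nu(v)$ is monotone (hence the dispersion relation actually pins down the eigenvalues). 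This is precisely what the Haar / Chebyshev $T$-system machinery of \S\ref{sec:3} is designed to guarantee; it is structural rather than computational, and I would relegate it to the appendix results on eigenfunctions.
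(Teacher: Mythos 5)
Your proof is correct and follows essentially the same route as the paper: insert the ansatz, observe that the collision integral is a constant, normalize it to $1$ to get \eqref{eqphilam}, and cross-multiply the two pointwise identities $(T-\lambda v)\phi_\lambda=1$, $(T-\mu v)\phi_\mu=1$ to obtain orthogonality. The only (harmless) variation is that you multiply by $v\phi_\mu$ and $v\phi_\lambda$ and route the conclusion through the vanishing-current identity $\lambda J_\lambda=0$, whereas the paper multiplies by $T\phi_\mu$ and $T\phi_\lambda$ and gets $(\lambda-\mu)\int v\,\phi_\lambda\phi_\mu T\,d\nu=0$ directly from the normalization $\int T\phi_\lambda\,d\nu=1$, then deduces $J_\mu=0$ as the special case $\lambda=0$.
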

The $\lambda$ are usually called the ``eigenvalues'' and the corresponding $\phi_\lambda$ are the
``Case's eigenfunctions''.
\begin{remark}
Under assumption \eqref{eq:restrict} on the discrete velocity set, we may choose
the measure $\nu=\frac 12 \sum_{k=-K}^K \omega_k \delta(v-v_k)$ in this proposition.
Then, relation \eqref{ortho} becomes
\begin{equation}\label{orthodiscret}
\forall\, \lambda\neq \mu, \qquad  \sum_{k=-N}^N \omega_k v_k \phi_\lambda(v_k) \phi_\mu(v_k) T(v_k) = 0.
\end{equation}
In particular, for $\lambda=0$, 
\begin{equation}\label{ortho0discret}
\forall\, \mu \neq 0, \qquad  \sum_{k=1}^N \omega_k v_k (\phi_\mu(v_k)-\phi_\mu(-v_k)) = 0.
\end{equation}
\end{remark}

\begin{proof}
Inserting the ansatz $\barf(x,v)=\exp(-\lambda x)\phi_\lambda(v)$ into \eqref{eqstatdis} implies
$$
(T(v)-\lambda v) \phi_\lambda(v) = \int_{-1}^1 T(v) \phi_\lambda(v) \,d\nu(v).
$$
By linearity, $\phi_\lambda$ is defined up to a constant, we may fix this constant by imposing
$$
\int_{-1}^1 T(v) \phi_\lambda(v) \,d\nu(v) = 1.
$$
It gives the relations \eqref{eqphilam}.
Then, for two eigenvalues $\lambda\neq \mu$, we have
$$
(T(v) - \lambda v)\phi_\lambda(v) = 1; \qquad (T(v) - \mu v) \phi_\mu(v) = 1.
$$
We multiply the first identity by $T(v)\phi_\mu(v)$, the second by $T(v)\phi_\lambda(v)$, 
and integrate over $d\nu(v)$, we obtain after subtracting the 
resulting identities
$$
(\lambda-\mu) \int_{-1}^1 v \phi_\lambda(v) \phi_\mu(v) T(v) \,d\nu(v) = 0.
$$
We deduce the orthogonality relation in \eqref{ortho}.
\qed
\end{proof}

\subsection{The scattering matrix and its decomposition (\ref{decompS})}
\label{sec:SRTE}

Following the strategy proposed in Section \ref{sec:strategy}, we first determine the 
scattering matrix and its expansion \eqref{decompS}.
The stationary problem with incoming boundary data reads, for an index $j$,
\begin{align}
&\eps v \pa_x \barf = \frac 12 \int_{-1}^1 \barf(v')dv' -\barf, \qquad \mbox{ on } (0,\Delta x),  
\label{eq:stat1} \\
& \barf(0,v) = f_{j-1}(v), \qquad \barf(\Delta x,-v) = f_j(-v),  
\label{eq:stat2} 
\end{align}
where $\big( f_{j-1}(|v|),  f_j(-|v|)\big)$ are ``incoming values'', and outgoing ones read:
$$
\barf_{j-1/2}(|v|)=\barf(\Delta x,|v|), \quad \barf_{j-1/2}(-|v|) = \barf(0,-|v|).
$$

In view of Proposition \ref{prop:lambda}, equation \eqref{eqphilam} in the particular case $T=1$, the discrete eigenelements are given by
$$
\frac 12 \sum_{k=1}^K \omega_k\left(\frac{1}{1-\lambda v_k}+\frac{1}{1+\lambda v_k}\right) = 1, \qquad
\phi_\lambda(v) = \frac{1}{1-\lambda v}.
$$
Clearly, we have that if $\lambda$ is an eigenvalue, then $-\lambda$ is also an eigenvalue and $\phi_{-\lambda}(v)=\phi_\lambda(-v)$.
We observe also that the eigenvalue $\lambda=0$ is double; the eigenvectors are $1$ and $x-\eps v$. (Indeed we verify easily that such functions solve \eqref{eq:stat1}). 
Therefore, a quite general stationary solution is obtained by truncating to the first $2K$ eigenmodes, (see {\it e.g.} \cite{ac,cerc,book})
$$
\barf(x,v) = a_0 + b_0(x - \eps v) + \sum_{\ell=1}^{K-1}  \left(\frac{a_\ell\, e^{-\lambda_\ell\, x/ \eps }}{1-\lambda_\ell\, v} + \frac{b_\ell\, e^{\lambda_\ell (x-\Delta x)/ \eps }}{1+\lambda_\ell\, v}\right), \quad \lambda_\ell \geq 0.
$$
We denote the vector of so--called ``normal modes'',
$0 \leq {\lambda}:=\big({\lambda_1}, \ldots, {\lambda_{K-1}}\big)^\top$, and
the matrix of ``Case's eigenfunction''
$$
\Phi^\pm(x) = \begin{pmatrix}
\displaystyle \frac{ e^{-\lambda\, x/\eps}}{1 \mp \calV\otimes \lambda} \quad & \displaystyle \mathbf{1}_{\RR^K}
\quad & \displaystyle \frac{ e^{\lambda\, (x-\Delta x)/\eps}}{1 \pm \calV\otimes \lambda} 
\quad & \displaystyle x \mathbf{1}_{\RR^{K}} \mp \eps \calV
\end{pmatrix} \in \calM_{K\times 2K}(\RR),
$$
such that $\barf(x,\pm\calV)=\Phi^\pm(x)\begin{pmatrix} a \\ b
\end{pmatrix}$, with the notations $a=(a_1,\ldots,a_{K-1},a_0)^\top$ and 
$b=(b_1,\ldots,b_{K-1},b_0)^\top$.
(We recall that the notation $\frac{1}{1+\calV\otimes\lambda}$ denotes the
matrix in $\calM_K(\RR)$ whose coefficients are $(\frac{1}{1+ v_k \lambda_\ell})_{k,\ell}$).
Thanks to the boundary conditions \eqref{eq:stat2} we have
$$
\begin{pmatrix} \barf(0,\calV) \\ \barf(\Delta x,-\calV) \end{pmatrix}
= \begin{pmatrix} \Phi^+(0) \\ \Phi^-(\Delta x) \end{pmatrix}
\begin{pmatrix} a \\ b \end{pmatrix}, \qquad
\begin{pmatrix} \barf(\Delta x,\calV) \\ \barf(0,-\calV) \end{pmatrix}
= \begin{pmatrix} \Phi^+(\Delta x) \\ \Phi^-(0) \end{pmatrix}
\begin{pmatrix} a \\ b \end{pmatrix}.
$$
%
Thus, we deduce (see also \cite[Chap. 9 pp.175-176]{book}), that the solution of 
(\ref{eq:stat1})--(\ref{eq:stat2}) is expressed as,
$$
\begin{pmatrix} \barf_{j-1/2}(\calV) \\ \barf_{j-1/2}(-\calV) \end{pmatrix} =
\Mtilde_\eps M_\eps^{-1} \begin{pmatrix} f_{j-1}(\calV) \\ f_{j}(-\calV) \end{pmatrix},
$$
where
\begin{equation}\label{MtildeRTE}
\Mtilde_\eps := 
\begin{pmatrix} \Phi^+(\Delta x) \\ \Phi^-(0) \end{pmatrix} = 
\begin{pmatrix}
\frac{ e^{-\lambda\, \Delta x/\eps}}{1 - \calV\otimes \lambda} & \mathbf{1}_{\RR^K}
& \frac{ 1}{1 + \calV\otimes \lambda}  & \Delta x\mathbf{1}_{\RR^K} - \eps\calV  \\
\frac{ 1}{1 + \calV\otimes \lambda}  & \mathbf{1}_{\RR^K} &
\frac{ e^{-\lambda\, \Delta x/\eps}}{1 - \calV\otimes \lambda} &  \eps\calV
\end{pmatrix},
\end{equation}
\begin{equation}\label{MRTE}
M_\eps := 
\begin{pmatrix} \Phi^+(0) \\ \Phi^-(\Delta x) \end{pmatrix} =
\begin{pmatrix}
\frac{1}{1 - \calV\otimes \lambda} & \mathbf{1}_{\RR^K}
& \frac{ e^{-\lambda\, \Delta x/\eps}}{1 + \calV\otimes \lambda}  & - \eps\calV  \\
\frac{ e^{-\lambda\, \Delta x/\eps}}{1 + \calV\otimes \lambda} & \mathbf{1}_{\RR^K} &
\frac{ 1}{1 - \calV\otimes \lambda} &  \Delta x\mathbf{1}_{\RR^K}+\eps\calV
\end{pmatrix}.
\end{equation}
Thus we have obtained the first statement of the following Proposition:
\begin{proposition}\label{lem:scatrad}
The scattering matrix for the radiative transfer system is given by
\begin{equation}\label{scatRTE}
\calS^\eps =  \Mtilde_\eps M_\eps^{-1},\qquad \mbox{(independent of $j$)}
\end{equation}
where $\Mtilde_\eps$ and $M_\eps$ are given in \eqref{MtildeRTE}--\eqref{MRTE}.
It admits the decomposition
\begin{equation}
\label{decomp_RT}
\boxed{\calS^\eps = \begin{pmatrix} \mathbf{0}_K & \mathbf{I}_K - \zeta\gamma
\\ \mathbf{I}_K-\zeta\gamma & \mathbf{0}_K \end{pmatrix} + \eps B_\eps,
\, \mbox{ where } \, B_\eps := \frac{1}{\eps}(A_\eps M_\eps^{-1} - A_0 M_0^{-1}),}
\end{equation}
with $A_\eps=\Mtilde_\eps - \begin{pmatrix} \mathbf{0}_K  & \mathbf{I}_K \\ \mathbf{I}_K & \mathbf{0}_K  \end{pmatrix} M_\eps$,
$$
\zeta = \left(\frac{1}{ 1-\calV\otimes \lambda}\right) - \left(\frac{1}{ 1+\calV\otimes \lambda}\right)\in \calM_{K\times K-1}(\RR),
$$ 
and 
$\gamma\in\calM_{K-1 \times K}(\RR)$, $\beta^\top\in \RR^{K}$ are such that
\begin{align}
\gamma \left(\frac{1}{ 1-\calV\otimes \lambda}\right) = \mathbf{I}_{K-1}, \qquad
&\gamma \mathbf{1}_{\RR^K} = \mathbf{0}_{\RR^{K-1}},  \label{eq:invM1} \\
\beta^{\top} \left(\frac{1}{ 1-\calV\otimes \lambda}\right) = \mathbf{0}_{\RR^{K-1}}^\top, \qquad
&\beta^\top \mathbf{1}_{\RR^K}=1. \label{eq:invM2}
\end{align}
\end{proposition}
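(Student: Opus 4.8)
The plan is to obtain the leading term $\calS^0$ as the $\eps\to0$ limit of $\calS^\eps=\Mtilde_\eps M_\eps^{-1}$ and to read off the corrector purely algebraically. I would start from the tautology built into the definition $A_\eps=\Mtilde_\eps-P M_\eps$, where $P:=\left(\begin{smallmatrix}\mathbf{0}_K&\mathbf{I}_K\\\mathbf{I}_K&\mathbf{0}_K\end{smallmatrix}\right)$: since $\Mtilde_\eps=A_\eps+P M_\eps$, one has $\calS^\eps=\Mtilde_\eps M_\eps^{-1}=P+A_\eps M_\eps^{-1}$, hence $\calS^\eps-\bigl(P+A_0M_0^{-1}\bigr)=A_\eps M_\eps^{-1}-A_0M_0^{-1}=\eps B_\eps$. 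So \eqref{decomp_RT} is a formal rearrangement once $M_0$ is known invertible; what really must be proved is (i) that $\calS^0:=P+A_0M_0^{-1}$ coincides with the block-antidiagonal matrix with diagonal blocks $\mathbf{I}_K-\zeta\gamma$, and (ii) that $A_\eps M_\eps^{-1}-A_0M_0^{-1}=O(\eps)$, so that $B_\eps$ admits a finite limit as $\eps\to0$.

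\textbf{Step 1: passing to the limit in the mode matrices.} The nonzero normal modes satisfy $\lambda_\ell>0$ for $\ell=1,\dots,K-1$ (the zero mode being double, with exponential-free eigenfunctions $1$ and $x-\eps v$), so each $e^{-\lambda_\ell\Delta x/\eps}\to0$ as $\eps\to0^+$, and for fixed $\Delta x>0$ faster than any power of $\eps$. Passing to the limit in \eqref{MtildeRTE}--\eqref{MRTE} then yields $M_0,\Mtilde_0$ obtained by replacing every exponential entry and every entry carrying an explicit factor $\eps$ by $\mathbf{0}$. The crucial structural point is that, writing $N:=\tfrac{1}{1-\calV\otimes\lambda}\in\calM_{K\times(K-1)}(\RR)$, the $K\times K$ block $\left(\begin{smallmatrix}N&\mathbf{1}_{\RR^K}\end{smallmatrix}\right)$ is invertible, because the Case eigenfunctions $v\mapsto(1-\lambda_\ell v)^{-1}$ and the constant $1$ form a Chebyshev $T$-system on the velocity interval --- this is where the Haar property of \S\ref{sec:3}, in the rational-eigenfunction form recorded in the appendix, enters. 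Its inverse has rows $\gamma$ and $\beta^\top$, which are precisely the objects characterized by \eqref{eq:invM1}--\eqref{eq:invM2}; and since a one-sided inverse of a square matrix is two-sided, I would record the identity $N\gamma+\mathbf{1}_{\RR^K}\beta^\top=\mathbf{I}_K$, on which Step 2 hinges.

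\textbf{Step 2: inversion of $M_0$, assembly, and the $O(\eps)$ remainder.} With $a=(a_1,\dots,a_{K-1},a_0)^\top$ and $b=(b_1,\dots,b_{K-1},b_0)^\top$, the system $M_0(a,b)^\top=(y_+,y_-)^\top$ is block-triangular: its upper $K$ equations read $\left(\begin{smallmatrix}N&\mathbf{1}_{\RR^K}\end{smallmatrix}\right)(a_1,\dots,a_{K-1},a_0)^\top=y_+$, giving $(a_1,\dots,a_{K-1})^\top=\gamma y_+$, $a_0=\beta^\top y_+$; its lower $K$ equations then read $\left(\begin{smallmatrix}N&\mathbf{1}_{\RR^K}\end{smallmatrix}\right)(b_1,\dots,b_{K-1},a_0+\Delta x\,b_0)^\top=y_-$, giving $(b_1,\dots,b_{K-1})^\top=\gamma y_-$ and $b_0=\tfrac{1}{\Delta x}\beta^\top(y_--y_+)$. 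This produces $M_0^{-1}$ in closed form; multiplying $\Mtilde_0M_0^{-1}$ out and using $\tfrac{1}{1+\calV\otimes\lambda}=N-\zeta$ together with $N\gamma+\mathbf{1}_{\RR^K}\beta^\top=\mathbf{I}_K$, the two diagonal blocks of $A_0M_0^{-1}$ cancel to $\mathbf{0}_K$ and the two off-diagonal blocks collapse to $-\zeta\gamma$; adding $P$ back turns the antidiagonal blocks into $\mathbf{I}_K-\zeta\gamma$, which is the asserted form of $\calS^0$. For the remainder I would observe that every entry of $A_\eps-A_0$ and of $M_\eps-M_0$ carries either an explicit factor $\eps$ or a factor $e^{-\lambda_\ell\Delta x/\eps}$, and that $\eps^{-1}e^{-\lambda_\ell\Delta x/\eps}$ is bounded for fixed $\Delta x>0$ and vanishes as $\eps\to0$; since $M_\eps\to M_0$ with $M_0$ invertible, $M_\eps^{-1}$ stays bounded and $M_\eps^{-1}-M_0^{-1}=-M_\eps^{-1}(M_\eps-M_0)M_0^{-1}$, so writing $A_\eps M_\eps^{-1}-A_0M_0^{-1}=(A_\eps-A_0)M_\eps^{-1}+A_0(M_\eps^{-1}-M_0^{-1})$ shows $B_\eps$ has a finite limit.

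\textbf{Expected main obstacle.} The limit itself is soft; the work is concentrated in Step 2. One must carry the block bookkeeping through consistently --- keeping straight the coefficient ordering $(a_1,\dots,a_{K-1},a_0,b_1,\dots,b_{K-1},b_0)$ against the $\pm\calV$ ordering of the kinetic unknowns, and the placement of the two $\lambda=0$ columns --- and, above all, one must justify the invertibility of $\left(\begin{smallmatrix}N&\mathbf{1}_{\RR^K}\end{smallmatrix}\right)$, i.e. the Chebyshev $T$-system property of the relevant family of Case eigenfunctions on the discrete velocity set. That structural fact, together with the explicit bounds needed to keep $\|M_\eps^{-1}\|$ under control, is exactly what the appendix on eigenfunctions is there to supply.
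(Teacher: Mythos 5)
Your proposal is correct and follows essentially the same route as the paper: the tautological rearrangement $\calS^\eps=P+A_\eps M_\eps^{-1}$ with $A_\eps=\Mtilde_\eps-PM_\eps$, the limit $A_\eps\to A_0$, $M_\eps\to M_0$, the block-triangular inversion of $M_0$ via $\gamma$ and $\beta$ (whose existence rests on the Haar property from the appendix), and the identities $\gamma\mathbf{1}_{\RR^K}=\mathbf{0}$, $\beta^\top\mathbf{1}_{\RR^K}=1$ to collapse $A_0M_0^{-1}$ to the antidiagonal $-\zeta\gamma$ blocks. The only (harmless) additions are your explicit boundedness argument for $B_\eps$, which the paper defers to the following lemma.
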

%
%
\begin{remark}
The existence of $\gamma$ and $\beta$ is provided by Proposition \ref{prop-zeta-gamma} (iii) in Appendix.
It's good to have an ``intuitive idea'' of the nature of $\zeta, \gamma$ and $\beta$:
\begin{itemize}
\item First, $\zeta: \RR^{K-1} \to \RR^K$ converts a set of $K-1$ spectral coefficients into the restriction to $v \in \calV$ of a kinetic density $F(v)$ having a specific property,
$$\forall v_k \in \calV, \qquad 
F(v_k)=-F(-v_k),\quad \mbox{ so } \quad \sum_{k=1}^K \omega_k (F(v_k)+F(-v_k))=0.
$$
\item Then, $\gamma: \RR^{K} \to \RR^{K-1}$ recovers $K-1$ spectral coefficients corresponding to ``damped modes'' (Knudsen layers) out of $K$ samples of any kinetic density $F(v_k)$. More precisely, if we consider a given steady kinetic density,
$$
\forall v_k \in \calV,\qquad 
G(v_k)=a_0 + \sum^{K-1}_{\ell=1} \frac{a_\ell}{1-v_k\,\lambda_\ell},
$$
then $\gamma\, [G(\calV)] = (a_1, a_2, ..., a_{K-1})$: this is meaningful for $\eps \ll 1$.
\item  Oppositely, $\beta$ detects the ``Maxwellian part'' in the decomposition of $G$, that is, the $a_0$ coefficient, so that $\beta^\top \, [G(\calV)] = a_0$.
\end{itemize}
Yet, consider the product $\zeta \gamma: \RR^K \to \RR^K$, applied to $G(\calV)$. It produces,
$$
(\zeta \gamma[G])(v_k)=\sum^{K-1}_{\ell=1} a_\ell \left( \frac{1}{1-v_k\,\lambda_\ell} -  \frac{1}{1+v_k \lambda_\ell}
\right), \qquad k=1, ..., K,
$$
so that,
$$\forall v_k \in \calV, \qquad
\big(\mathbf{I}_K - (\zeta \gamma)\big)[G](v_k)=a_0 +  \sum^{K-1}_{\ell=1} \frac{a_\ell}{1+v_k\,\lambda_\ell}.
$$
\end{remark}
\begin{proof}\begin{enumerate}
\item Based on the simple case (\ref{S-01}), we define $A^\eps$ such that,
$$\forall \eps>0, \qquad
\calS^\eps = \begin{pmatrix} \mathbf{0}_K  & \mathbf{I}_K \\ \mathbf{I}_K & \mathbf{0}_K  \end{pmatrix} 
+ A_\eps M_\eps^{-1},
$$
which clearly yields:
$$
A_\eps=\Mtilde_\eps - \begin{pmatrix} \mathbf{0}_K  & \mathbf{I}_K \\ \mathbf{I}_K & \mathbf{0}_K  \end{pmatrix} M_\eps
= \begin{pmatrix}
\zeta e^{-\lambda\,\Delta x/\eps} & \mathbf{0}_{\RR^K} & -\zeta & -2\calV \eps  \\
-\zeta & \mathbf{0}_{\RR^K} & \zeta e^{-\lambda\,\Delta x/\eps} & 2\calV \eps
\end{pmatrix},
$$
As $\eps\to 0$, 
\begin{equation}\label{eq:A0}
A_\eps \to A_0 = \begin{pmatrix}
 \mathbf{0}_{K} & (-\zeta \quad \mathbf{0}_{\RR^K})  \\
(-\zeta \quad \mathbf{0}_{\RR^K}) &  \mathbf{0}_{K}
\end{pmatrix},
\end{equation}
along with,
\begin{align*}
M_\eps\to M_0 &=
\begin{pmatrix}
\frac{1}{1 - \calV\otimes \lambda} & \mathbf{1}_{\RR^K}
& \mathbf{0}_{K\times (K-1)}  & \mathbf{0}_K  \\
\mathbf{0}_{K\times (K-1)}  & \mathbf{1}_{\RR^K} &
\frac{1}{1 - \calV\otimes \lambda} &  \Delta x\mathbf{1}_{\RR^K}
\end{pmatrix}\\
& := \begin{pmatrix}
M_{01} &\quad  \mathbf{0}_K  \\
(\mathbf{0}_{K\times (K-1)} \quad  \mathbf{1}_{\RR^K}) &\quad M_{02}
\end{pmatrix},
\end{align*}•
with the notation 
$$
M_{01} = \left(\frac{1}{1 - \calV\otimes \lambda} \quad \mathbf{1}_{\RR^K}\right), \qquad
M_{02} = \left(\frac{1}{1 - \calV\otimes \lambda} \quad \Delta x\,\mathbf{1}_{\RR^K}\right).
$$
\item We now intend to compute the following inverse,
\begin{equation}\label{eq:M0}
M_0^{-1} = \begin{pmatrix}
M_{01}^{-1} &  \mathbf{0}_K  \\
-M_{02}^{-1}(\mathbf{0}_{K\times (K-1)} \quad  \mathbf{1}_{\RR^K})M_{01}^{-1}\quad & \quad  M_{02}^{-1}
\end{pmatrix},
\end{equation}
where
$$
M_{01}^{-1} = \begin{pmatrix}
\gamma \\ \beta^\top
\end{pmatrix} \in \calM_{K,K}(\RR),
\qquad 
M_{02}^{-1} = \begin{pmatrix}
\gamma \\ \frac{1}{\Delta x} \beta^\top
\end{pmatrix} \in \calM_{K,K}(\RR),
$$
being $\gamma$ a matrix of size $(K-1)\times K$ and $\beta$ an element in $\RR^K$, such that
\eqref{eq:invM1} and \eqref{eq:invM2} hold.
From the expression \eqref{eq:A0} and \eqref{eq:M0}, it comes:
$$
A_0 M_0^{-1} = \begin{pmatrix}
(-\zeta \quad \mathbf{0}_{\RR^K}) M_{02}^{-1}(\mathbf{0}_{K\times (K-1)} \quad  \mathbf{1}_{\RR^K})M_{01}^{-1}\quad &\quad  (-\zeta \quad \mathbf{0}_{\RR^K}) M_{02}^{-1}  \\
(-\zeta \quad \mathbf{0}_{\RR^K}) M_{01}^{-1} & \mathbf{0}_K
\end{pmatrix}
$$
\begin{itemize}
\item We first observe that
$$
(-\zeta \quad \mathbf{0}_{\RR^K})\begin{pmatrix}
\gamma \\ \beta^\top
\end{pmatrix} = (-\zeta \quad \mathbf{0}_{\RR^K})\begin{pmatrix}
\gamma \\ \frac{1}{\Delta x} \beta^\top
\end{pmatrix} = -\zeta \gamma.
$$
\item Then, using the second identity in \eqref{eq:invM1}, 
$$
A_0 M_0^{-1} = \begin{pmatrix}
\mathbf{0}_K & -\zeta\gamma \\ -\zeta\gamma & \mathbf{0}_K
\end{pmatrix}.
$$
\end{itemize}
Finally, we reach the decomposition (\ref{decomp_RT}).
\end{enumerate}
\qed\end{proof}
\begin{lemma}
With identical notation as Proposition \ref{lem:scatrad}, as $\eps\to 0$,
\begin{equation}\label{eqn_Be}
B_\eps=\frac{1}{\Delta x} \begin{pmatrix} (2\mathbf{I}_K - \zeta\gamma) \calV \beta^\top & 
-(2\mathbf{I}_K - \zeta\gamma) \calV \beta^\top   \\ 
-(2\mathbf{I}_K - \zeta\gamma) \calV \beta^\top  & 
(2\mathbf{I}_K - \zeta\gamma) \calV \beta^\top \end{pmatrix} + o(1).
\end{equation}
\end{lemma}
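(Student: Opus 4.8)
The plan is to Taylor-expand $A_\eps M_\eps^{-1}$ to first order in $\eps$ around $\eps=0^+$, writing
\[
\eps B_\eps \;=\; A_\eps M_\eps^{-1}-A_0M_0^{-1}\;=\;(A_\eps-A_0)\,M_\eps^{-1}\;+\;A_0\,(M_\eps^{-1}-M_0^{-1}),
\]
and treating the two summands separately. Two standing facts will be used repeatedly: first, $M_0$ is invertible, its inverse being given explicitly by \eqref{eq:M0} (whose existence rests on Proposition \ref{prop-zeta-gamma} of the Appendix), hence $M_\eps$ stays invertible for $\eps$ small and $M_\eps^{-1}\to M_0^{-1}$; second, the layer exponentials $e^{-\lambda_\ell\Delta x/\eps}$ occurring in $\Mtilde_\eps$ and $M_\eps$ (the normal modes $\lambda_\ell$ being positive) are flat at $\eps=0^+$, i.e. negligible against every power of $\eps$.

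\emph{First summand.} Reading off the block form of $A_\eps$ and of $A_0$ in \eqref{eq:A0}, every entry of $A_\eps-A_0$ other than the two copies of $\zeta e^{-\lambda\Delta x/\eps}$ is proportional to $\eps$; thus $A_\eps-A_0=\eps\,C+r_\eps$ where $r_\eps$ decays faster than any power of $\eps$, and, in the $(K-1)+1+(K-1)+1$ column partition,
\[
C := \begin{pmatrix} \mathbf{0}_{K\times(K-1)} & \mathbf{0}_{\RR^K} & \mathbf{0}_{K\times(K-1)} & -2\calV \\ \mathbf{0}_{K\times(K-1)} & \mathbf{0}_{\RR^K} & \mathbf{0}_{K\times(K-1)} & 2\calV \end{pmatrix}.
\]
Hence $\tfrac1\eps(A_\eps-A_0)M_\eps^{-1}\to C\,M_0^{-1}$. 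As $C$ has a single nonzero column (the $2K$-th), $C M_0^{-1}$ is the outer product of that column with the last row of $M_0^{-1}$; from the block formula \eqref{eq:M0}, the facts that $M_{01}^{-1}$ and $M_{02}^{-1}$ have last rows $\beta^\top$ and $\tfrac1{\Delta x}\beta^\top$ respectively, and the normalisation $\beta^\top\mathbf{1}_{\RR^K}=1$ from \eqref{eq:invM2}, the last row of $M_0^{-1}$ equals $\tfrac1{\Delta x}\begin{pmatrix}-\beta^\top & \beta^\top\end{pmatrix}$. Therefore
\[
C M_0^{-1}\;=\;\frac1{\Delta x}\begin{pmatrix} 2\calV\beta^\top & -2\calV\beta^\top \\ -2\calV\beta^\top & 2\calV\beta^\top\end{pmatrix},
\]
which is responsible for the $2\mathbf{I}_K$ in the statement.

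\emph{Second summand.} The same inspection of $M_\eps$ gives $M_\eps=M_0+\eps M^{(1)}+(\text{faster than any power of }\eps)$, with $M^{(1)}=\tfrac12 C$, whence by a Neumann expansion $M_\eps^{-1}=M_0^{-1}-\eps\,M_0^{-1}M^{(1)}M_0^{-1}+\mathrm{o}(\eps)$ and so $\tfrac1\eps A_0(M_\eps^{-1}-M_0^{-1})\to -(A_0M_0^{-1})\,M^{(1)}M_0^{-1}$. I would now reuse the identity
\[
A_0 M_0^{-1} = \begin{pmatrix}\mathbf{0}_K & -\zeta\gamma \\ -\zeta\gamma & \mathbf{0}_K\end{pmatrix}
\]
obtained inside the proof of Proposition \ref{lem:scatrad}; multiplying it on the right by $M^{(1)}$ (which annihilates every column but the last) and then by the last row of $M_0^{-1}$ found above yields
\[
-(A_0M_0^{-1})\,M^{(1)}M_0^{-1}\;=\;\frac1{\Delta x}\begin{pmatrix} -\zeta\gamma\calV\beta^\top & \zeta\gamma\calV\beta^\top \\ \zeta\gamma\calV\beta^\top & -\zeta\gamma\calV\beta^\top\end{pmatrix}.
\]
Adding the two limits and factoring $\calV\beta^\top$ out on the left of each block produces $(2\mathbf{I}_K-\zeta\gamma)\calV\beta^\top$ on the diagonal blocks and its opposite off-diagonal, which is exactly \eqref{eqn_Be}.

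No new idea is needed: the argument is block algebra built on what was already set up for Proposition \ref{lem:scatrad}. The points requiring care are (i) keeping the $(K-1)+1+(K-1)+1$ column partition consistent across $A_\eps$, $M_\eps$ and $M_0^{-1}$; (ii) checking that the layer exponentials drop out of both $\tfrac1\eps(A_\eps-A_0)$ and the $\mathrm{O}(\eps)$ term of $M_\eps^{-1}$ precisely because they are flat at $\eps=0^+$; and (iii) the one-line determination of the last row of $M_0^{-1}$, which is the only spot where the normalisation $\beta^\top\mathbf{1}_{\RR^K}=1$ intervenes and where a sign or index slip could creep in. I expect (iii)---the correct identification of that last row---to be the main, and essentially the only, delicate step.
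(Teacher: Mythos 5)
Your proposal is correct and follows essentially the same route as the paper: the paper also splits $\eps B_\eps$ into $(A_\eps-A_0)M_\eps^{-1}$ plus $(A_0-A_0M_0^{-1}M_\eps)M_\eps^{-1}$ (algebraically identical to your $A_0(M_\eps^{-1}-M_0^{-1})$ term), discards the flat layer exponentials, and lands the computation on the last row $\tfrac{1}{\Delta x}(-\beta^\top \ \ \beta^\top)$ of $M_0^{-1}$ via $\gamma\mathbf{1}_{\RR^K}=\mathbf{0}$ and $\beta^\top\mathbf{1}_{\RR^K}=1$. All your block computations and signs check out against \eqref{scheme1:mat} and the stated limit $B_0$.
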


\begin{proof}
From (\ref{decomp_RT}), and  using the form,
$$
M_\eps = M_0 + \begin{pmatrix}
\mathbf{0}_{K\times(K-1)} & \mathbf{0}_{\RR^K} & \frac{e^{-\lambda\, \Delta x/\eps}}{1+\calV\otimes \lambda} & \quad -\eps\calV \\
 \frac{e^{-\lambda\, \Delta x/\eps}}{1+\calV\otimes \lambda} & \mathbf{0}_{\RR^K} & 
\mathbf{0}_{K\times(K-1)} & \quad \eps \calV
\end{pmatrix},
$$
we get
\begin{align*}
B_\eps & =  \frac{1}{\eps}(A_\eps - A_0 M_0^{-1}M_\eps) M_\eps^{-1} \\
& =  \frac{1}{\eps}(A_\eps-A_0) M_\eps^{-1} \\
& \qquad \qquad +
\begin{pmatrix}
\zeta\gamma(\frac{1}{1+\calV\otimes \lambda}) \delta_\eps \quad \mathbf{0}_{\RR^K} &
\mathbf{0}_{K\times(K-1)} & \zeta\gamma\calV \\
\mathbf{0}_K & \zeta\gamma(\frac{1}{1+\calV\otimes \lambda}) \delta_\eps & -\zeta\gamma\calV 
\end{pmatrix} M_\eps^{-1},
\end{align*}
where $\delta_\eps = \frac{1}{\eps} e^{-\lambda\, \Delta x/\eps} \to 0$ as $\eps\to 0$.
Then,
\begin{equation}
B_\eps = 
\begin{pmatrix}
\zeta\delta_\eps+\zeta\gamma(\frac{1}{1+\calV\otimes \lambda}) \delta_\eps & \mathbf{0}_{\RR^K}
& \mathbf{0}_{K\times(K-1)} & (-2\mathbf{I}_K+\zeta\gamma)\calV \\
\mathbf{0}_{K\times(K-1)} & \mathbf{0}_{\RR^K} & 
\zeta\delta_\eps+\zeta\gamma(\frac{1}{1+\calV\otimes \lambda}) \delta_\eps & (2\mathbf{I}_K-\zeta\gamma)\calV 
\end{pmatrix} M_\eps^{-1}.
\label{scheme1:mat}
\end{equation}
As $\eps\to 0$, we get from \eqref{scheme1:mat} 
$$
B_\eps=\frac{1}{\eps}(A_\eps M_\eps^{-1} - A_0 M_0^{-1}) \underset{\eps\to 0}{\longrightarrow}
B_0 := \begin{pmatrix}
\mathbf{0}_{K\times(2K-1)} & (-2\mathbf{I}_{K}+\zeta\gamma)\calV \\
\mathbf{0}_{K\times(2K-1)} & (2\mathbf{I}_{K}-\zeta\gamma)\calV 
\end{pmatrix} M_0^{-1}.
$$
With the expression of the inverse of $M_0$ in \eqref{eq:M0}, we get
$$
B_0 = \frac{1}{\Delta x} \begin{pmatrix}
(2\mathbf{I}_K - \zeta\gamma) \calV \beta^\top \mathbf{1}_{\RR^K} \beta^\top & -(2\mathbf{I}_K - \zeta\gamma) \calV \beta^\top  \\
-(2\mathbf{I}_K - \zeta\gamma) \calV \beta^\top \mathbf{1}_{\RR^K} \beta^\top & (2\mathbf{I}_K - \zeta\gamma) \calV \beta^\top 
\end{pmatrix}.
$$
Thanks to \eqref{eq:invM2}, we have $\beta^\top \mathbf{1}_{\RR^K}=1$ and we are done.
\qed\end{proof}

\subsection{Emergence of the macroscopic discretization}

Thanks to Proposition \ref{lem:scatrad}, we have at hand the expansion of the scattering matrix
with respect to $\eps$;
then, we may write the final scheme \eqref{schemeAPWB} 
for the radiative transfer equation,
\begin{align}
\begin{pmatrix} f_j^{n+1}(\calV) \\ f_{j-1}^{n+1}(-\calV) \end{pmatrix} 
+\frac{\Delta t}{\eps\Delta x}\VV 
\begin{pmatrix} f_j^{n+1}(\calV)-(\mathbf{I}_K-\zeta\gamma) f_j^{n+1}(-\calV) \\ 
f_{j-1}^{n+1}(-\calV)-(\mathbf{I}_K-\zeta\gamma) f_{j-1}^{n+1}(\calV) \end{pmatrix}
\nonumber \\ =
\begin{pmatrix} f_j^{n}(\calV) \\ f_{j-1}^{n}(-\calV) \end{pmatrix}
+\frac{\Delta t}{\Delta x} \VV B_\eps
\begin{pmatrix} f_{j-1}^{n}(\calV) \\ f_j^{n}(-\calV) \end{pmatrix}.
\label{scheme1:tr}
\end{align}
We are now in position to state our main result for the radiative transfer equation,
whose proof is postponed to the next subsection.
\begin{theorem}\label{th:RTE}
The scheme \eqref{scheme1:tr} for the radiative transfer equation \eqref{eq:tr} is well-balanced
and uniformly accurate (AP) with respect to $\eps$.
Moreover, assuming \eqref{eq:restrict} holds, when $\eps\to 0$, the macroscopic density $\rho^n_j:= \sum_{k=-K}^K \omega_k f_j^n(v_k)$
solves the centered finite difference scheme for the heat equation \eqref{hit_eq}.
\end{theorem}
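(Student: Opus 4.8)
\medskip
\noindent\textbf{Proof strategy.} The plan is to read the well-balanced and stability properties off the construction, and to obtain the diffusive limit from a discrete Fredholm alternative for the matrix $\calH_0$ combined with a Hilbert expansion. The well-balanced claim is essentially free: scheme \eqref{scheme1:tr} is \eqref{schemeAPWB} for the scattering matrix of Proposition \ref{lem:scatrad}, and since the splitting $\calS^\eps=\calS^0+\eps B_\eps$ merely moves the stiff part of $\calS^\eps$ to the implicit side, any discrete steady state (a pair with $\binom{f_j(\calV)}{f_{j-1}(-\calV)}=\calS^\eps\binom{f_{j-1}(\calV)}{f_j(-\calV)}$) is a fixed point of one time step. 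Non-negativity and $L^1$-conservation under the CFL condition follow from Lemma \ref{lem:CGT}, the column-stochasticity \eqref{eq:massc} holding as in the two-stream case; uniform accuracy will be a by-product of the solvability analysis below.

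For the limit I would first put the scheme into the IMEX form \eqref{IMEX}, so that $\calH_\eps\to\calH_0=\frac{\Delta t}{\Delta x}\VV\left(\begin{smallmatrix}\mathbf{I}_K & -(\mathbf{I}_K-\zeta\gamma)\\ -(\mathbf{I}_K-\zeta\gamma) & \mathbf{I}_K\end{smallmatrix}\right)$, and analyse $\mathrm{Ker}\,\calH_0$ and $\mathrm{Im}\,\calH_0$. Write $P:=\mathbf{I}_K-\zeta\gamma$ and $\varpi:=(\omega_1v_1,\dots,\omega_Kv_K)^\top\in\RR^K$. From $\gamma\mathbf{1}_{\RR^K}=\mathbf{0}$ in \eqref{eq:invM1} one gets $P\mathbf{1}_{\RR^K}=\mathbf{1}_{\RR^K}$, whence $(\mathbf{1}_{\RR^K},\mathbf{1}_{\RR^K})^\top\in\mathrm{Ker}\,\calH_0$; and the discrete orthogonality relation \eqref{ortho0discret} says exactly that $\varpi^\top$ annihilates every column of $\zeta$ (its $\ell$-th column being $(\phi_{\lambda_\ell}(v_k)-\phi_{\lambda_\ell}(-v_k))_k$ with $\lambda_\ell\neq0$), whence $\varpi^\top P=\varpi^\top$ and $\big((\omega_k)_k,(\omega_k)_k\big)^\top\in\mathrm{Ker}\,\calH_0^\top$. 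To see these lines exhaust the kernels I would split a kernel vector of $\left(\begin{smallmatrix}\mathbf{I}_K & -P\\ -P & \mathbf{I}_K\end{smallmatrix}\right)$ into its symmetric and antisymmetric parts, which reduces matters to $\mathrm{Ker}(\mathbf{I}_K-P)=\mathrm{span}\{\mathbf{1}_{\RR^K}\}$ and to the nonsingularity of $2\mathbf{I}_K-\zeta\gamma$; both I would draw from the Haar/total-positivity properties of Case's eigenfunctions in the Appendix (Proposition \ref{prop-zeta-gamma}, which in particular makes $\zeta$ injective). With $\dim\mathrm{Ker}\,\calH_0=1$, necessarily $\mathrm{Im}\,\calH_0=\{(p,q):\ \sum_k\omega_k(p_k+q_k)=0\}$, the hyperplane of kinetic states with null zeroth moment --- the discrete Fredholm alternative. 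This also gives invertibility of $\calH_\eps$ for all $\eps\ge0$ with $\eps$-uniform bounds, i.e. the asymptotic-preserving property.

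It then remains to identify the limiting macroscopic scheme. Inserting $f_j^n=\{f^0\}_j^n+\eps\{f^1\}_j^n+\cdots$ into \eqref{IMEX}: at order $\eps^0$ one finds $\{f^0\}_j^n(\pm\calV)\in\mathrm{Ker}\,\calH_0$, so by the kernel computation and $\sum_k\omega_k=1$ from \eqref{eq:restrict} one has $\{f^0\}_j^n(v_k)=\{f^0\}_j^n(-v_k)=\tfrac12\rho_j^n$ for all $k$ (the discrete Maxwellian). At order $\eps^1$ one obtains \eqref{eqR0Intro}, whose solvability requires its right-hand side to be orthogonal to $\big((\omega_k)_k,(\omega_k)_k\big)^\top$ --- and taking that scalar product is precisely the quadrature (zeroth-moment) projection. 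Reading the blocks of $\calS^{1,0}=\lim_{\eps\to0}B_\eps$ off \eqref{eqn_Be}, namely $S^{1,0}_1=S^{1,0}_4=-S^{1,0}_2=-S^{1,0}_3=\tfrac1{\Delta x}(2\mathbf{I}_K-\zeta\gamma)\calV\beta^\top$, and using the Maxwellian together with $\beta^\top\mathbf{1}_{\RR^K}=1$ from \eqref{eq:invM2}, the solvability condition collapses to
\[
\rho_j^{n+1}=\rho_j^n+\frac{\Delta t}{2\Delta x^2}\,\varpi^\top(2\mathbf{I}_K-\zeta\gamma)\calV\,\big(\rho_{j-1}^n-2\rho_j^n+\rho_{j+1}^n\big).
\]
Finally $\varpi^\top\zeta=0$ (by \eqref{ortho0discret}) kills the $\zeta\gamma$-contribution and $\varpi^\top\calV=\sum_k\omega_kv_k^2=\tfrac13$ (by \eqref{eq:restrict}) fixes the coefficient, yielding $\rho_j^{n+1}=\rho_j^n+\frac{\Delta t}{3\Delta x^2}(\rho_{j-1}^n-2\rho_j^n+\rho_{j+1}^n)$, the centered finite-difference scheme for \eqref{hit_eq}.

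The step I expect to be the real obstacle is the non-degeneracy claim: that $2\mathbf{I}_K-\zeta\gamma$ is invertible, equivalently that $\calH_0$ has no parasitic kernel mode beyond the Maxwellian line. This is the only point genuinely needing the Chebyshev $T$-system / total-positivity machinery for Case's eigenfunctions developed in the Appendix; once the discrete Fredholm alternative is in hand, the uniform solvability and the moment computation are routine.
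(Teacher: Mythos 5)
Your proposal is correct and follows essentially the same route as the paper: IMEX form, Hilbert expansion, the discrete Fredholm alternative for $\calH_0$ (kernel equal to the Maxwellian line, range equal to the zero-moment hyperplane), and the order-one solvability condition obtained by pairing against the left null vector $\big((\omega_k)_k,(\omega_k)_k\big)^\top$, which is equivalent to the paper's device of adding $\frac{1}{2\Delta x}\calH_0\big(\calV(\rho_{j+1}^n-\rho_j^n),\,\calV(\rho_{j-1}^n-\rho_j^n)\big)^\top$ to both sides before reading off the zeroth moment. The one caveat is that the non-degeneracy you rightly isolate (injectivity of $\zeta$ and invertibility of $2\mathbf{I}_K-\zeta\gamma$) does not follow from Proposition \ref{prop-zeta-gamma} alone --- that proposition is the Haar property at the $K$ nodes $v_1<\dots<v_K$ only --- but from Lemma \ref{Hzero}, whose proof counts roots of a degree-$2(K-1)$ polynomial over the $2K$ symmetrized nodes $\pm v_k$ (exploiting the parity of the relevant combinations of $\phi_{\pm\lambda_\ell}$); this is precisely the lemma the paper invokes at that point, so your deferral lands on the right appendix, just not on the right statement.
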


As notice in \S\ref{sec:idea}, we may rewrite \eqref{scheme1:tr} in IMEX form (see \eqref{IMEX}), 
\begin{equation}\label{scheme2:tr}
\frac{1}{\eps}\calH_\eps 
\begin{pmatrix} f_j^{n+1}(\calV) \\ f_j^{n+1}(-\calV) \end{pmatrix} =
\begin{pmatrix} f_j^{n}(\calV)  \\ f_{j}^n(-\calV)) \end{pmatrix}
+ \frac{\Delta t}{\Delta x} \VV
\begin{pmatrix} B_{1\eps} f_{j-1}^n(\calV)+B_{2\eps} f_j^n(-\calV) \\
B_{3\eps} f_{j}^n(\calV)+B_{4\eps} f_{j+1}^n(-\calV) \end{pmatrix},
\end{equation}
where we use the notation $B_\eps = \begin{pmatrix} B_{1\eps} & B_{2\eps} \\ B_{3\eps} & B_{4\eps} \end{pmatrix}$, and
\begin{equation}\label{RepsRTE}
\calH_\eps = \eps \mathbf{I}_{2N} + \frac{\Delta t}{\Delta x}\VV
\begin{pmatrix} \mathbf{I}_K & \zeta\gamma-\mathbf{I}_K \\ 
\zeta\gamma-\mathbf{I}_K & \mathbf{I}_K \end{pmatrix}.
\end{equation}
Solving this system amounts to the inversion of matrix $\calH_\eps$.
By construction, this scheme satisfies the well-balanced property and is asymptotic preserving.
\begin{proposition}\label{corol1}
Assume that for all discrete eigenvalues $\lambda_i$, $i=1,\ldots,K-1$, 
identity \eqref{ortho0discret} holds, i.e.
$$
\sum_{k=1}^{K} \omega_k v_k \left(\frac{1}{1-v_k\, \lambda_i} - \frac{1}{1+v_k\, \lambda_i} \right) = 0.
$$
Then the scheme (\ref{scheme2:tr}) is mass-preserving, uniformly when $\eps\ll 1$.
\end{proposition}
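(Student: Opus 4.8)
The plan is to reduce the macroscopic update to a conservative (flux-difference) form, from which conservation of $\sum_j\rho_j^n$ is immediate and manifestly independent of $\eps$. Write $w=(\omega_1,\dots,\omega_K)^\top$, put $\mu^\top:=w^\top\,\mathrm{diag}(v_1,\dots,v_K)=(\omega_1 v_1,\dots,\omega_K v_K)$, and let $\ell^\top:=(w^\top,w^\top)\in\RR^{2K}$, so that $\ell^\top\begin{pmatrix}f_j^n(\calV)\\f_j^n(-\calV)\end{pmatrix}=\rho_j^n$ and $\ell^\top\VV=(\mu^\top,\mu^\top)$.

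First I would recast the hypothesis. The matrix $\zeta$ of Proposition \ref{lem:scatrad} has entries $\zeta_{k,i}=\tfrac{1}{1-v_k\lambda_i}-\tfrac{1}{1+v_k\lambda_i}$, so the assumed identity \eqref{ortho0discret} is precisely $\mu^\top\zeta=\mathbf{0}^\top$ (hence also $\mu^\top\zeta\gamma=\mathbf{0}^\top$, no knowledge of $\gamma$ being needed). Multiplying \eqref{RepsRTE} on the left by $\ell^\top$ then gives $\ell^\top\calH_\eps=\eps\,\ell^\top+\tfrac{\Delta t}{\Delta x}(\mu^\top,\mu^\top)\left(\begin{smallmatrix}\mathbf{I}_K&\zeta\gamma-\mathbf{I}_K\\\zeta\gamma-\mathbf{I}_K&\mathbf{I}_K\end{smallmatrix}\right)=\eps\,\ell^\top$, since both block-columns of that matrix are annihilated on the left by $(\mu^\top,\mu^\top)$. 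Consequently, applying $\ell^\top$ on the left of \eqref{scheme2:tr} collapses $\tfrac1\eps\calH_\eps$ to the identity: the left-hand side becomes exactly $\rho_j^{n+1}$ and the $1/\eps$ term disappears, uniformly in $\eps$.

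Next I would establish that the explicit block carries no net zeroth moment, i.e. $(\mu^\top,\mu^\top)B_\eps=\mathbf{0}^\top$ for every $\eps>0$. Using $\calS^\eps=\Mtilde_\eps M_\eps^{-1}$ from \eqref{scatRTE}, it suffices to verify $(\mu^\top,\mu^\top)(\Mtilde_\eps-M_\eps)=\mathbf{0}^\top$, which I would do by a block-column computation on \eqref{MtildeRTE}--\eqref{MRTE}: on the two ``Case'' blocks, summing the two sub-blocks produces a scalar multiple of $\sum_k\omega_k v_k\bigl(\tfrac{1}{1-v_k\lambda_i}-\tfrac{1}{1+v_k\lambda_i}\bigr)=0$ (the hypothesis once more); the constant block is identically zero; and the remaining (single) column block gives $\Delta x\,\mu^\top\mathbf{1}_{\RR^K}-\Delta x\,\mu^\top\mathbf{1}_{\RR^K}=0$ with no hypothesis at all. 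Hence $(\mu^\top,\mu^\top)\calS^\eps=(\mu^\top,\mu^\top)$ — which is the $\Gamma$-left-stochasticity requirement \eqref{eq:massc} of Lemma \ref{lem:CGT}; combined with $(\mu^\top,\mu^\top)\calS^0=(\mu^\top,\mu^\top)$ (immediate from $\mu^\top\zeta=\mathbf{0}^\top$ and the block form of $\calS^0$ in \eqref{decomp_RT}), subtracting and dividing by $\eps$ yields $(\mu^\top,\mu^\top)B_\eps=\mathbf{0}^\top$. Writing $B_\eps=\left(\begin{smallmatrix}B_{1\eps}&B_{2\eps}\\B_{3\eps}&B_{4\eps}\end{smallmatrix}\right)$ this is $\mu^\top B_{3\eps}=-\mu^\top B_{1\eps}$ and $\mu^\top B_{4\eps}=-\mu^\top B_{2\eps}$.

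Finally I would put the two pieces together. Applying $\ell^\top$ to the right-hand side of \eqref{scheme2:tr} and using $\ell^\top\VV=(\mu^\top,\mu^\top)$, the macroscopic scheme becomes $\rho_j^{n+1}=\rho_j^n+\tfrac{\Delta t}{\Delta x}\bigl(\calF^n_{j-\frac12}-\calF^n_{j+\frac12}\bigr)$ with the scalar flux $\calF^n_{j-\frac12}:=\mu^\top B_{1\eps}f_{j-1}^n(\calV)+\mu^\top B_{2\eps}f_j^n(-\calV)$, the equality $\mu^\top B_{3\eps}f_j^n(\calV)+\mu^\top B_{4\eps}f_{j+1}^n(-\calV)=-\calF^n_{j+\frac12}$ being exactly the previous step. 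Summing over $j$ (for periodic data, or data decaying at infinity) the flux differences telescope, so $\sum_j\rho_j^{n+1}=\sum_j\rho_j^n$; since every identity above holds verbatim for all $\eps>0$, the conservation is uniform as $\eps\ll1$ (and the limit scheme at $\eps=0$, the centred finite-difference discretization of \eqref{hit_eq}, is conservative by the same token). The only step requiring real care is the block-column evaluation of $(\mu^\top,\mu^\top)(\Mtilde_\eps-M_\eps)$: one must keep the $2\times4$ block layout and the $v_k\mapsto-v_k$ bookkeeping straight so that hypothesis \eqref{ortho0discret} can be read off, but this is a routine verification once the above notation is set up.
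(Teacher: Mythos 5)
Your proof is correct, and it takes a more direct and self-contained route than the paper's. The paper disposes of this proposition in a few lines by appealing to Lemma \ref{lem:CGT}: it only checks that the hypothesis \eqref{ortho0discret} gives $\sum_k\omega_k v_k(\zeta\gamma)_{kj}=0$, i.e.\ that $\Gamma\calS^0\Gamma^{-1}$ (the \emph{leading-order} block of the decomposition \eqref{decomp_RT}) is left-stochastic, and leaves the rest to that lemma — which, strictly speaking, is stated for the non-split scheme \eqref{eqgen:scheme} rather than for the IMEX form \eqref{scheme2:tr}. You instead (i) verify the full stochasticity condition \eqref{eq:massc} for \emph{every} $\eps>0$ by the block-column computation $(\mu^\top,\mu^\top)(\Mtilde_\eps-M_\eps)=\mathbf{0}^\top$, which is a genuine strengthening (the paper never checks $\Gamma\calS^\eps\Gamma^{-1}$ itself, only its $\eps\to0$ limit), and (ii) track the moment functional $\ell^\top$ through the actual scheme \eqref{scheme2:tr}, using $\ell^\top\calH_\eps=\eps\,\ell^\top$ and $(\mu^\top,\mu^\top)B_\eps=\mathbf{0}^\top$ to exhibit the macroscopic update in conservative flux-difference form. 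What your approach buys is a proof that applies verbatim to the IMEX scheme actually named in the statement, with the uniformity in $\eps$ visible (every identity holds for all $\eps>0$, not just asymptotically); what the paper's approach buys is brevity, by reusing Lemma \ref{lem:CGT}. The key algebraic input — the hypothesis read as $\mu^\top\zeta=\mathbf{0}^\top$ — is the same in both. The only presentational caveat is that your middle step silently uses $\calS^\eps-\calS^0=\eps B_\eps$ (i.e.\ $A_\eps M_\eps^{-1}-A_0M_0^{-1}=\calS^\eps-\calS^0$, immediate from the definition of $A_\eps$ in Proposition \ref{lem:scatrad}); it would be worth one sentence to make that explicit.
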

%
%
\begin{proof}
It is mostly a consequence of Lemma \ref{lem:CGT} and Proposition \ref{lem:scatrad}. Indeed, 
by Lemma \ref{lem:CGT}, $\Gamma \calS^\eps \Gamma^{-1}$ needs to be left stochastic for any $\eps>0$, which implies (by Proposition \ref{lem:scatrad}) that, for $\eps\ll 1$, the matrix
$$
\Gamma \begin{pmatrix} \mathbf{0}_K & \mathbf{I}_K - \zeta\gamma \\ \mathbf{I}_K - \zeta\gamma & 
\mathbf{0}_K \end{pmatrix} \Gamma^{-1} \quad \mbox{ is left stochastic.}
$$
Hence, for all $j=1,\ldots,K$, the sum of the $j^{th}$ column is equal to $1$, that is
$\sum_{k=1}^K \omega_k v_k (\zeta\gamma)_{kj} = 0$.
Accordingly, it comes (by general assumptions) that, 
\begin{align*}
\sum_{k=1}^K \omega_k v_k (\zeta\gamma)_{kj} &= \sum_{k=1}^K \sum_{i=1}^{K-1} \omega_k v_k \left(\frac{1}{1-v_k\, \lambda_i} - \frac{1}{1+v_k\, \lambda_i} \right) \gamma_{ij} = 0.
\end{align*}
\qed\end{proof}

\subsection{Consistency with the diffusive limit}
\label{sec:consistancetr}

This subsection is devoted to the proof of Theorem \ref{th:RTE}.

\noindent{\it Proof of Theorem \ref{th:RTE}}
When $\eps\to 0$, we get from \eqref{scheme1:mat} 
$$
B_\eps
\longrightarrow B_0 = \frac{1}{\Delta x} \begin{pmatrix}
(2\mathbf{I}_K - \zeta\gamma) \calV \beta^\top & -(2\mathbf{I}_K - \zeta\gamma) \calV \beta^\top  \\
-(2\mathbf{I}_K - \zeta\gamma) \calV \beta^\top & (2\mathbf{I}_K - \zeta\gamma) \calV \beta^\top 
\end{pmatrix}.
$$
Moreover, with \eqref{RepsRTE},
$$
\calH_\eps = \calH_0 + \eps \mathbf{I}_{2K}, \quad \mbox{ where }
\calH_0 := \frac{\Delta t}{\Delta x} \VV
\begin{pmatrix} \mathbf{I}_K & \zeta\gamma - \mathbf{I}_K \\ 
\zeta\gamma - \mathbf{I}_K & \mathbf{I}_K \end{pmatrix}.
$$
Assuming that $f$ admits a Hilbert expansion $f=f^0 + \eps f^1 + o(\eps)$.
We inject into scheme \eqref{scheme2:tr}. Then by identifying the term in power of $\eps$, we get
\begin{equation}\label{eqf0}
\calH_0
\begin{pmatrix} \{f^0\}_j^{n+1}(\calV) \\ \{f^0\}_j^{n+1}(-\calV) \end{pmatrix} = 0,
\end{equation}
%
and at order $0$ in $\eps$,
\begin{align}\label{eqf1}
\calH_0
\begin{pmatrix} \{f^1\}_j^{n+1}(\calV) \\ \{f^1\}_j^{n+1}(-\calV) \end{pmatrix}
= \begin{pmatrix}
\{f^0\}_j^{n}(\calV) - \{f^0\}_j^{n+1}(\calV) \\
\{f^0\}_j^{n}(-\calV) - \{f^0\}_j^{n+1}(-\calV) 
\end{pmatrix}
\\ + \frac{\Delta t}{2\Delta x^2} \VV 
\begin{pmatrix}
(2\mathbf{I}_K - \zeta\gamma) \calV \beta^\top (\{f^0\}_{j-1}^n(\calV) - \{f^0\}_j^n(-\calV)) \\ 
(2\mathbf{I}_K - \zeta\gamma) \calV \beta^\top (\{f^0\}_{j+1}^n(\calV) - \{f^0\}_{j}^n(-\calV))
\end{pmatrix}. \nonumber
\end{align}
We will make use of Lemma \ref{Hzero} in the Appendix, which may be applied with $\mu=\lambda$ 
since \eqref{ortho0discret} holds, it gives:
\begin{itemize}
\item Ker$(\calH_0)= \mbox{Span}(\mathbf{1}_{\RR^{2K}})$,
\item Im$(\calH_0)=
\Big\{Z= (Z_1\ Z_2)^\top,\ Z_i\in \RR^{K} \mbox{ such that } 
\sum_{k=1}^K \omega_k ({Z_1}_k+{Z_2}_{k}) = 0\Big\}$.
\end{itemize}
Roughly speaking, the range of $\calH_0$ is an hyperplane containing kinetic densities which moment of order zero vanishes. Its kernel is the (one-dimensional) vectorial line of constant kinetic densities, which are the Maxwellians for (\ref{eq:tr}).
%
We deduce from \eqref{eqf0}
\begin{equation}\label{equilib}
\{f^0\}_j^{n+1}(\pm \calV) = \frac{\rho_j^{n+1}}{2} \mathbf{1}_{\RR^K},
\qquad \rho_j^n=\sum_{k=1}^{K} \omega_k (f_j^n(v_k)+f_j^n(-v_k)).
\end{equation}
Then, injecting \eqref{equilib} into \eqref{eqf1} and using \eqref{eq:invM2},
we obtain
$$
\calH_0
\begin{pmatrix} \{f^1\}_j^{n+1}(\calV) \\ \{f^1\}_j^{n+1}(-\calV) \end{pmatrix}
= \frac{1}{2} \begin{pmatrix}
(\rho_j^{n} - \rho_j^{n+1})\mathbf{1}_{\RR^K} \\
(\rho_j^{n} - \rho_j^{n+1})\mathbf{1}_{\RR^K}
\end{pmatrix}
+ \frac{\Delta t}{\Delta x^2} \VV 
\begin{pmatrix}
(2\mathbf{I}_K - \zeta\gamma) \calV (\rho_{j-1}^n - \rho_j^n)  \\
(2\mathbf{I}_K - \zeta\gamma) \calV (\rho_{j+1}^n - \rho_{j}^n)
\end{pmatrix}.
$$
Moreover, by definition of $\calH_0$, we have
$$
\frac{1}{2\Delta x}\calH_0
\begin{pmatrix} \calV (\rho_{j+1}^n - \rho_j^n) \\ \calV (\rho_{j-1}^n - \rho_j^n)  \end{pmatrix} =
\frac{\Delta t}{2\Delta x^2} \VV
\begin{pmatrix} \calV (\rho_{j+1}^n - \rho_j^n) + (\zeta\gamma -\mathbf{I}_K) \calV (\rho_{j-1}^n - \rho_j^n) \\ 
(\zeta\gamma - \mathbf{I}_K ) \calV (\rho_{j+1}^n - \rho_j^n) + \calV (\rho_{j-1}^n - \rho_j^n)  \end{pmatrix}.
$$
Therefore, adding the last two equalities, we deduce that
\begin{align*}
&\calH_0
\begin{pmatrix} \{f^1\}_j^{n+1}(\calV) - \frac{1}{2\Delta x} \calV (\rho_{j+1}^n - \rho_j^n)  \\
\{f^1\}_j^{n+1}(-\calV) - \frac{1}{2\Delta x} \calV (\rho_{j-1}^n - \rho_j^n) \end{pmatrix}
\\
&= \frac{1}{2} 
(\rho_j^{n} - \rho_j^{n+1})\mathbf{1}_{\RR^{2N}} + \frac{\Delta t}{2\Delta x^2} \VV^2 (\rho_{j+1}^n + \rho_{j-1}^n - 2\rho_j^n).
\end{align*}
A solution exists iff the right hand side belongs to Im$(\calH_0)$, so by Lemma \ref{Hzero},
\begin{equation}\label{rho-FD}
\boxed{
0 = \rho_j^n - \rho_j^{n+1} + \frac{\Delta t}{\Delta x^2} 
(\rho_{j-1}^n - 2 \rho_j^n + \rho_{j+1}^n) \sum_{k=1}^{K} 
\omega_k v_k^2.}
\end{equation}
We conclude the proof thanks to \eqref{eq:restrict}.
%
\qed

\section{Othmer-Alt model for one-dimensional chemotaxis}\label{sec:5}

\subsection{Continuous diffusive limit toward Keller-Segel}

We consider now a model of chemotaxis in parabolic scaling, (see also \cite{emako})
\begin{equation}\label{eq:kinchemo}
\eps \pa_t f + v\pa_x f = \frac{1}{\eps}\left(\int_V T_\eps(t,x,v')f(t,x,v')\,\frac{dv'}{2} - T_\eps(t,x,v)f(t,x,v)\right).
\end{equation}
The tumbling rate $T_\eps$ describes the response to variations of chemical concentration along the path of bacteria. Among several choices,\cite{CGT,othill}, we choose
\begin{equation}\label{eq:Teps}
T_\eps(t,x,v) = 1 + \eps \phi \big(v \pa_x S(t,x) \big), \quad \phi \mbox{ an odd function}.
\end{equation}
In applications, the quantity $S$ models the concentration of the chemoattractant which is released by 
bacteria themselves. It is then computed thanks to an elliptic/parabolic equation depending on the 
density of bacteria. Since we only focus on the diffusive limit of the kinetic system, we will consider
that $S$ is given, which boils down, from a numerical point of view, to treat explicitely in time the
equation for $S$.

Formally, the limit $\eps\to 0$ may be obtained easily by performing a Hilbert expansion, $f=f^0 + \eps f^1 + \ldots$,
equating each term in power of $\eps$ in \eqref{eq:kinchemo}, 
\begin{align*}
&f^0 = \frac{1}{2} \rho^0, \qquad \rho^0 = \int_V f^0(v)\,dv;  \\
&f^1-\frac 12 \int_V f^1(v')\,dv' =\frac 12 \int_V  \phi(v'\pa_xS) f_0(v') \,dv' -\phi(v\pa_xS) f_0 - v\pa_x f^0.
\end{align*}
Since $\phi$ is odd, $V=(-1,1)$ is symmetric and $f^0$ is independent of $v$,  the first term of the right hand side vanishes. By conservation, we have
$$
\pa_t \int_V f^0(v)\,dv + \pa_x \int_V vf^1(v)\,dv = 0.
$$
Along with the expressions of $f^0$ and $f^1$, 
\begin{equation}\label{drift_D}
\pa_t \rho^0 - \pa_x \left( \frac{1}{3} \pa_x \rho^0 + E \rho^0 \right) = 0, \qquad E=\frac 12\int_V v\phi(v\pa_x S)\,dv.
\end{equation}

\subsubsection*{Sharfetter-Gummel scheme}

The Il'in/Sharfetter-Gummel scheme \eqref{SGIntro}--\eqref{FluxSGIntro} for this latter equation reads
\begin{equation}\label{SGchemo}
\rho_j^{n+1} = \rho_j^n + \frac{\Delta t}{\Delta x}
\big(\bar{\calJ}_{j-\frac 12}^n-\bar{\calJ}_{j+\frac 12}^n\big), \quad
\bar{\calJ}_{j-1/2}^n = E_{j-\frac 12} \frac{e^{3E_{j-\frac 12}\Delta x}\rho_j^n-\rho_{j-1}^n}
{1-e^{3E_{j-\frac 12}\Delta x}},
\end{equation}
where $E_{j-\frac 12}$ is a discretization of $E$ at each interface of the mesh.
\begin{remark}
Clearly, if $\phi \equiv 0$, the former model (\ref{eq:tr}) is recovered out of (\ref{eq:kinchemo}), along with its limit (\ref{hit_eq}), being a particular case of (\ref{drift_D}). Accordingly, (\ref{rho-FD}) appears as a restriction of (\ref{SGchemo}) when $E_{j-\frac 12} \equiv 0$. However, the situation $\phi \not \equiv 0$ gives rise to sufficiently strong peculiarities so that we choose, in this paper, to neatly distinguish between both cases.
\end{remark}

As in the previous Section, $V=(-1,1)$, the set $\{\omega_k,v_k\}_k$ is assumed to verify \eqref{eq:restrict}.

\subsection{Asymptotic expansion of eigenvalues}

Let us focus on the eigenvalues of the discrete problem, computed
thanks to the condition in \eqref{eqphilam}, which, for  
$\nu=\frac 12 \sum_{k=-K}^K \omega_k \delta(v-v_k)$, reads
\begin{equation}\label{lambda}
1 = \frac 12 \sum_{k=-K}^K \frac{\omega_k T_\eps(v_k)}{T_\eps(v_k)-\lambda^\eps v_k}.
\end{equation}
Clearly, $\lambda=0$ is a solution.
Thanks to \eqref{eq:restrict}, we deduce from the latter equality that nonzero eigenvalues verify
$$
\sum_{k=-N}^N \frac{\omega_k}{\frac{T_\eps(v_k)}{v_k}-\lambda^\eps} = 0, \quad \mbox{ for } \lambda^\eps\neq 0.
$$
By studying the variations of the left-hand side with respect to $\lambda^\eps$, one deduces the 
existence of exactly $2K-1$ distinct solutions which are interlaced in the following way
\begin{align*}
&\frac{T_\eps(v_{-1})}{v_{-1}} < \lambda_{-K+1}^\eps < \frac{T_\eps(v_{-2})}{v_{-2}} < \ldots < \frac{T_\eps(v_{-K})}{v_{-K}} < \lambda_0^\eps \\
&\qquad <\frac{T_\eps(v_K)}{v_K} < \lambda_1^\eps < \frac{T_\eps(v_{K-1})}{v_{K-1}} < \lambda_2^\eps < \ldots 
< \lambda_{K-1}^\eps < \frac{T_\eps(v_1)}{v_1}.
\end{align*}
The sign of $\lambda_0^\eps$ is given by the sign of 
$\displaystyle \sum_{k=-K}^K \omega_k \frac{v_k}{T_\eps(v_k)}$.

In the following, we always assume that
$\lambda_0^\eps <0$,
the opposite case can be treated in the same way.
Then, the vectors of negative/positive eigenvalues are,
$$
\lambda_-^\eps = (\lambda_{-K+1}^\eps,\ldots,\lambda_{-1}^\eps)^\top,
\qquad 
\lambda_+^\eps = (\lambda_1^\eps,\ldots,\lambda_{K-1}^\eps)^\top.
$$ 
\begin{lemma}\label{lem:lambda}
When $\eps\to 0$, we have 
$\lambda_\ell = \lambda_\ell^0 + \eps \lambda_\ell^1 + o(\eps)$ where 
$$
\lambda_0^0 = 0, \quad \lambda_0^1 = 3 \sum_{k=1}^K \omega_k v_k\phi(v_k\pa_xS).
$$
For $\ell\neq 0$, $\lambda_\ell^0$ are the symmetric $(\lambda^0_{-\ell}=-\lambda^0_\ell)$ eigenvalues of 
$$
\ds 1 = \frac 12 \sum_{k=-K}^K \frac{\omega_k}{1-\lambda^0_\ell v_k},
$$
and
$$
\lambda_\ell^1  \sum_{k=-K}^K \frac{\omega_k v_k}{(1-\lambda^0_\ell v_k)^2}
= \lambda_\ell^0 \sum_{k=-K}^K \frac{\omega_k v_k\phi(v_k\pa_xS)}{(1-\lambda^0_\ell v_k)^2}.
$$
We denote by 
$$
\lambda^0 = (\lambda_1^0,\ldots,\lambda^0_{K-1})^\top
$$
the vector of positive eigenvalues at the limit $\eps\to 0$.
\end{lemma}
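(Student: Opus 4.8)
The plan is to read the dispersion relation (\ref{lambda}) as an implicit equation and to peel off each eigenvalue branch via the implicit function theorem. First I would reduce, exactly as in the discussion preceding the lemma, to the rational dispersion function governing the \emph{nonzero} eigenvalues: writing $\frac{T_\eps(v_k)}{T_\eps(v_k)-\lambda v_k}=1+\frac{\lambda v_k}{T_\eps(v_k)-\lambda v_k}$ and using $\sum_{k=1}^K\omega_k=1$ from (\ref{eq:restrict}), the eigenvalues $\lambda^\eps\neq0$ are precisely the zeros of
$$
h(\lambda,\eps):=\sum_{k=-K}^{K}\frac{\omega_k v_k}{T_\eps(v_k)-\lambda v_k},\qquad T_\eps(v_k)=1+\eps\,\phi(v_k\pa_x S).
$$
Since $\phi(v_k\pa_x S)$ is, at a fixed interface, just a collection of constants, $h$ is a rational — hence locally real-analytic — function of $(\lambda,\eps)$ away from its poles. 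At $\eps=0$, $h(\lambda,0)=0$ is equivalent for $\lambda\neq0$ to $\tfrac12\sum_{k=-K}^K\frac{\omega_k}{1-\lambda v_k}=1$; invariance of this relation under $\lambda\mapsto-\lambda$ together with the relabelling $k\mapsto-k$ (using $v_{-k}=-v_k$, $\omega_{-k}=\omega_k$) yields the claimed symmetry $\lambda^0_{-\ell}=-\lambda^0_\ell$.

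For $\ell\neq0$ I would apply the implicit function theorem at $(\lambda^0_\ell,0)$. The strict monotonicity already used to establish the interlacing — equivalently $\pa_\lambda h(\lambda,0)=\sum_k\frac{\omega_k v_k^2}{(1-\lambda v_k)^2}>0$ — shows $\lambda^0_\ell$ is a simple root, so a unique analytic branch $\lambda^\eps_\ell=\lambda^0_\ell+\eps\lambda^1_\ell+o(\eps)$ exists. Differentiating $h(\lambda^\eps_\ell,\eps)=0$ at $\eps=0$, with $\pa_\eps T_\eps(v_k)|_{\eps=0}=\phi(v_k\pa_x S)$, gives $\lambda^1_\ell\sum_k\frac{\omega_k v_k^2}{(1-\lambda^0_\ell v_k)^2}=\sum_k\frac{\omega_k v_k\phi(v_k\pa_x S)}{(1-\lambda^0_\ell v_k)^2}$. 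To put this in the form printed in the lemma I would then use the elementary identity $\frac{\lambda v_k^2}{(1-\lambda v_k)^2}-\frac{v_k}{(1-\lambda v_k)^2}=-\frac{v_k}{1-\lambda v_k}$, summed against $\omega_k$ and evaluated at $\lambda=\lambda^0_\ell$, where $\sum_k\frac{\omega_k v_k}{1-\lambda^0_\ell v_k}=h(\lambda^0_\ell,0)=0$: this gives $\lambda^0_\ell\sum_k\frac{\omega_k v_k^2}{(1-\lambda^0_\ell v_k)^2}=\sum_k\frac{\omega_k v_k}{(1-\lambda^0_\ell v_k)^2}$, and hence exactly the stated relation for $\lambda^1_\ell$.

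The branch $\ell=0$ is the one point requiring care, because $\lambda=0$ is a permanent root of the \emph{full} relation (\ref{lambda}), while the branch $\lambda^\eps_0$ of the \emph{reduced} relation $h$ only coalesces with it as $\eps\to0$. There I would apply the implicit function theorem to $h$ at $(0,0)$: one has $h(0,0)=\sum_k\omega_k v_k=0$ and, decisively, $\pa_\lambda h(0,0)=\sum_{k=-K}^K\omega_k v_k^2=2\sum_{k=1}^K\omega_k v_k^2=\tfrac23\neq0$, which is exactly where the second-moment normalization in (\ref{eq:restrict}) enters. This produces a unique analytic branch $\lambda^\eps_0$ with $\lambda^0_0=0$; since $\lambda^\eps_0\to0$ lies, for small $\eps$, in the central gap $\big(T_\eps(v_{-K})/v_{-K},\,T_\eps(v_K)/v_K\big)$, it is indeed the root labelled $\lambda^\eps_0$ in the interlacing. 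Differentiating $h(\lambda^\eps_0,\eps)=0$ at $\eps=0$ yields $\tfrac23\lambda^1_0=-\pa_\eps h(0,0)=\sum_{k=-K}^K\omega_k v_k\phi(v_k\pa_x S)$; since $\phi$ is odd one has $v_{-k}\phi(v_{-k}\pa_x S)=v_k\phi(v_k\pa_x S)$, so the right-hand side equals $2\sum_{k=1}^K\omega_k v_k\phi(v_k\pa_x S)$, giving $\lambda^1_0=3\sum_{k=1}^K\omega_k v_k\phi(v_k\pa_x S)$. I expect the main obstacle to be conceptual rather than computational: correctly isolating this emerging branch at $\ell=0$ and recognizing that its non-degeneracy rests on (\ref{eq:restrict}); once $h$ is used consistently, the rest is a routine first-order perturbation.
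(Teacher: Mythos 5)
Your proof is correct, and while the overall strategy (perturbing the dispersion relation in $\eps$) is the same as the paper's, your execution differs in a way worth noting. The paper expands the full relation (\ref{lambda}) directly with the postulated ansatz: for $\ell\neq 0$ this gives the stated formula for $\lambda^1_\ell$ immediately at first order, but for $\ell=0$ the first-order term is degenerate (since $\lambda=0$ is a permanent root of the full relation), so the authors must push to second order, obtain a quadratic equation for $\lambda^1_0$, and discard the null solution by hand. You instead factor out the trivial root first and work with the reduced function $h(\lambda,\eps)=\sum_k \omega_k v_k/(T_\eps(v_k)-\lambda v_k)$, whose roots are simple at $\eps=0$ (including at $\lambda=0$, where $\pa_\lambda h(0,0)=2/3\neq 0$ by (\ref{eq:restrict})); the implicit function theorem then justifies the existence and analyticity of every branch and delivers $\lambda^1_0$ from a single first-order computation. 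This is cleaner and more rigorous on the $\ell=0$ branch, at the modest cost that for $\ell\neq 0$ your raw output $\lambda^1_\ell\sum_k \omega_k v_k^2/(1-\lambda^0_\ell v_k)^2=\sum_k \omega_k v_k\phi(v_k\pa_xS)/(1-\lambda^0_\ell v_k)^2$ must be converted to the lemma's stated form via the identity $\lambda^0_\ell\sum_k \omega_k v_k^2/(1-\lambda^0_\ell v_k)^2=\sum_k \omega_k v_k/(1-\lambda^0_\ell v_k)^2$, which you correctly derive from $h(\lambda^0_\ell,0)=0$; the two forms are equivalent precisely because $\lambda^0_\ell$ is a root of $h(\cdot,0)$. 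Both arguments rely on the same normalizations in (\ref{eq:restrict}) and the oddness of $\phi$ at the same junctures, and your identification of the emerging branch with the root $\lambda^\eps_0$ in the central gap of the interlacing is the right way to make the paper's ``discard the null solution'' step precise.
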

\begin{proof}
Letting $\eps \to 0$ in \eqref{lambda}, eigenvalues $\lambda^0$ are solutions of
$$
1 = \frac 12 \sum_{k=-K}^K \frac{\omega_k}{1-\lambda^0 v_k}.
$$
Symmetry of the set $\{\omega_k,v_k\}_k$ implies that if $\lambda^0$ is  solution, then $-\lambda^0$ is, too. As a consequence $\lambda^0_+ = -\lambda^0_-$, and $\lambda^0_0 = 0$.
Assuming an asymptotic expansion $\lambda_k^\eps = \lambda_k^0 + \eps \lambda_k^1 + o(\eps)$, and expanding the relation \eqref{lambda} with \eqref{eq:Teps}, we get
\begin{align*}
1 & = \frac 12 \sum_{k=-K}^K \frac{\omega_k}{1-\lambda^0 v_k- \eps v_k(\lambda^1-\phi(v_k\pa_xS) \lambda^0)+o(\eps)}  \\
&= \frac 12 \sum_{k=-K}^K \frac{\omega_k}{1-\lambda^0 v_k} \left(1 + \eps \frac{v_k(\lambda^1-\phi(v_k\pa_xS) \lambda^0)}{1-\lambda^0 v_k}+o(\eps)\right).
\end{align*}
Thus, for $\ell\neq 0$, 
$$
\lambda_\ell^1 \sum_{k=-K}^K \frac{\omega_k v_k}{(1-\lambda^0_\ell v_k)^2} = 
\lambda_\ell^0 \sum_{k=-K}^K \frac{\omega_k v_k\phi(v_k\pa_xS)}{(1-\lambda^0_\ell v_k)^2}.
$$
For $\ell=0$, this relation gives $\lambda_0^0 = 0$, forcing us to go at the second order in $\eps$ to compute $\lambda^1_0$: postulating that  $\lambda_0^\eps = \eps \lambda_0^{1}+\eps^2 \lambda_0^2 + \ldots$, it comes
\begin{align*}
1 & = \frac 12 \sum_{k=-K}^K \frac{\omega_k}{1- \eps v_k(\lambda^{1}_0+\eps\lambda^2_0)/(1+\eps\phi(v_k\pa_xS))}  \\
&= \frac 12 \sum_{k=-K}^K \omega_k \left(1 + \eps v_k \frac{\lambda_0^{1}+\eps\lambda_0^2}{1+\eps\phi(v_k\pa_xS)}+\eps^2 v_k^2 (\lambda_0^1)^2 +o(\eps^2)\right).
\end{align*}
We get
$$
0 = \sum_{k=-K}^K \omega_k v_k\lambda_0^1 + \eps \sum_{k=-K}^K \omega_k \Big(-v_k\phi(v_k\pa_xS) \lambda_0^1 + v_k \lambda_0^2 + v_k^2(\lambda_0^1)^2\Big) + o(\eps).
$$
By symmetry of the set $\{\omega_k,v_k\}$, we have $\sum_{k=-K}^K \omega_k v_k=0$.
Then, assumptions (\ref{eq:restrict}) yield two solutions, among which we discard the null one, it gives
$$
\lambda_0^1 = \frac{3}{2} \sum_{k=-K}^K \omega_k v_k\phi(v_k\pa_x S). 
$$
Being $\phi$ an odd function and by symmetry of $\{\omega_k,v_k\}_k$, the claim is proved.
\qed\end{proof}

\subsection{Corresponding scattering $S$-matrix}

A general stationary solution reads, (see the separation of variables in \cite{vincent})
\begin{align*}
\barf^\eps(x,v) &= \sum_{\ell=1}^{K-1} \frac{a_\ell e^{-\lambda_\ell^\eps x/\eps}}{T_\eps(v)-\lambda_\ell^\eps v} 
+ \frac{\overline{a}}{T_\eps(v)} \\
& \qquad + \sum_{\ell=-K+1}^{-1} \frac{a_\ell e^{-\lambda_\ell^\eps (x-\Delta x)/\eps}}{T_\eps(v)-\lambda_\ell^\eps v}
+ a_0 \left(\frac{e^{-\lambda_0^\eps \frac{x}{\eps}}}{T_\eps(v)-\lambda_0^\eps v} - \frac{1}{T_\eps(v)}\right).
\end{align*}•
The spectral component of the ``zero-eigenfunction'' $1/T_\eps(v)$ was split between $\bar a$ and $a_0$. Like for radiative transfer (see \S\ref{sec:SRTE}), where $\phi \equiv 0$, the  $S$-matrix is,
\begin{equation}\label{def:scat}
\calS^\eps = \widetilde{N^\eps} (N^\eps)^{-1}, \qquad \mbox{ (dependent of $j-\frac 1 2$)}
\end{equation}
where the $j-\frac 1 2$ index was dropped for the sake of simplicity of the scripture,
\begin{align*}
N^\eps = 
\begin{pmatrix} 
\ds \frac{1}{T_\eps(\calV) - \calV\otimes \lambda_+} & 
\ds \frac{1}{T_\eps(\calV)} &
\ds \frac{e^{\lambda_-\Delta x/\eps}}{T_\eps(\calV)-\calV\otimes \lambda_-} &
\ds \frac{1}{T_\eps(\calV)-\lambda_0^\eps \calV} - \frac{1}{T_\eps(\calV)}  \\
\ds \frac{e^{-\lambda_+\Delta x/\eps}}{T_\eps(-\calV) + \calV\otimes \lambda_+} &
\ds \frac{1}{T_\eps(-\calV)} &
\ds \frac{1}{T_\eps(-\calV)+\calV\otimes \lambda_-} &
\ds \frac{e^{-\lambda_0^\eps \Delta x/\eps}}{T_\eps(-\calV)+\lambda_0^\eps \calV} - \frac{1}{T_\eps(-\calV)}
\end{pmatrix}, \\
\widetilde{N^\eps} = 
\begin{pmatrix} 
\ds \frac{e^{-\lambda_+\Delta x/\eps}}{T_\eps(\calV) - \calV\otimes \lambda_+} &
\ds  \frac{1}{T_\eps(\calV)} &
\ds \frac{1}{T_\eps(\calV)-\calV\otimes \lambda_-} &
\ds \frac{e^{-\lambda_0^\eps \Delta x/\eps}}{T_\eps(\calV)-\lambda_0^\eps \calV} - \frac{1}{T_\eps(\calV)} \\
\ds \frac{1}{T_\eps(-\calV) + \calV\otimes \lambda_+} & 
\ds \frac{1}{T_\eps(-\calV)} & 
\ds \frac{e^{\lambda_-\Delta x/\eps}}{T_\eps(-\calV)+\calV\otimes \lambda_-}  & 
\ds \frac{1}{T_\eps(-\calV)+\lambda_0^\eps \calV} - \frac{1}{T_\eps(-\calV)}
\end{pmatrix}.
\end{align*}
The main differences with radiative transfer, as in \S\ref{sec:4}, are
that $0$ is a simple eigenvalue but becomes a double eigenvalue when $\eps\to 0$, and 
that all above quantities depend on the index $j$, which lead to more intricate computations.

\subsection{Decomposition of the scattering matrix}

We want to compute the expansion \eqref{decompS} for this scattering matrix. 
\begin{lemma}\label{lem:scatdev}
The scattering matrix for the kinetic model for chemotaxis, defined in \eqref{def:scat} admits the following asymptotic expansion in $\eps$,
\begin{equation}\label{eq:scatdev}
\fbox{$\calS^\eps = \begin{pmatrix} 
\mathbf{0}_K & \mathbf{I}_K - \zeta^0 \gamma  \\
\mathbf{I}_K - \zeta^0 \gamma & \mathbf{0}_K
\end{pmatrix}
+ \eps B^\eps, \quad 
B^\eps = \frac{1}{\eps}(A^\eps (N^\eps)^{-1} - A^0 (N^0)^{-1})$,}
\end{equation}
where $A^\eps=\widetilde{N^\eps} - \begin{pmatrix} \mathbf{0}_K & \mathbf{I}_K \\ \mathbf{I}_K & \mathbf{0}_K \end{pmatrix} N^\eps$,
and where the matrices $\gamma\in \calM_{K-1\times K}(\RR)$ and
$\zeta^0\in \calM_{K\times K-1}(\RR)$ satisfy,
\begin{align}
\label{def:gam}
&\gamma \frac{1}{1-\calV\otimes \lambda^0} = \mathbf{I}_{K-1}, \qquad
\gamma \mathbf{1}_{\RR^K} = \mathbf{0}_{\RR^{K-1}}, \\
\label{def:zeta0}
&\zeta^0 = \frac{1}{1-\calV\otimes \lambda^0} - \frac{1}{1+\calV\otimes \lambda^0}.
\end{align}
\end{lemma}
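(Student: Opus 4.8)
The proof follows, step for step, the one of Proposition \ref{lem:scatrad} for radiative transfer; the novelties are that the eigenvalues now depend on $\eps$ through $T_\eps$, that $0$ is only a \emph{simple} eigenvalue of the $\eps$-problem (becoming double at $\eps=0$), and that every quantity carries a $j-\tfrac12$ index. First I would use the very definition of $A^\eps$ to write $\calS^\eps = \begin{pmatrix}\mathbf{0}_K & \mathbf{I}_K\\ \mathbf{I}_K & \mathbf{0}_K\end{pmatrix} + A^\eps (N^\eps)^{-1}$ with $A^\eps = \widetilde{N^\eps} - \begin{pmatrix}\mathbf{0}_K & \mathbf{I}_K\\ \mathbf{I}_K & \mathbf{0}_K\end{pmatrix}N^\eps$, and then expand $A^\eps$ column-block by column-block. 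In the two damped-mode blocks (the columns attached to $\lambda_+^\eps$ for the lower block-row, to $\lambda_-^\eps$ for the upper one) the Knudsen weights $e^{\mp\lambda_\pm^\eps\Delta x/\eps}$ appear only on the swapped side, so each such block equals, up to exponentially small terms, $\tfrac{1}{T_\eps(-\calV)\pm\calV\otimes\lambda_\pm^\eps}-\tfrac{1}{T_\eps(\calV)\mp\calV\otimes\lambda_\pm^\eps}$; the Maxwellian block $\tfrac{1}{T_\eps(\pm\calV)}$ contributes $\tfrac{1}{T_\eps(\calV)}-\tfrac{1}{T_\eps(-\calV)}$, which is $O(\eps)$ because $\phi$ is odd (so $T_\eps(-v)=1-\eps\phi(v\pa_xS)$); the $a_0$-mode block is treated in the next paragraph. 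By Lemma \ref{lem:lambda}, $\lambda_\pm^\eps\to\pm\lambda^0$ and $T_\eps\to 1$ as $\eps\to 0$, so the damped blocks converge to $-\zeta^0$ of \eqref{def:zeta0} and
$$A^\eps\ \underset{\eps\to 0}{\longrightarrow}\ A^0 = \begin{pmatrix}\mathbf{0}_{K\times(K-1)} & \mathbf{0}_{\RR^K} & -\zeta^0 & \mathbf{0}_{\RR^K}\\ -\zeta^0 & \mathbf{0}_{\RR^K} & \mathbf{0}_{K\times(K-1)} & \mathbf{0}_{\RR^K}\end{pmatrix},$$
the column reversal induced by $\lambda_{-\ell}^0=-\lambda_\ell^0$ being kept implicit (it is cancelled below by the same reversal inside $(N^0)^{-1}$).

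The delicate point is the $a_0$-column, i.e. the mode $\tfrac{e^{-\lambda_0^\eps x/\eps}}{T_\eps(v)-\lambda_0^\eps v}-\tfrac{1}{T_\eps(v)}$. Since $\lambda_0^\eps=\eps\lambda_0^1+o(\eps)$ by Lemma \ref{lem:lambda}, one has $e^{-\lambda_0^\eps x/\eps}\to e^{-\lambda_0^1 x}$ while $\tfrac{1}{T_\eps(v)-\lambda_0^\eps v}-\tfrac{1}{T_\eps(v)}=O(\eps)$, so this mode tends to the $v$-independent function $e^{-\lambda_0^1 x}-1$, which \emph{vanishes at $x=0$} but equals $(e^{-\lambda_0^1\Delta x}-1)\mathbf{1}_{\RR^K}\ne 0$ at $x=\Delta x$ (the scalar $e^{-\lambda_0^1\Delta x}-1$ is nonzero whenever $\lambda_0^1\ne 0$, i.e. $E_{j-\frac12}\ne 0$, the generic drift-dominated case, $\lambda_0^1=3E_{j-\frac12}$, which is exactly what produces the $e^{3E\Delta x}$ appearing in \eqref{SGchemo}). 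Together with the exponential decay of the off-diagonal damped blocks this gives
$$N^\eps\ \underset{\eps\to 0}{\longrightarrow}\ N^0 = \begin{pmatrix} N_{01} & \mathbf{0}_K\\ (\mathbf{0}_{K\times(K-1)}\ \ \mathbf{1}_{\RR^K}) & N_{02}\end{pmatrix},\qquad N_{01}=\Big(\tfrac{1}{1-\calV\otimes\lambda^0}\ \ \mathbf{1}_{\RR^K}\Big),\quad N_{02}=\Big(\tfrac{1}{1-\calV\otimes\lambda^0}\ \ (e^{-\lambda_0^1\Delta x}-1)\mathbf{1}_{\RR^K}\Big),$$
which has exactly the block-triangular shape of $M_0$ in Proposition \ref{lem:scatrad}. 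Invertibility of $N_{01},N_{02}$ follows from the Haar/Chebyshev $T$-system property of the Cauchy-type families involved (Proposition \ref{prop-zeta-gamma} in the Appendix), and inverting by blocks yields $N_{01}^{-1}=\binom{\gamma}{\beta^\top}$ and $N_{02}^{-1}=\binom{\gamma}{(e^{-\lambda_0^1\Delta x}-1)^{-1}\beta^\top}$ with $\gamma,\beta$ characterized by \eqref{def:gam} together with $\beta^\top\tfrac{1}{1-\calV\otimes\lambda^0}=\mathbf{0}_{\RR^{K-1}}^\top$, $\beta^\top\mathbf{1}_{\RR^K}=1$.

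It then remains to multiply $A^0(N^0)^{-1}$. Since $A^0$ only reads off the $\lambda_\mp$-blocks of the coefficient vector, and those are recovered by $\gamma$ applied to the appropriate half of the boundary data, the off-diagonal coupling term $-\gamma(\mathbf{0}_{K\times(K-1)}\ \mathbf{1}_{\RR^K})N_{01}^{-1}=-\gamma\mathbf{1}_{\RR^K}\beta^\top$ drops out by $\gamma\mathbf{1}_{\RR^K}=\mathbf{0}_{\RR^{K-1}}$, whence $A^0(N^0)^{-1}=\begin{pmatrix}\mathbf{0}_K & -\zeta^0\gamma\\ -\zeta^0\gamma & \mathbf{0}_K\end{pmatrix}$ and therefore $\calS^0=\begin{pmatrix}\mathbf{0}_K & \mathbf{I}_K-\zeta^0\gamma\\ \mathbf{I}_K-\zeta^0\gamma & \mathbf{0}_K\end{pmatrix}$. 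As the constant part $\begin{pmatrix}\mathbf{0}_K & \mathbf{I}_K\\ \mathbf{I}_K & \mathbf{0}_K\end{pmatrix}$ cancels in $\calS^\eps-\calS^0$, we obtain $\calS^\eps-\calS^0=A^\eps(N^\eps)^{-1}-A^0(N^0)^{-1}$, and the column-wise estimates above (each difference being $O(\eps)$ up to an exponentially small remainder) show this is $O(\eps)$, so $B^\eps:=\tfrac1\eps\big(A^\eps(N^\eps)^{-1}-A^0(N^0)^{-1}\big)$ is uniformly bounded and \eqref{eq:scatdev} follows. The hard part is the bookkeeping around the degenerate zero eigenvalue — tracking that the limiting $a_0$-column is $e^{-\lambda_0^1 x}-1$ (not a linear function of $x$, as in radiative transfer), whence the asymmetry of the two block-rows of $N^0$ and the $(e^{-\lambda_0^1\Delta x}-1)^{-1}$ factor in $N_{02}^{-1}$ — together with carrying the $j-\tfrac12$ dependence and the negative-eigenvalue reversal consistently through the block inversion, and securing, from the approximation-theoretic results of the Appendix, the invertibility of $N^\eps$ and $N^0$, which is not automatic because the underlying exponential-polynomial families need not be $T$-systems.
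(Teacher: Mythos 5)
Your proof follows the paper's own argument essentially verbatim: the same splitting $\calS^\eps=\bigl(\begin{smallmatrix}\mathbf{0}&\mathbf{I}\\ \mathbf{I}&\mathbf{0}\end{smallmatrix}\bigr)+A^\eps(N^\eps)^{-1}$, the same identification of the limiting $a_0$-column as $(e^{-\lambda_0^1\Delta x}-1)\mathbf{1}_{\RR^K}$ producing the block-triangular $N^0$, the same block inversion with $\gamma,\beta$, and the same cancellation via $\gamma\mathbf{1}_{\RR^K}=\mathbf{0}$ yielding $A^0(N^0)^{-1}=\bigl(\begin{smallmatrix}\mathbf{0}&-\zeta^0\gamma\\ -\zeta^0\gamma&\mathbf{0}\end{smallmatrix}\bigr)$. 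The only slip is peripheral: the invertibility here rests on the Cauchy-kernel family $\frac{1}{1-\lambda v}$ being a Markov/$T$-system (Proposition \ref{prop-zeta-gamma}), and the exponential-polynomial caveat you invoke is relevant only to the Vlasov--Fokker--Planck case of \S\ref{sec:6}, not to this chemotaxis model.
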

\begin{remark}
Lemma \ref{lem:scatdev} is the equivalent of Lemma \ref{lem:scatrad} in the case of the radiative transfert equation. 
The existence of $\gamma$ is also provided by Proposition \ref{prop-zeta-gamma} (iii) in Appendix.
\end{remark}
\begin{proof}
The index $j-1/2$ is again dropped since there is no possible confusion.
\begin{enumerate}
\item When $\eps\to 0$, being $T_\eps$ given in \eqref{eq:Teps} and using Lemma \ref{lem:lambda}
$$
\frac{1}{T_\eps(\calV)-\lambda_0^\eps \calV} - \frac{1}{T_\eps(\calV)} \to 0,
\qquad
\frac{e^{-\lambda_0^\eps \Delta x/\eps}}{T_\eps(\calV)-\lambda_0^\eps \calV} - \frac{1}{T_\eps(\calV)} \to e^{-\lambda^1_0 \Delta x}-1.
$$
so that $\calS^\eps \to \calS^0$, where $\calS^0 = \widetilde{N^0} (N^0)^{-1}$. Since $\lambda^0:=\lambda_+^0=-\lambda_-^0$,
\begin{align*}
N^0 = \begin{pmatrix}
\frac{1}{1 - \calV\otimes \lambda^0} & \mathbf{1}_{\RR^K} & \mathbf{0}_K \\
\mathbf{0}_{K\times K-1} & \mathbf{1}_{\RR^K} & \Big(\frac{1}{1-\lambda^0\otimes \calV} \quad (e^{-\lambda^1_0 \Delta x}-1)\mathbf{1}_{\RR^K}\Big)
\end{pmatrix}, \\
\widetilde{N^0} = \begin{pmatrix}
\mathbf{0}_{K\times K-1} & \mathbf{1}_{\RR^K} & 
\Big(\frac{1}{1 + \calV\otimes \lambda^0} \quad (e^{-\lambda^1_0 \Delta x}-1)\mathbf{1}_{\RR^K} \Big)\\
\frac{1}{1+\lambda^0\otimes \calV} & \mathbf{1}_{\RR^K} & \mathbf{0}_K
\end{pmatrix}.
\end{align*}
\item Using $\gamma$ defined in \eqref{def:gam}, we define also 
$\beta \in \RR^K$ such that
\begin{equation}
\label{def:bet}
\beta^\top \frac{1}{1-\calV\otimes \lambda^0} = \mathbf{0}_{\RR^K}^\top, \qquad
\beta^\top \mathbf{1}_{\RR^K} = 1.
\end{equation}
Then, we may write
\begin{align*}
(N^0)^{-1} = &
\begin{pmatrix}
\begin{pmatrix} \gamma\\ \beta^\top \end{pmatrix} & \mathbf{0}_K  \\
- \begin{pmatrix} \gamma\\ \frac{1}{e^{-\lambda_0^1\Delta x}-1}\beta^\top \end{pmatrix}\Big(\mathbf{0}_{K\times K-1} \quad \mathbf{1}_{\RR^K}\Big) \begin{pmatrix} \gamma \\ \beta^\top \end{pmatrix} &
\begin{pmatrix} \gamma\\ \frac{1}{e^{-\lambda_0^1\Delta x}-1}\beta^\top \end{pmatrix}
\end{pmatrix}  \\
= &\begin{pmatrix}
\begin{pmatrix} \gamma\\ \beta^\top \end{pmatrix} & \mathbf{0}_K  \\
- \begin{pmatrix} \mathbf{0}_{K-1\times K} \\ \frac{1}{e^{-\lambda_0^1\Delta x}-1}\beta^\top \end{pmatrix} &
\begin{pmatrix} \gamma\\ \frac{1}{e^{-\lambda_0^1\Delta x}-1}\beta^\top \end{pmatrix}
\end{pmatrix}.
\end{align*}
By definition, 
$$
A^\eps = \widetilde{N^\eps} - \begin{pmatrix} \mathbf{0}_K & \mathbf{I}_K \\ \mathbf{I}_K & \mathbf{0}_K \end{pmatrix} N^\eps,
\quad \mbox{ and so } \quad 
\calS^\eps = \begin{pmatrix}  \mathbf{0}_K & \mathbf{I}_K \\ \mathbf{I}_K & \mathbf{0}_K \end{pmatrix} + A^\eps (N^\eps)^{-1}.
$$
\item Let us denote
\begin{align*}
&\zeta_\pm^\eps = \frac{\pm 1}{T_\eps(\calV)-\calV\otimes\lambda_\pm^\eps} 
- \frac{\pm 1}{T_\eps(-\calV)+\calV\otimes\lambda_\pm^\eps}\in \calM_{K\times K-1}(\RR), \\
&\zeta_0^\eps = \frac{1}{T_\eps(\calV)-\lambda_0^\eps\calV} - \frac{1}{T_\eps(-\calV)+\lambda_0^\eps\calV} \in \RR^K.
\end{align*}
As $\eps\to 0$, we obtain the limit
$$
\zeta_\pm^\eps \to \zeta^0 = \frac{1}{1-\calV\otimes \lambda^0} - \frac{1}{1+\calV\otimes \lambda^0}, \quad \mbox{ and } \quad
\zeta_0^\eps \to 0.
$$
From the expression,
\begin{equation}\label{eq:Aeps}
A^\eps = \begin{pmatrix}
e^{-\lambda_+ \Delta x /\eps}\zeta_+^\eps & \frac{1}{T_\eps(\calV)} - \frac{1}{T_\eps(-\calV)} & -\zeta_-^\eps
& e^{-\lambda_0^\eps\Delta x/\eps}\zeta_0^\eps - \frac{1}{T_\eps(\calV)}+\frac{1}{T_\eps(-\calV)}  \\
- \zeta_+^\eps & \frac{1}{T_\eps(-\calV)} - \frac{1}{T_\eps(\calV)} & e^{\lambda_-\Delta x/\eps}\zeta_-^\eps & -\zeta_0^\eps+\frac{1}{T_\eps(\calV)}-\frac{1}{T_\eps(-\calV)}
\end{pmatrix}.
\end{equation}
in the limit $\eps\to 0$, we get
\begin{equation}\label{eq:A0N0}
A^0 = \begin{pmatrix}
\mathbf{0}_K & \Big(-\zeta^0 \quad \mathbf{0}_{\RR^K}\Big) 
\\ \Big(-\zeta^0 \quad \mathbf{0}_{\RR^K}\Big) & \mathbf{0}_K
\end{pmatrix}, \qquad 
A^0(N^0)^{-1} = \begin{pmatrix} 
\mathbf{0}_K & -\zeta^0 \gamma  \\
- \zeta^0 \gamma & \mathbf{0}_K
\end{pmatrix}.
\end{equation}
Thus we reach decomposition \eqref{eq:scatdev}.
\end{enumerate}
\qed\end{proof}

\subsection{Emergence of an asymptotic scheme}

We deduce the final scheme from \eqref{schemeAPWB}, which mostly reads as \eqref{scheme1:tr},
\begin{align}
\begin{pmatrix} f_j^{n+1}(\calV) \\ f_{j-1}^{n+1}(-\calV) \end{pmatrix} 
&+\frac{\Delta t}{\eps\Delta x}\VV 
\begin{pmatrix} f_j^{n+1}(\calV)-(\mathbf{I}_K-\zeta^0_{j-\frac 12}\gamma_{j-\frac 12}) f_j^{n+1}(-\calV) \\ 
f_{j-1}^{n+1}(-\calV)-(\mathbf{I}_K-\zeta^0_{j-\frac 12}\gamma_{j-\frac 12}) f_{j-1}^{n+1}(\calV) \end{pmatrix}   \nonumber \\
&=
\begin{pmatrix} f_j^{n}(\calV) \\ f_{j-1}^{n}(-\calV) \end{pmatrix}
+\frac{\Delta t}{\Delta x} \VV B^\eps_{j-\frac 12}
\begin{pmatrix} f_{j-1}^{n}(\calV) \\ f_j^{n}(-\calV) \end{pmatrix}.
\label{scheme1:chemo}
\end{align}
Denoting $B^\eps = \begin{pmatrix} B^{1\eps} & B^{2\eps} \\ B^{3\eps} & B^{4\eps} \end{pmatrix}$,
we may rewrite \eqref{scheme1:chemo} as
\begin{equation}\label{scheme2:chemo}
\frac{1}{\eps}\calH^\eps_j 
\begin{pmatrix} f_j^{n+1}(\calV) \\ f_j^{n+1}(-\calV) \end{pmatrix} =
\begin{pmatrix} f_j^{n}(\calV)  \\ f_{j}^n(-\calV)) \end{pmatrix}
+ \frac{\Delta t}{\Delta x} \VV
\begin{pmatrix} B^{1\eps}_{j-\frac 12} f_{j-1}^n(\calV)+B^{2\eps}_{j-\frac 12} f_j^n(-\calV) \\
B^{3\eps}_{j+\frac 12} f_{j}^n(\calV)+B^{4\eps}_{j+\frac 12} f_{j+1}^n(-\calV) \end{pmatrix},
\end{equation}
where
\begin{equation}\label{Reps:chemo}
\calH^\eps_j = \eps \mathbf{I}_{2K} + \frac{\Delta t}{\Delta x}\VV
\begin{pmatrix} \mathbf{I}_K & \zeta^0_{j-\frac 12}\gamma_{j-\frac 12}-\mathbf{I}_K \\ 
\zeta^0_{j+\frac 12}\gamma_{j+\frac 12}-\mathbf{I}_K & \mathbf{I}_K \end{pmatrix}.
\end{equation}
After inversion of the matrix $\calH_\eps$, 
by construction, this scheme satisfies both the well-balanced property and is asymptotic preserving.

Our main result for the Othmer-Alt model for chemotaxis \eqref{eq:kinchemo} reads:
\begin{theorem}\label{th:chemo}
The scheme \eqref{scheme2:chemo} for the chemotaxis model \eqref{eq:kinchemo} is
well-balanced and uniformly accurate (AP) with respect to $\eps$. 
Moreover, when $\eps\to 0$, the macroscopic density 
$\displaystyle\rho_j^n:=\sum_{k=-K}^K \omega_k f_j^n(v_k)$
satisfies the Sharfetter-Gummel discretization \eqref{SGchemo}, where 
$\displaystyle E_{j-\frac 12}:=\sum_{k=1}^K \omega_k v_k \phi(v_k \pa_xS_{j-\frac 12})$.
\end{theorem}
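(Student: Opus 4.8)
The proof mirrors that of Theorem~\ref{th:RTE}, the extra technical burden coming from the $j$-dependence of the eigenvalue sets $\lambda^\eps_{j\pm 1/2}$ and from the fact that $\lambda_0^\eps$ is a \emph{simple} eigenvalue which degenerates to a \emph{double} zero eigenvalue as $\eps\to0$ (cf.\ Lemma~\ref{lem:lambda}). The well-balanced and uniform-accuracy (AP) statements hold by construction: scheme \eqref{scheme2:chemo} is built from the exact scattering matrix \eqref{def:scat}, hence preserves discrete stationary states, while the splitting \eqref{eq:scatdev} isolates the $\eps$-independent part $\calS^0$, treated implicitly in \eqref{scheme2:chemo}, so that the $\eps\to0$ limit of the scheme is well defined and consistent uniformly in $\Delta x$. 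It remains to identify this limit.

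The plan is to insert a Hilbert expansion $f=f^0+\eps f^1+o(\eps)$ into \eqref{scheme2:chemo}, using $\calH^\eps_j=\calH^0_j+\eps\,\mathbf I_{2K}$ from \eqref{Reps:chemo} and $B^\eps=B^0+o(1)$. Matching powers of $\eps$ yields
\[
\calH^0_j\,\{f^0\}_j^{n+1}=0,\qquad
\calH^0_j\,\{f^1\}_j^{n+1}=\{f^0\}_j^n-\{f^0\}_j^{n+1}+\tfrac{\Delta t}{\Delta x}\VV\big(B^0\text{ acting on }\{f^0\}^n\big),
\]
the exact analogues of \eqref{eqf0}--\eqref{eqf1}. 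The kernel and range of $\calH^0_j$ are given by Lemma~\ref{Hzero} of the Appendix, now applied with the \emph{two} eigenvalue sets $\lambda=\lambda^0_{j-1/2}$, $\mu=\lambda^0_{j+1/2}$: since $T_\eps\to1$ at leading order, relation \eqref{ortho0discret} holds for both sets (thanks to \eqref{eq:restrict}), so $\mathrm{Ker}(\calH^0_j)=\mathrm{Span}(\mathbf 1_{\RR^{2K}})$ and $\mathrm{Im}(\calH^0_j)=\{(Z_1,Z_2)^\top:\ \sum_k\omega_k({Z_1}_k+{Z_2}_k)=0\}$. From the first identity one gets $\{f^0\}_j^{n+1}(\pm\calV)=\tfrac12\rho_j^{n+1}\mathbf 1_{\RR^K}$, the discrete Maxwellian, with $\rho_j^n=\sum_k\omega_k f_j^n(v_k)$.

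Next comes the core computation: an explicit expression for $B^0=\lim_{\eps\to0}B^\eps$, obtained by expanding $A^\eps(N^\eps)^{-1}$ to first order in $\eps$ in \eqref{eq:scatdev}, the chemotaxis counterpart of the computation of $B_0$ carried out for radiative transfer. One must retain the branch associated with $\lambda_0^\eps$, whose eigenvector carries the factor $e^{-\lambda_0^\eps\Delta x/\eps}-1\to e^{-\lambda_0^1\Delta x}-1$ with $\lambda_0^1=3E_{j-1/2}$ (Lemma~\ref{lem:lambda}), together with the drift corrections $\lambda_\ell^1$ to the remaining eigenvalues, whereas the truly exponentially small terms $\eps^{-1}e^{-\lambda_\pm\Delta x/\eps}$ disappear. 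Substituting the equilibrium $\{f^0\}^{n+1}=\tfrac12\rho^{n+1}\mathbf 1$ into the $\eps^0$-equation and imposing the solvability (zeroth-moment) condition attached to $\mathrm{Im}(\calH^0_j)$, the telescoping structure of $\calH^0_j$ should produce $\rho_j^{n+1}=\rho_j^n+\tfrac{\Delta t}{\Delta x}(\bar\calJ_{j-1/2}^n-\bar\calJ_{j+1/2}^n)$ with an interface current depending only on $\rho_{j-1}^n$, $\rho_j^n$ and $E_{j-1/2}=\sum_{k=1}^K\omega_k v_k\phi(v_k\,\pa_x S_{j-1/2})$; identifying the constants against \eqref{SGchemo} then closes the argument.

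The main obstacle is precisely this explicit evaluation of $B^0$ and the verification that the zeroth moment of the order-$\eps^0$ equation collapses \emph{exactly} to the Scharfetter--Gummel current $\bar\calJ_{j-1/2}^n=E_{j-1/2}\dfrac{e^{3E_{j-1/2}\Delta x}\rho_j^n-\rho_{j-1}^n}{1-e^{3E_{j-1/2}\Delta x}}$ rather than some spurious exponential-fitting expression. The factors $\zeta^0_{j\pm1/2}\gamma_{j\pm1/2}$, the normalization of the degenerate $\lambda_0$-eigenvector, and the $j$-dependence of the two interface eigenvalue sets all enter the bookkeeping, and one has to check that they conspire so that the resulting current is a single constant per cell $(x_{j-1},x_j)$ — the discrete shadow of the stationary current of \eqref{drift_D} being constant in the continuous derivation of the exponential fit \eqref{SGIntro}--\eqref{FluxSGIntro}.
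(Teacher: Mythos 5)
Your outline follows the paper's proof step for step: the same Hilbert expansion into \eqref{scheme2:chemo}, the same identification of $\mathrm{Ker}(\calH^0_j)$ and $\mathrm{Im}(\calH^0_j)$ via Lemma \ref{Hzero} (which is indeed stated for two distinct interface eigenvalue sets, so your invocation with $\lambda^0_{j-1/2}$ and $\lambda^0_{j+1/2}$ is legitimate), the same reduction of $\{f^0\}$ to the discrete Maxwellian, and the same solvability condition on the order-$\eps^0$ equation. Up to that point everything you write is correct and matches the paper.

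However, there is a genuine gap, and you name it yourself: the explicit evaluation of $B^0=\lim_{\eps\to 0}B^\eps$ and the verification that the zeroth moment collapses to the Scharfetter--Gummel flux are never carried out --- you only assert that the terms ``should conspire''. This is precisely where the paper does its real work (Lemma \ref{lem:limB} and the computation that follows). Two things must actually be checked. First, $B^0$ is obtained by splitting $B^\eps=\frac1\eps(A_\eps-A_0)N_\eps^{-1}+\frac1\eps(A_0-A_0N_0^{-1}N_\eps)N_\eps^{-1}$ and expanding $\zeta_\pm^\eps$, $\zeta_0^\eps$, $1/T_\eps(\pm\calV)$ and the eigenvalues to first order; the resulting blocks $B^{10},\dots,B^{40}$ contain, besides the $\beta^\top$ terms, contributions proportional to $\big((\zeta^0\gamma-1)\frac{\cdot}{(1-\calV\otimes\lambda^0)^2}+\frac{\cdot}{(1+\calV\otimes\lambda^0)^2}\big)\gamma$ involving the corrections $\lambda^1_\pm$. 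Second, and crucially, when these blocks act on the Maxwellian $\mathbf 1_{\RR^K}$, all of those extra contributions are annihilated by $\gamma\,\mathbf 1_{\RR^K}=\mathbf 0$, and only the terms carrying $\beta^\top\mathbf 1_{\RR^K}=1$ with prefactors $\frac{e^{-\lambda_0^1\Delta x}}{e^{-\lambda_0^1\Delta x}-1}$, $\frac{1}{e^{-\lambda_0^1\Delta x}-1}$, etc.\ survive; combined with $\sum_k\omega_k v_k(\zeta^0\gamma)_{k\ell}=0$, $\sum_k\omega_kv_k^2=\frac13$ and $\lambda_0^1=3E_{j-1/2}$ from Lemma \ref{lem:lambda}, this is what yields exactly the flux of \eqref{SGchemo}. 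Without performing this cancellation the ``single constant current per cell'' remains a conjecture inside your argument, so the proof is incomplete rather than wrong.
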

The proof of this Theorem is done in the next subsection.

\subsection{Consistency with the limit $\eps\to 0$}

We first state the following technical Lemma (dropping the subscript $j-\frac 12$ since there is no possible confusion):
\begin{lemma}\label{lem:limB}
When $\eps$ goes to $0$, 
$$
B^\eps = \begin{pmatrix} B^{1\eps} & B^{2\eps} \\ B^{3\eps} & B^{4\eps} \end{pmatrix} \to 
B^0 = \begin{pmatrix} B^{10} & B^{20} \\ B^{30} & B^{40} \end{pmatrix},
$$
where
$$
\left\{\begin{array}{rcl}
B^{10} &=& \ds -\frac{e^{-\lambda_0^1\Delta x}}{e^{-\lambda_0^1\Delta x}-1}(2-\zeta^0\gamma)\phi(\calV\pa_xS)\beta^\top; \\
B^{20} &=& \ds \left((\zeta^0\gamma - 1)\frac{\phi(\calV\pa_xS)-\calV\otimes \lambda_-^1}{(1-\calV\otimes \lambda^0)^2}
+\frac{\phi(\calV\pa_xS)-\calV\otimes\lambda_-^1}{(1+\calV\otimes \lambda^0)^2}\right)\gamma \\
&& \ds \qquad + \frac{1}{e^{-\lambda_0^1\Delta x}-1}(2-\zeta^0\gamma)\phi(\calV\pa_xS)\beta^\top; \\
B^{30} &=& \ds \left((\zeta^0\gamma - 1)\frac{-\phi(\calV\pa_xS)+\calV\otimes\lambda_+^1}{(1-\calV\otimes \lambda^0)^2}
+\frac{-\phi(\calV\pa_xS)+\calV\otimes\lambda_+^1}{(1+\calV\otimes \lambda^0)^2}\right)\gamma  \\
&& \ds \qquad + (2-\zeta^0\gamma)\phi(\calV\pa_xS)\beta^\top +\frac{1}{e^{-\lambda_0^1\Delta x}-1}(2-\zeta^0\gamma)\lambda_0^1\calV\beta^\top; \\
B^{40} &=& -\ds \frac{1}{e^{-\lambda_0^1\Delta x}-1}(2-\zeta^0\gamma)\lambda_0^1\calV\beta^\top;
\end{array}\right.
$$
with $\gamma$, $\zeta^0$, and $\beta$ defined in \eqref{def:gam}, \eqref{def:zeta0}, and \eqref{def:bet} respectively.
\end{lemma}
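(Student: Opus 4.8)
The statement is a computation of the $\eps\to 0$ limit of the blocks of $B^\eps$, where by definition (\ref{eq:scatdev}) we have $B^\eps = \frac1\eps\big(A^\eps (N^\eps)^{-1} - A^0 (N^0)^{-1}\big)$. The strategy mirrors the proof of the analogous Lemma for radiative transfer (the computation of $B_\eps$ in \S\ref{sec:SRTE}): write $B^\eps = \frac1\eps\big(A^\eps - A^0 (N^0)^{-1} N^\eps\big)(N^\eps)^{-1}$, split the bracket as $\frac1\eps(A^\eps - A^0)(N^\eps)^{-1} + \frac1\eps A^0\big((N^0)^{-1} - (N^\eps)^{-1}N^0(N^\eps)^{-1}N^\eps\big)\cdots$ — more cleanly, $B^\eps = \frac1\eps(A^\eps-A^0)(N^\eps)^{-1} + A^0(N^0)^{-1}\cdot\frac1\eps(N^0 - N^\eps)(N^\eps)^{-1}$, and pass to the limit term by term. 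The key inputs are: (i) the asymptotic expansions of the eigenvalues from Lemma \ref{lem:lambda}, in particular $\lambda_\ell^\eps = \lambda_\ell^0 + \eps\lambda_\ell^1 + o(\eps)$ with $\lambda_0^0=0$ and the explicit $\lambda_0^1$, $\lambda_\ell^1$; (ii) the expansion of $T_\eps$ from (\ref{eq:Teps}), giving $\frac{1}{T_\eps(\pm\calV)} = \frac{1}{1\pm\eps\phi(\calV\pa_xS)+\cdots}$ and hence $\frac1\eps\big(\frac{1}{T_\eps(-\calV)}-\frac{1}{T_\eps(\calV)}\big)\to 2\phi(\calV\pa_xS)$; (iii) the limiting forms of $A^0$, $(N^0)^{-1}$, $\zeta^0$ already recorded in Lemma \ref{lem:scatdev}.

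\textbf{Step 1: expand $A^\eps$.} Starting from the explicit expression (\ref{eq:Aeps}) for $A^\eps$, compute $\frac1\eps(A^\eps - A^0)$ column block by column block. The first block column carries the factor $e^{-\lambda_+\Delta x/\eps}$, which together with its $\frac1\eps$ prefactor still tends to $0$ (exponentially small beats polynomially large), so it contributes nothing in the limit; likewise $e^{\lambda_-\Delta x/\eps}\to 0$. The surviving contributions come from: the middle block $\frac1\eps\big(\frac{1}{T_\eps(\mp\calV)}-\frac{1}{T_\eps(\pm\calV)}\big)\to \mp 2\phi(\calV\pa_xS)$; the $\zeta_\pm^\eps$ block, where one needs the first-order expansion of $\zeta_\pm^\eps$ around $\zeta^0$, i.e. differentiating $\frac{1}{1\pm\eps\phi - v\lambda_\pm^\eps}$ in $\eps$ and using $\lambda_\pm^\eps = \lambda^0 + \eps\lambda_\pm^1+\cdots$; and the last ``zero-eigenvalue'' block, where $e^{-\lambda_0^\eps\Delta x/\eps} = e^{-\lambda_0^1\Delta x} + O(\eps)$ (since $\lambda_0^\eps=\eps\lambda_0^1+\cdots$) and $\zeta_0^\eps = O(\eps)$, so one must track $\frac1\eps\zeta_0^\eps$ and the derivative of the exponential. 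This is where the terms $\frac{\phi(\calV\pa_xS)-\calV\otimes\lambda_\mp^1}{(1\mp\calV\otimes\lambda^0)^2}$ and the $\lambda_0^1\calV$ pieces are born.

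\textbf{Step 2: expand $(N^\eps)^{-1}$ and $(N^0)^{-1}N^\eps$.} We already know $(N^0)^{-1}$ from Lemma \ref{lem:scatdev}; we need $\frac1\eps(N^0-N^\eps)$, again block by block using the same expansions as in Step 1 (plus the exponentially-small entries of $N^\eps$ that vanish). Then $A^0(N^0)^{-1}\cdot\frac1\eps(N^0-N^\eps)(N^\eps)^{-1}$ is assembled; note $A^0(N^0)^{-1}$ is the off-diagonal block matrix $\mathrm{diag}$-anti with entries $-\zeta^0\gamma$ from (\ref{eq:A0N0}), which multiplies the correction. Finally, combine with the $\frac1\eps(A^\eps-A^0)(N^\eps)^{-1}$ piece, let $\eps\to 0$ (using $(N^\eps)^{-1}\to(N^0)^{-1}$), collect the four blocks and match against the claimed $B^{10},\dots,B^{40}$; the idempotent-type identities $\gamma\,\frac{1}{1-\calV\otimes\lambda^0}=\mathbf I_{K-1}$, $\gamma\mathbf1=\mathbf 0$, $\beta^\top\frac{1}{1-\calV\otimes\lambda^0}=\mathbf 0$, $\beta^\top\mathbf1=1$ from (\ref{def:gam})--(\ref{def:bet}) are what reduce the raw products to the stated closed forms, exactly as $\zeta\gamma$ and $\beta^\top\mathbf 1_{\RR^K}=1$ did in the radiative-transfer lemma.

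\textbf{Main obstacle.} The conceptual content is identical to the radiative-transfer case; the difficulty is purely the bookkeeping of several competing small/large quantities in the ``$\lambda_0$'' column — the interplay between $\lambda_0^\eps=\eps\lambda_0^1+O(\eps^2)$ (so $e^{-\lambda_0^\eps\Delta x/\eps}$ is $O(1)$, not small, unlike $e^{-\lambda_\pm\Delta x/\eps}$), the $O(\eps)$ smallness of $\zeta_0^\eps$, and the $O(\eps)$ corrections to $T_\eps(\pm\calV)$ — all of which survive at order $\eps^0$ after the $\frac1\eps$ prefactor. Getting the $\frac{e^{-\lambda_0^1\Delta x}}{e^{-\lambda_0^1\Delta x}-1}$ and $\frac{1}{e^{-\lambda_0^1\Delta x}-1}$ weights in the right blocks, with the right signs, is the delicate point; everything else is a routine Taylor expansion followed by simplification via the biorthogonality relations for $\gamma,\beta$.
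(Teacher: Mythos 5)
Your decomposition $B^\eps = \frac1\eps(A^\eps-A^0)(N^\eps)^{-1} + A^0(N^0)^{-1}\cdot\frac1\eps(N^0-N^\eps)(N^\eps)^{-1}$ is algebraically identical to the paper's splitting $B^\eps=\frac1\eps(A_\eps-A_0)N_\eps^{-1}+\frac1\eps(A_0-A_0N_0^{-1}N_\eps)N_\eps^{-1}$, and your subsequent steps (discarding the exponentially small $e^{-\lambda_\pm\Delta x/\eps}$ blocks, Taylor-expanding $T_\eps$, $\zeta_\pm^\eps$, $\zeta_0^\eps$ and $e^{-\lambda_0^\eps\Delta x/\eps}$ via Lemma \ref{lem:lambda}, then reducing with the relations defining $\gamma$ and $\beta$) are exactly those of the paper's proof. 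The approach is correct; the only caveats are that you stop at the outline level rather than carrying out the block-by-block expansions that actually produce $B^{10},\dots,B^{40}$, and that your stated middle-block limit has the arguments of $T_\eps$ transposed relative to \eqref{eq:Aeps} (a sign-labelling slip to watch when executing the computation).
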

\begin{proof}
\begin{enumerate}
\item First, rewrite 
$$
B^\eps = \frac{1}{\eps}(A_\eps-A_0)N_\eps^{-1} + \frac{1}{\eps}(A_0-A_0N_0^{-1}N_\eps)N_\eps^{-1}.
$$ 
From \eqref{eq:Aeps} and \eqref{eq:A0N0}, we have
\begin{align*}
\frac{A_\eps-A_0}{\eps} =
\left(\begin{matrix}
\frac{1}{\eps} e^{-\lambda_+ \Delta x /\eps}\zeta_+^\eps &\quad \frac{1}{\eps}\left(\frac{1}{T_\eps(\calV)} - \frac{1}{T_\eps(-\calV)}\right) &\quad -\frac{1}{\eps}(\zeta_-^\eps-\zeta^0) \quad \\
- \frac{1}{\eps}(\zeta_+^\eps-\zeta^0) &\quad \frac{1}{\eps}\left(\frac{1}{T_\eps(-\calV)} - \frac{1}{T_\eps(\calV)}\right) &\quad \frac{1}{\eps}e^{\lambda_-\Delta x/\eps}\zeta_-^\eps \quad
\end{matrix}\right.  \\
\left.\begin{matrix}
\frac{1}{\eps}\left(e^{-\lambda_0^\eps\Delta x/\eps}\zeta_0^\eps - \frac{1}{T_\eps(\calV)}+\frac{1}{T_\eps(-\calV)}\right)  \\
\frac{1}{\eps}\left(-\zeta_0^\eps+\frac{1}{T_\eps(\calV)}-\frac{1}{T_\eps(-\calV)}\right)
\end{matrix}\right),
\end{align*}
where, from \eqref{eq:Teps},
$$
\lim_{\eps \to 0} \frac{1}{\eps} \left(\frac{1}{T_\eps(\calV)} -\frac{1}{T_\eps(-\calV)}\right) = - 2 \phi(\calV \pa_x S).
$$
\item Denoting $\lambda^1_\pm = \lim_{\eps\to 0} \frac{1}{\eps} (\lambda_\pm^\eps - \lambda^0)$ (as in Lemma \ref{lem:lambda}), we get
$$
\lim_{\eps\to 0} \frac{1}{\eps}(\zeta_\pm^\eps -\zeta^0) = 
\frac{\calV\otimes\lambda_\pm^1-\phi(\calV \pa_xS)}{(1\mp\calV\otimes \lambda^0)^2} -
\frac{\calV\otimes\lambda_\pm^1-\phi(\calV \pa_xS)}{(1\pm\calV\otimes \lambda^0)^2}.
$$
Also, 
$$
\frac{1}{\eps} \zeta_0^\eps \to -2\phi(\calV\pa_xS)+2\lambda_0^1\calV.
$$
Thus, when $\eps\to 0$,
\begin{align*}
\lim_{\eps\to 0} \frac{A_\eps-A_0}{\eps} =
\left( \begin{matrix}
\mathbf{0}_{K\times K-1} &\quad - 2 \phi(\calV \pa_x S)\quad \\
\frac{\phi(\calV \pa_xS)-\calV\otimes\lambda_+^1}{(1-\calV\otimes \lambda^0)^2} - \frac{\phi(\calV \pa_xS)-\calV\otimes\lambda_+^1}{(1+\calV\otimes \lambda^0)^2}
&\quad 2 \phi(\calV \pa_x S)\quad
\end{matrix}\right.    \\
\left.\begin{matrix}
\frac{\phi(\calV \pa_xS)-\calV\otimes\lambda_-^1}{(1+\calV\otimes \lambda^0)^2} - \frac{\phi(\calV \pa_xS)-\calV\otimes\lambda_-^1}{(1-\calV\otimes \lambda^0)^2}
&\quad  2 \phi(\calV \pa_x S)  \\
\mathbf{0}_{K\times K-1} &\quad  - 2 \lambda_0^1 \calV
\end{matrix}\right).
\end{align*}
\item Thanks to the fact that $\gamma \frac{1}{1-\calV\otimes \lambda^0}=\mathbf{I}_{K-1}$, it comes 
\begin{align*}
&\frac{1}{\eps}(A_0-A_0N_0^{-1}N_\eps) = 
\left(\begin{matrix} 
\zeta^0 \frac{\gamma}{\eps}\left(\frac{e^{-\lambda_+\Delta x/\eps}}{T_\eps(-\calV) + \calV\otimes \lambda_+} \quad \frac{1}{T_\eps(-\calV)}\right) \\
\zeta^0 \frac{\gamma}{\eps}\left(\frac{1}{T_\eps(\calV) - \calV\otimes \lambda_+}-\frac{1}{1-\calV\otimes \lambda^0} 
\quad \frac{1}{T_\eps(\calV)}\right)
\end{matrix}\right.  \\
&\qquad\qquad\left.\begin{matrix}
\zeta^0 \frac{\gamma}{\eps}\left(\frac{1}{T_\eps(-\calV)+\calV\otimes \lambda_-} -\frac{1}{1-\calV\otimes \lambda^0} 
\quad \frac{e^{-\lambda_0^\eps \Delta x/\eps}}{T_\eps(-\calV)+\lambda_0^\eps \calV} - \frac{1}{T_\eps(-\calV)}\right)  \\
\zeta^0 \frac{\gamma}{\eps}\left(\frac{e^{\lambda_-\Delta x/\eps}}{T_\eps(\calV)-\calV\otimes \lambda_-} 
\quad \frac{1}{T_\eps(\calV)-\lambda_0^\eps \calV} - \frac{1}{T_\eps(\calV)}\right)
\end{matrix}\right).
\end{align*}
Since $\gamma \mathbf{1}_{\RR^K} = \mathbf{0}_{\RR^{K-1}}$, we have
$$
\gamma \frac{1}{\eps T_\eps(\calV)} = \frac{\gamma}{\eps} \left(\frac{1}{T_\eps(\calV)}-\mathbf{1}_{\RR^K}\right) = \gamma \frac{-\phi(\calV \pa_xS)}{1+\eps\phi(\calV \pa_xS)} \underset{\eps\to 0}{\longrightarrow} - \gamma \phi(\calV \pa_xS).
$$
We have also
$$
\frac{\gamma}{\eps}\left(\frac{1}{T_\eps(\calV) - \calV\otimes \lambda_+}-\frac{1}{1-\calV\otimes \lambda^0}\right) 
\underset{\eps\to 0}{\longrightarrow}
\gamma \frac{-\phi(\calV \pa_x S)+\calV\otimes\lambda^1_+}{(1-\lambda^0\otimes \calV)^2}.
$$
Then,
\begin{align*}
&\lim_{\eps\to 0} \frac{1}{\eps}(A_0-A_0N_0^{-1}N_\eps) =  \\
&\begin{pmatrix} 
  \zeta^0 \gamma \left(\mathbf{0}_{K\times K-1} \quad \phi(\calV \pa_x S)\right) &
\zeta^0 \gamma \left(\frac{\phi(\calV \pa_x S)-\calV\otimes\lambda^1_-}{(1-\lambda^0\otimes \calV)^2}
\quad -\phi(\calV \pa_x S) \right)  \\
\zeta^0 \gamma\left(\frac{-\phi(\calV \pa_x S)+\calV\otimes\lambda^1_+}{(1-\lambda^0\otimes \calV)^2}
\quad -\phi(\calV\pa_xS)\right) &
\zeta^0 \gamma\left(\mathbf{0}_{K\times K-1}
\quad \lambda_0^1 \calV\right)
\end{pmatrix}.
\end{align*}
\item Gathering these computations, we reach
\begin{align*}
B^0 &:=\lim_{\eps\to 0} B^\eps \\
& =\lim_{\eps\to 0} \frac{A^\eps - A^0}{\eps} (N^0)^{-1} +
\lim_{\eps\to 0} \frac{1}{\eps} (A^0-A^0(N^0)^{-1}N^\eps) (N^0)^{-1} \\
&= \begin{pmatrix}
B^{10} & B^{20} \\ B^{30} & B^{40}
\end{pmatrix},
\end{align*}
where  $B^{10}$, $B^{20}$, $B^{30}$, and $B^{40}$ are expressed in Lemma \ref{lem:limB}.
\end{enumerate}
\qed\end{proof}
%
{\it Proof of Theorem \ref{th:chemo}}
We proceed as in Section \ref{sec:consistancetr}. First, we have obviously from \eqref{Reps:chemo}
$\calH^\eps_j=\eps \mathbf{I}_{2K} + \calH^0_j$ where
$$
\calH^0_j := \frac{\Delta t}{\Delta x} \VV
\begin{pmatrix} \mathbf{I}_K & \zeta^0_{j-\frac 12}\gamma _{j-\frac 12}- \mathbf{I}_K \\ 
\zeta^0_{j+\frac 12}\gamma_{j+\frac 12} - \mathbf{I}_K & \mathbf{I}_K \end{pmatrix}
$$
Assuming that $f$ admits an Hilbert expansion $f=f^0+\eps f^1 + \ldots$, we 
get, by injecting this expansion into \eqref{scheme2:chemo} and identifying the
term in power of $\eps$,
\begin{equation}\label{eqf0chemo}
\calH^0_j
\begin{pmatrix} \{f^0\}_j^{n+1}(\calV) \\ \{f^0\}_j^{n+1}(-\calV) \end{pmatrix} = 0,
\end{equation}
and
\begin{align}\label{eqf1chemo}
\calH_j^0
\begin{pmatrix} \{f^1\}_j^{n+1}(\calV) \\ \{f^1\}_j^{n+1}(-\calV) \end{pmatrix}
&= \frac{1}{2} \begin{pmatrix}
\{f^0\}_j^{n} - \{f^0\}_j^{n+1}\\
\{f^0\}_j^{n} - \{f^0\}_j^{n+1}
\end{pmatrix}\\ 
& \qquad + \frac{\Delta t}{2\Delta x} \VV 
\begin{pmatrix} 
B_{j-\frac 12}^{10} \{f^0\}_{j-1}^n +B^{20}_{j-\frac 12} \{f^0\}_j^n \\[1mm]
 B^{30}_{j+\frac 12} \{f^0\}_{j}^n+B^{40}_{j+\frac 12} \{f^0\}_{j+1}^n 
\end{pmatrix}. \nonumber
\end{align}
%
Since from \eqref{ortho0discret} we have that $\forall \ell$,  
$\sum_{k=1}^K \omega_k v_k (\zeta^0 \gamma)_{k\ell} = 0,$
we may apply Lemma \ref{Hzero} and deduce
\begin{itemize}
\item Ker$(\calH_j^0)=$ Span$\{\mathbf{1}_{\RR^K}\}$,
\item Im$(\calH_j^0)=\{Z=(Z_1\quad Z_2)^\top,\ Z_i\in\RR^K$ such that $\sum_{k=1}^K \omega_k({Z_1}_k+{Z_2}_k)=0\}$.
\end{itemize}
Thus equation \eqref{eqf0chemo} implies
$$
\{f^0\}_j^{n+1}(\pm\calV)=\frac{\rho_j^{n+1}}{2} \mathbf{1}_{\RR^K}.
$$
%
Injecting into \eqref{eqf1chemo}, we get
\begin{align}\label{eqf1chemobis}
\calH_j^0
\begin{pmatrix} \{f^1\}_j^{n+1}(\calV) \\ \{f^1\}_j^{n+1}(-\calV) \end{pmatrix}
&= \frac{1}{2} \begin{pmatrix}
(\rho_j^{n} - \rho_j^{n+1}) \mathbf{1}_{\RR^K} \\
(\rho_j^{n} - \rho_j^{n+1}) \mathbf{1}_{\RR^K} 
\end{pmatrix}\\ 
& \qquad + \frac{\Delta t}{2\Delta x} \VV 
\begin{pmatrix} 
B_{j-\frac 12}^{10} \mathbf{1}_{\RR^K} \rho_{j-1}^n +B^{20}_{j-\frac 12} \mathbf{1}_{\RR^K} \rho_j^n \\[1mm]
 B^{30}_{j+\frac 12} \mathbf{1}_{\RR^K} \rho_{j}^n+B^{40}_{j+\frac 12}\mathbf{1}_{\RR^K} \rho_{j+1}^n 
\end{pmatrix}. \nonumber
\end{align}
Thanks to the relations,
$$\gamma \mathbf{1}_{\RR^K} = \mathbf{0}_{\RR^{K-1}}. 
\qquad \beta^\top \mathbf{1}_{\RR^K} = 1,
$$ 
we deduce from the expression of $B^0$ in Lemma \ref{lem:limB},
\begin{align*}
&B^{10}_{j-\frac 12} \mathbf{1}_{\RR^K} = -\frac{e^{-\lambda_{0,j-\frac 12}^1\Delta x}}{e^{-\lambda_{0,j-\frac 12}^1\Delta x}-1}(2-\zeta^0\gamma)\phi(\calV\pa_xS_{j-\frac 12}); \\
&B^{20}_{j-\frac 12} \mathbf{1}_{\RR^K} = \frac{1}{e^{-\lambda_{0,j-\frac 12}^1\Delta x}-1}(2-\zeta^0\gamma)\phi(\calV\pa_xS_{j-\frac 12}); \\
&B_{j+\frac 12}^{30} \mathbf{1}_{\RR^K} = (2-\zeta^0\gamma)\phi(\calV\pa_xS_{j+\frac 12})
 +\frac{\lambda_{0,j+\frac 12}^1}{e^{-\lambda_{0,j+\frac 12}^1\Delta x}-1}(2-\zeta^0\gamma)\calV; \\
&B_{j+\frac 12}^{40} \mathbf{1}_{\RR^K} = \frac{-\lambda_{0,j+\frac 12}^1}{e^{-\lambda_{0,j+\frac 12}^1\Delta x}-1}(2-\zeta^0\gamma)\calV.
\end{align*}
The solvability condition for \eqref{eqf1chemobis} is that its right hand side belongs to
Im(${\calH}_j^0$). Then, we multiply each line  by $\omega_k$ and add over $k$ to obtain
\begin{align*}
0 = & \rho_j^{n}-\rho_j^{n+1} + \frac{\Delta t}{\Delta x}
\left(\frac{-e^{-\lambda_{0,j-\frac 12}^1\Delta x}\rho_{j-1}^n+\rho_j^n}
{e^{-\lambda_{0,j-\frac 12}^1\Delta x}-1}
\sum_{k=1}^K \omega_k v_k \phi(v_k\pa_xS_{j-\frac 12}) \right.  \\
&\left. + \rho_j^n \sum_{k=1}^K \omega_k v_k \phi(v_k\pa_xS_{j+\frac 12})
+ \frac{\lambda_{0,j+\frac 12}^1(\rho_j^n-\rho_{j+1}^n)}{e^{-\lambda_{0,j+\frac 12}^1\Delta x}-1}\sum_{k=1}^K \omega_k v_k^2
\right),
\end{align*}
where we have used also $\forall \ell$,  
$\sum_{k=1}^K \omega_k v_k (\zeta^0 \gamma)_{k\ell} = 0$.
From Lemma \ref{lem:lambda} and assumptions (\ref{eq:restrict}), it comes
$$
\frac 13 \lambda_0^1 = 
\sum_{k=1}^K \omega_k v_k \phi(v_k \pa_xS).
$$
Denoting $E_{j-\frac 12}:=\sum_{k=1}^K \omega_k v_k \phi(v_k \pa_xS_{j-\frac 12})$,
the above scheme rewrites as
$$
\rho_j^{n+1} = \rho_j^{n} + \frac{\Delta t}{\Delta x}
\left(-E_{j-\frac 12}\frac{e^{-3 E_{j-\frac 12} \Delta x}\rho_{j-1}^n-\rho_j^n}
{e^{-3 E_{j-\frac 12} \Delta x}-1}
+  E_{j+\frac 12}\frac{e^{-3 E_{j+\frac 12}\Delta x}\rho_j^n-\rho_{j+1}^n}{e^{-3 E_{j+\frac 12}\Delta x}-1}
\right),
$$
in which we recognize the Sharfetter-Gummel scheme \eqref{SGchemo}.
\qed

\section{Diffusive limit of Vlasov-Fokker-Planck kinetic equations}\label{sec:6}

\subsection{Presentation of the continuous model}

This kinetic model reads, see \cite{NPS,PS,WO1}, in parabolic scaling,
\begin{equation}\label{VFP}
\eps \pa_tf^\eps+v\pa_xf^\eps + E \cdot \pa_v f^\eps =
\frac{1}{\eps} \pa_v\left (vf^\eps + \kappa \pa_v f^\eps\right), \qquad 0<\eps \ll 1. 
\end{equation}
The electric field is denoted $E$ and may derive from a potential $\phi$ through the relation $E=\pm \pa_x \phi$. When such a potential $\phi$ depends self-consistently on the macroscopic density of electrons (through the Poisson equation), one speaks about the Vlasov-Poisson-Fokker-Planck system. To keep the exposition simple, we shall assume that a steady electric field $E(x)$ is given. 
When $\eps \to 0$, the kinetic distribution in \eqref{VFP} relaxes to a Maxwellian, 
$$
f^\eps\to \rho^0 \frac{1}{\sqrt{2\pi\kappa}} e^{-v^2/2\kappa},
$$
where the macroscopic density $\rho^0$ solves a drift-diffusion (continuity) equation,
\begin{equation}\label{dd}
\boxed{
\pa_t \rho^0 + \kappa\pa_x \left (\frac{E}{\kappa}\rho^0 - \pa_x \rho^0 \right) = 0.}
\end{equation}
The Sharfetter-Gummel scheme associated to this system reads
\begin{align}\label{SGvfp}
\rho_j^{n+1}& = \rho_j^n + \frac{\Delta t}{\Delta x}
\big(\bar{\calJ}_{j-\frac 12}^n-\bar{\calJ}_{j+\frac 12}^n\big), \\
\bar{\calJ}_{j-1/2}^n &=
 E_{j-\frac 12} \frac{\rho_{j-1}^n - e^{-E_{j-\frac 12}\Delta x/\kappa}\rho_j^n}
{1-e^{-E_{j-\frac 12}\Delta x/\kappa}}. \nonumber
\end{align}

\subsection{Spectral decomposition of stationary solutions}

Consider the Fokker-Planck stationary problem with inflow boundaries,
\begin{equation}\label{stat2}
v\pa_x \bar f = \frac{1}{\eps} \pa_v \Big((v-\eps E)\bar f
+ \kappa \pa_v \bar f\Big), \qquad \eps,\kappa>0.
\end{equation}
A convenient ``separated variables'' ansatz reads now, (see e.g. \cite{BuTitu})
$$
\bar f(x,v)=\exp(-\lambda x-\mu v)\psi_\lambda(v),
$$
so that one recovers a standard Sturm-Liouville eigenvalue problem (see \cite{beals} and \cite[Chapter 12.3]{book}) with a discrete spectrum. The null eigenvalue $\lambda=0$ is double, its two associated non-damped modes are denoted $\Psi_{\pm 0}^\eps$ (``diffusion solutions'' in \cite{FK}) among which appears a space-homogeneous mode:
$$
\Psi_{\pm 0}^\eps(x,v) = \exp(-\frac{\mu_{\pm 0}^\eps x}{\eps}) \psi^\eps_{\pm \ell}(v),
$$
where
\begin{itemize}
\item when $E>0$,
\begin{align*}
\mu_0^\eps = 0; \quad &\psi_{0}^\eps(v) = \exp ( -\frac{(v-|\eps E|)^2}{2\kappa} ) ; 
\\
\mu_{-0}^\eps = -\frac{\eps E}{\kappa} ;\quad &\psi_{-0}^\eps(v) = \exp (-\frac{|\eps E|^2}{2\kappa}) \exp (- \frac{v^2}{2\kappa});
\end{align*}
\item when $E<0$,
\begin{align*}
\mu_{0}^\eps = -\frac{\eps E}{\kappa} ;\quad &\psi_{0}^\eps(v) = \exp (-\frac{|\eps E|^2}{2\kappa}) \exp (- \frac{v^2}{2\kappa}) ;
\\ 
\mu_{-0}^\eps = 0; \quad & \psi_{-0}^\eps(v) = \exp ( -\frac{(v+|\eps E|)^2}{2\kappa} ).
\end{align*}
\end{itemize}
Other eigenfunctions $\Psi_{\pm \ell}^\eps$, for $\ell\in\NN^*$, are explicitly given in \cite[p. 251]{book}:
\begin{eqnarray}
\Psi_{\pm \ell}^\eps(x,v) = \exp ( - \frac{\mu_{\pm \ell}^\eps x}{\eps} )\, \psi_{\pm \ell}^\eps(v) \ ,
\label{eq:Psi} \\
\psi_{\pm\ell}^\eps(v) = \exp ( -\mu_{\pm\ell}^\eps v) H_\ell(\tilde{v}_{\pm\ell}^\eps )
\exp(-(\tilde{v}_{\pm\ell}^\eps)^2)\ , \label{eq:psi}
\end{eqnarray}
where $H_\ell$ is the $\ell$th Hermite polynomial and
$$
\mu_{\pm\ell}^\eps = \frac{-\eps E\pm\sqrt{(\eps E)^2 +4\kappa \ell}}{2\kappa}, \qquad
\tilde{v}_{\pm\ell}^\eps = \frac{v-2\mu_{\pm\ell}^\eps \kappa - \eps E}{\sqrt{2 \kappa}}.
$$
Hermite's polynomials are such that deg$(H_\ell)=\ell$ and $H_\ell(-X)=(-1)^\ell H_\ell(X)$.
They are orthogonal with respect to a strongly-growing (indefinite) weight:
\begin{equation}\label{ortho-psi}
\int_\RR v\, \psi_{\pm k}^\eps(v)\psi_{\pm \ell}^\eps(v)\, \exp\left(\frac{(v-\eps E)^2}{2\kappa}\right)dv=0, \qquad \mbox{ if } k \not = \ell.
\end{equation}
A spectral decomposition follows, for smooth enough functions $\bar f(x,v)$ \cite{BP,CS},
\begin{equation}\label{vfp-ortho}
\boxed{
\bar f(x,v)=\alpha_{+0} \Psi_0^\eps(x,v) + \alpha_{-0} \Psi_{-0}^\eps (x,v)
+ \sum_{\ell \geq 1} \left(\alpha_{\ell} \Psi^\eps_{\ell}(x,v)+\alpha_{-\ell} \Psi^\eps_{-\ell}(x,v)\right).}
\end{equation}
By inserting the space-homogeneous eigenfunction associated to $k=0$, one sees from \eqref{ortho-psi} 
that none in all the set of $\Psi_{\pm \ell}^\eps$, $\ell>0$, can carry any net macroscopic flux: 
\begin{equation}\label{ortho-psi0}
\forall \ell \not=0, \quad
\int_\RR v\psi_{\pm \ell}^\eps(v)\psi_{\pm 0}^\eps(v)\exp\left(\frac{(v-\eps E)^2}{2\kappa}\right) dv=\int_\RR v\psi_{\pm \ell}^\eps(v) dv=0.
\end{equation}

For future use, we compute the limit as $\eps\to 0$ of the above expressions.
We obtain straightforwardly, as $\eps\to 0$,
\begin{subequations}
\label{eq:psi0}
\begin{gather}
\psi_{\pm 0}^0(v) = \exp\Big(-\frac{v^2}{2\kappa}\Big); \label{eq:psi00} \\
\psi_{\pm \ell}^0(v) = H_\ell\Big(\frac{v\mp 2\sqrt{\ell \kappa}}{\sqrt{2\kappa}}\Big) 
\exp\left(-\frac{v^2}{2\kappa}\pm v\sqrt{\frac{\ell}{\kappa}} - 2\ell\right),
\quad \mbox{ for } \ell\in \NN^*. \label{eq:psi0l}
\end{gather}
\end{subequations}
We deduce from the symmetry of Hermite polynomials that we have the identity 
$\psi^0_{-\ell}(-v)=(-1)^\ell \psi^0_\ell(v)$. 
These expressions do not depend on $E \in \RR$.

\subsection{Assumptions on the set of discrete velocities and weights.}

As mentioned in the introduction, and contrary to the Case's functions,
being exponential polynomials, solutions of the stationary Vlasov-Fokker-Planck equation, 
do not constitute a Chebyshev $T$-system on $(0,+\infty)$.

Indeed, if we assume that the family 
$\{\psi_0^0, \ldots, \psi_{K-1}^0\}$ is endowed with 
the Haar property, then any non-zero linear combination of these functions 
will have no more than $K-1$ roots on $(0,+\infty)$;
otherwise, denoting $v_0,\ldots,v_{K-1}$ these roots, 
det$(\psi_\ell^0(v_k))_{k,\ell}=0$  (see Proposition \ref{prop1:Haar}).
However, it is not difficult to find ad-hoc coefficients $a_0,\ldots,a_{K-1}$, for which the
function $$\RR^+ \ni v\mapsto a_0\, \psi_0^0(v) + \sum_{\ell=1}^{K-1} a_\ell\, \psi_\ell^0(v)$$
admits more than $K-1$ roots on $(0,+\infty)$.
For instance, 
\begin{itemize}
\item for $K=2$, the function (after simplification by $\exp(-v^2/2\kappa)$),
$$v\mapsto \frac{3}{2} + \frac{v-2}{\sqrt{2}} e^v$$
has two positive roots, approximately given by 0.1216 and 1.5495;
\item For $K=3$, the function $$v\mapsto
-2.75+0.2 e^v (v-2) + e^{\sqrt{2}v}\left(\frac{v^2}{2}-2\sqrt{2}v+3\right)$$
admits three positive roots, approximately given by 0.132, 0.796 and 4.192.
\end{itemize}•
The number of real roots for exponential polynomials admits the
P\'olya-Szeg\"o estimate as an upper bound, see Appendix \ref{sec:polya}.
However, exponential monomials do satisfy the Haar property on $(0,+\infty)$ as shown in Theorem \ref{th:alternant}.

Accordingly, we must prescribe some assumptions on the set of
discrete velocities. More precisely, we assume that $v_1,\ldots,v_K$
are chosen such that, 
\begin{equation}\label{hypV1}\mbox{given } K \in \NN, \qquad
\mbox{det}\Big(\psi_0^0(\calV) \quad \psi_1^0(\calV) \ \ldots \ \psi_{K-1}^0(\calV)\Big) \neq 0,
\end{equation}
and such that the family
\begin{align}
\Big\{ & \begin{pmatrix} \psi_0^0(\calV) \\ \psi_0^0(-\calV) \end{pmatrix},
\begin{pmatrix} \psi_1^0(\calV) \\ \psi_1^0(-\calV) \end{pmatrix}, \cdots,
\begin{pmatrix} \psi_{K-1}^0(\calV) \\ \psi_{K-1}^0(-\calV) \end{pmatrix},  \nonumber \\
&\begin{pmatrix} \psi_1^0(-\calV) \\ \psi_1^0(\calV) \end{pmatrix}, \cdots,
\begin{pmatrix} \psi_{K-1}^0(-\calV) \\ \psi_{K-1}^0(\calV) \end{pmatrix}
\Big\} 
\quad \mbox{ is linearly independent in } \RR^{2K}.
\label{hypV2}
\end{align}
We assume moreover that the corresponding weights $\omega_k$ 
are such that the orthogonality relation \eqref{ortho-psi0} holds true
at the discrete level, i.e.
\begin{equation}\label{eq:ortho-psi-dis}
\forall \ell=1,\ldots, K-1, \qquad 
\sum_{k=1}^K \omega_k v_k (\psi^0_{\pm \ell}(v_k)-\psi^0_{\pm \ell}(-v_k)) = 0.
\end{equation}
Thanks to the relation $\psi^0_{-\ell}(-v_k)=(-1)^\ell\psi^0_{\ell}(v_k)$, it suffices to get the above identity only for positive $\ell$.
%

Finally we mention the following Lemma which shows that actually assumptions \eqref{hypV1} and \eqref{hypV2} hold at the continuous level.
\begin{lemma}\label{free-lem}
For any $N  \in \NN$, the family $(\psi^0_0, \psi^0_1, \cdots, \psi^0_{N-1})$ given by (\ref{eq:psi}) is linearly independent on $v \in (0, +\infty)$, and also on $v\in \RR$. 
\end{lemma}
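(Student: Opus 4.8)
The plan is to strip off the common Gaussian weight and thereby reduce the claim to the classical linear independence of exponential polynomials with pairwise distinct exponents. From the explicit formulas \eqref{eq:psi00}--\eqref{eq:psi0l}, for $\ell=0,1,\dots,N-1$ one may write
$$
\psi^0_\ell(v) = Q_\ell(v)\, e^{-v^2/(2\kappa)}\, e^{\mu_\ell v}, \qquad \mu_\ell := \sqrt{\ell/\kappa},
$$
where $Q_0\equiv 1$ and, for $\ell\ge 1$, $Q_\ell(v):=e^{-2\ell}H_\ell\big((v-2\sqrt{\ell\kappa})/\sqrt{2\kappa}\big)$ is a polynomial of degree exactly $\ell$ (here we use $\deg H_\ell=\ell$). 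The exponents satisfy $0=\mu_0<\mu_1<\dots<\mu_{N-1}$, hence are pairwise distinct, and the factor $e^{-v^2/(2\kappa)}$ never vanishes. Consequently a relation $\sum_{\ell=0}^{N-1}a_\ell\psi^0_\ell\equiv 0$ on $(0,+\infty)$ is equivalent to $\sum_{\ell=0}^{N-1}a_\ell Q_\ell(v)\,e^{\mu_\ell v}=0$ for every $v>0$.

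The second step is to invoke (or, in two lines, re-prove) the elementary fact that a finite sum $\sum_\ell b_\ell(v)\,e^{\mu_\ell v}$, with polynomial coefficients $b_\ell$ and pairwise distinct real exponents $\mu_\ell$, which vanishes on the half-line $(0,+\infty)$, must have all $b_\ell\equiv 0$. I would argue by peeling off the largest exponent: dividing the identity by $Q_{N-1}(v)\,e^{\mu_{N-1}v}$ and letting $v\to+\infty$, each term with $\ell<N-1$ tends to $0$ because $Q_\ell(v)/Q_{N-1}(v)$ grows at most polynomially while $e^{(\mu_\ell-\mu_{N-1})v}\to 0$; this forces $a_{N-1}=0$. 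One then repeats the argument with $N-2$ in place of $N-1$, and an obvious induction on $N$ (with trivial base case $N=1$) gives $a_0=\dots=a_{N-1}=0$. Hence $(\psi^0_0,\dots,\psi^0_{N-1})$ is linearly independent on $(0,+\infty)$.

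Linear independence on all of $\RR$ is then immediate, since any linear combination vanishing on $\RR$ vanishes a fortiori on $(0,+\infty)$ and is therefore already trivial by the previous step; alternatively one may run the same $v\to+\infty$ asymptotics directly, or appeal to real-analyticity of the $\psi^0_\ell$. There is no genuine obstacle here: the only points requiring a little care are the bookkeeping that $\deg Q_\ell=\ell$ and the strict monotonicity $\mu_0<\dots<\mu_{N-1}$, both of which are read off at once from the closed forms \eqref{eq:psi00}--\eqref{eq:psi0l}. One could equally deduce the statement from Theorem \ref{th:alternant} applied to the exponential monomials $v^k e^{\mu_\ell v}$, but the direct asymptotic argument above is shorter.
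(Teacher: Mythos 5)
Your proof is correct and follows essentially the same route as the paper's: factor out the non-vanishing Gaussian, observe that the exponents $\mu_\ell=\sqrt{\ell/\kappa}$ are strictly increasing, divide by the top term $Q_{N-1}(v)e^{\mu_{N-1}v}$, let $v\to+\infty$ to kill $a_{N-1}$, and iterate. The only (harmless) difference is that the paper treats the case $v\in\RR$ separately via the orthogonality relation \eqref{ortho-psi}, whereas you correctly note it is an immediate consequence of independence on the half-line.
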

%
%
\begin{proof}
Given $N$, we have to show the linear independence on $\RR$ and on $(0,+\infty)$.
The result on $\RR$ is a direct consequence of the orthogonality relation \eqref{ortho-psi}.
Thus we are left to prove the result on $(0,+\infty)$. After expanding the Gaussian term in (\ref{eq:psi}) and normalizing coefficients, any linear combination with $\eps=0$ rewrites,
$$\forall v \in (0,+\infty), \qquad 
\sum_{i=0}^{N-1} \lambda_i \exp(\mu_i \, v)H_i(\tilde v_i)=0.
$$
Since $H_{N-1} \not =0$ for $v$ big enough, it suffices to let $v \to + \infty$ in
$$
\lambda_{N-1} + \sum_{i=0}^{N-2} \lambda_i \exp((\mu_i-\mu_{N-1}) \, v)\frac{H_i(\tilde v_i)}{H_{N-1}(\tilde v_{N-1})} =0,
$$
in order to get $\lambda_{N-1}=0$ because $\mu_i-\mu_{N-1} \leq C <0$. Successive coefficients vanish for the same reason. Hence for any $N$, the family $(\psi_i^0)_{i<N}$ is linearly independent on $\RR^+_*$. 
\qed\end{proof}

\subsection{Corresponding scattering $S$-matrix}

The stationary problem with incoming boundary condition reads
\begin{eqnarray}
&&\label{eqWB1} v\pa_x \barf = \frac{1}{\eps} \pa_v \big((v-\eps E_{j-\frac 12})\barf 
+ \kappa \pa_v \barf\big), \qquad \mbox{ on } (0,\Delta x),\\[2mm]
&&\label{eqWB2} \barf(0,v) = f_{j-1}^{n}(v), \quad \barf(\Delta x,-v) = f_{j}^{n}(-v).
\end{eqnarray}
Based on (\ref{vfp-ortho}) and \eqref{eq:Psi}, we seek spectral approximations obtained by truncating \eqref{vfp-ortho} to the first $2K$ modes,
\begin{equation}\label{eq:fa}
\boxed{
\barf(x,v)= \alpha_{+0} \psi_{0}^\eps(v) e^{-\mu_{0}^\eps x/\eps} + \alpha_{-0} \psi_{-0}^\eps(v)e^{-\mu_{-0}^\eps x/\eps} 
+ \sum_{\ell =1}^{K-1} \alpha_{\pm \ell} \psi_{\pm \ell}^\eps(v)e^{-\mu_{\pm \ell}^\eps x/\eps}.}
\end{equation}
Coefficients $\alpha_{\pm \ell}$ in this full-range expansion follow from incoming boundary conditions \eqref{eqWB2}, according to a linear system: for $k=1,\ldots,K$,
\begin{align*}
\barf(0,v_k)=f_{j-1}^{n}(v_k) =& \alpha_0 \psi_0^\eps(v_k) + \alpha_{-0} \psi_{-0}^\eps(v_j) + \sum_{\ell =1}^{K-1} \alpha_{\pm\ell} \psi_{\pm\ell}^\eps(v_k), \\
\barf(\Delta x,-v_k)=f_{i+1}^{n}(-v_k) =& \alpha_0 \psi_0^\eps(- v_k) e^{-\frac{\mu_{0}}{\eps}\Delta x}  + \alpha_{-0} \psi_{-0}^\eps(-v_k) e^{-\frac{\mu_{-0}^\eps}{\eps} \Delta x} 
\\&\qquad \qquad \qquad + \sum_{\ell=1}^{K-1} \alpha_{\pm\ell} \psi_{\pm\ell}^\eps(-v_k)e^{-\frac{\mu_{\pm \ell}^\eps}{\eps}\Delta x}.
\end{align*}
%
The $K\times (K-1)$ matrix of eigenvectors associated to nonzero eigenvalues is
$$
\psi_\pm^\eps(\calV)=\begin{pmatrix} \psi_{\pm 1}^\eps(\calV) & \ldots  &  \psi_{\pm K-1}^\eps(\calV) \end{pmatrix},
$$
so that,
$$
\begin{pmatrix} \alpha_+ \\ \alpha_- \end{pmatrix} = (\calM^\eps)^{-1} \begin{pmatrix} f_{j-1}^{n}(\calV) \\ f_{j}^{n}(-\calV) \end{pmatrix},
$$
where $\calM^\eps$ is the $2 K\times 2 K$ matrix defined by
\begin{equation*}
\calM^\eps=\begin{pmatrix} 
\psi_+^\eps(\calV) & \psi_{0}^\eps(\calV) & \psi_-^\eps(\calV) & \psi_{-0}^\eps(\calV) \\ 
\psi_+^\eps(-\calV) e^{-\mu_{+}^\eps\Delta x/\eps} & \psi_0^\eps(-\calV) e^{-\mu_0^\eps \Delta x/\eps} & \psi_-^\eps(-\calV) e^{-\mu_{-}^\eps\Delta x/\eps} & \psi_{-0}^\eps(-\calV) e^{-\mu_{-0}^\eps\Delta x/\eps}
\end{pmatrix}.
\end{equation*}
Outgoing values of $\barf$ follow thanks to \eqref{eq:fa},
\begin{eqnarray*}
& \barf(\Delta x,\calV) = &\alpha_0 \psi_0^\eps(\calV) e^{-\mu_{0}^\eps\Delta x/\eps} + \alpha_{-0} \psi_{-0}^\eps(\calV)e^{-\mu_{-0}^\eps\Delta x/\eps} \\
&&+ \sum_{\ell=1}^{K-1} \alpha_{\pm \ell} \psi_{\pm\ell}^\eps(\calV)e^{-\mu_{\pm \ell}^\eps \Delta x/\eps}, \\
& \barf(0,-\calV) = &\alpha_0 \psi_0^\eps(-\calV) + \alpha_{-0} \psi_{-0}^\eps(-\calV) + \sum_{\ell=1}^{K-1} \alpha_{\pm\ell} \psi_{\pm\ell}^\eps(-\calV).
\end{eqnarray*}
Then, denoting 
\begin{equation*}
\widetilde{\calM^\eps}=\begin{pmatrix} \psi_+^\eps(\calV)e^{-\mu_{+}^\eps\Delta x/\eps} & \psi_0^\eps(\calV) e^{-\mu_0^\eps\Delta x/\eps} & \psi_-^\eps(\calV)e^{-\mu_{-}^\eps\Delta x/\eps} & \psi_{-0}^\eps(\calV)e^{-\mu_{-0}^\eps\Delta x/\eps}  \\ 
\psi_+^\eps(-\calV) &  \psi_0^\eps(-\calV) & \psi_-^\eps(-\calV) & \psi_{-0}^\eps(-\calV)
\end{pmatrix},
\end{equation*}
we get
$$
\begin{pmatrix} \ftilde(\Delta x,\calV) \\ \ftilde(0,-\calV) \end{pmatrix} = \widetilde{\calM^\eps} (\calM^\eps)^{-1} \begin{pmatrix} f_{j-1}^{n}(\calV) \\ f_{j}^{n}(-\calV) \end{pmatrix}.
$$
Accordingly, the scattering matrix is defined by, 
$$
\calS^{\eps}_{j-1/2}=\widetilde{\calM^\eps} (\calM^\eps)^{-1}.\qquad \mbox{ (dependent on $j-\frac 1 2$)}
$$

\subsection{Decomposition of the scattering matrix}

We perform the computation in the case $E>0$. Computations in the case $E<0$ are similar.
We start from the expression, 
$$
\calS^\eps = \widetilde{\calM^\eps} (\calM^\eps)^{-1} = \widetilde{N^\eps} (N^\eps)^{-1},
$$
where (expressions of $\mu_{\pm 0}^\eps$ are used),
$$
N^\eps=\begin{pmatrix} 
\psi_+^\eps(\calV) & \psi_{0}^\eps(\calV) & \psi_-^\eps(\calV) e^{\mu_{-}^\eps\Delta x/\eps} & \psi_{-0}^\eps(\calV)  e^{- E\Delta x/\kappa} \\ 
\psi_+^\eps(-\calV) e^{-\mu_{+}^\eps\Delta x/\eps} & \psi_0^\eps(-\calV) & \psi_-^\eps(-\calV) & \psi_{-0}^\eps(-\calV)
\end{pmatrix},
$$
$$
\widetilde{N^\eps}=\begin{pmatrix} \psi_+^\eps(\calV)e^{-\mu_{+}^\eps\Delta x/\eps} & \psi_0^\eps(\calV)  & \psi_-^\eps(\calV) & \psi_{-0}^\eps(\calV)  \\ 
\psi_+^\eps(-\calV) &  \psi_0^\eps(-\calV) & \psi_-^\eps(-\calV)e^{\mu_{-}^\eps\Delta x/\eps} & \psi_{-0}^\eps(-\calV)e^{-E\Delta x/\kappa}
\end{pmatrix}.
$$
Moreover, the product $\widetilde{N^\eps} (N^\eps)^{-1}$ is invariant if, in both matrices $N^\eps$ and $\widetilde{N^\eps}$, we subtract to the last column $e^{-E\Delta x/\kappa}$ times the $K$th column, i.e. 
\begin{equation}\label{scat:vfp}
\calS^\eps = \widetilde{\calN^\eps} (\calN^\eps)^{-1},
\end{equation}•
with
\begin{equation*}
\calN^\eps=\begin{pmatrix} 
\psi_+^\eps(\calV) & \psi_{0}^\eps(\calV) & \psi_-^\eps(\calV) e^{\mu_{-}^\eps\Delta x/\eps} & (\psi_{-0}^\eps(\calV)-\psi_{0}^\eps(\calV))  e^{- E\Delta x/\kappa} \\ 
\psi_+^\eps(-\calV) e^{-\mu_{+}^\eps\Delta x/\eps} & \psi_0^\eps(-\calV) & \psi_-^\eps(-\calV) & \psi_{-0}^\eps(-\calV)-\psi_{0}^\eps(-\calV) e^{-E \Delta x/\kappa}
\end{pmatrix},
\end{equation*}
\begin{equation*}
\widetilde{\calN^\eps}=\begin{pmatrix} \psi_+^\eps(\calV)e^{-\mu_{+}^\eps\Delta x/\eps} & \psi_0^\eps(\calV)  & \psi_-^\eps(\calV) & \psi_{-0}^\eps(\calV) - \psi_0^\eps(\calV) e^{-E \Delta x/\kappa} \\ 
\psi_+^\eps(-\calV) &  \psi_0^\eps(-\calV) & \psi_-^\eps(-\calV)e^{\mu_{-}^\eps\Delta x/\eps} & (\psi_{-0}^\eps(-\calV)-\psi_0^\eps(-\calV))e^{-E\Delta x/\kappa}
\end{pmatrix}.
\end{equation*}
Noticing that when $\eps\to 0$, $\psi_{\pm 0}^\eps (\calV) \to \exp(-\frac{\calV^2}{2\kappa})$, we may pass to the limit $\eps \to 0$ in the latter matrices and get $\calS^\eps \to \calS^0 = \calN^0 (\widetilde{\calN^0})^{-1}$ with
\begin{equation}\label{eq:N0}
\calN^0=\begin{pmatrix} 
\psi_+^0(\calV) & \exp(-\frac{\calV^2}{2\kappa}) & \mathbf{0}_{K} \\ 
\mathbf{0}_{K\times K-1} & \exp(-\frac{\calV^2}{2\kappa}) &
\Big(\psi_-^0(-\calV) \quad (1- e^{-E \Delta x/\kappa})\exp(-\frac{\calV^2}{2\kappa})\Big)
\end{pmatrix},
\end{equation}
\begin{equation}\label{eq:N0tilde}
\widetilde{\calN^0}=\begin{pmatrix} 
\mathbf{0}_{K\times K-1} & \exp(-\frac{\calV^2}{2\kappa})  & 
\Big(\psi_-^0(\calV) \quad (1 - e^{-E \Delta x/\kappa})\exp(-\frac{\calV^2}{2\kappa})\Big) \\ 
\psi_+^0(-\calV) &  \exp(-\frac{\calV^2}{2\kappa}) & \mathbf{0}_K
\end{pmatrix}.
\end{equation}
\begin{lemma}\label{lem:scatdev2}
Under assumption \eqref{hypV1} on the set of discrete velocities,
the $S$-matrix \eqref{scat:vfp} admits the asymptotic expansion in $\eps$,
\begin{equation}\label{eq:scat-vfp}
\fbox{$\calS^\eps = \begin{pmatrix} 
\mathbf{0}_K & \mathbf{I}_K - \zeta_+^0 \gamma_+  \\
\mathbf{I}_K - \zeta_+^0 \gamma_+ & \mathbf{0}_K
\end{pmatrix}
+ \eps B^\eps \,,\quad 
B^\eps = \frac{1}{\eps} (\widetilde{\calN^\eps} (\calN^\eps)^{-1} - \widetilde{\calN^0} (\calN^0)^{-1})$,}
\end{equation}
where the matrices $\gamma_+ \in \calM_{K-1\times K}(\RR)$ and
$\zeta_+^0\in \calM_{K\times K-1}(\RR)$ satisfy,
\begin{align}
\label{def:gam+}
&\gamma_+ = \begin{pmatrix} \gamma_1^\top \\ \vdots \\ \gamma_{K-1}^\top \end{pmatrix} \mbox{ where } \, \gamma_\ell \in \RR^K, \quad \gamma_\ell^\top \psi_k^0(\calV) = \delta_{k\ell}, \quad \gamma_\ell^\top \exp(-\frac{\calV^2}{2\kappa}) = 0; \\
\label{def:zeta+}
&\zeta_\pm^\eps = \begin{pmatrix} \zeta_{\pm 1}^\eps & \dots & \zeta_{\pm (K-1)}^\eps \end{pmatrix}, \qquad 
\zeta_{\pm \ell}^\eps = \psi_{\pm \ell}^\eps(\calV)-\psi_{\pm \ell}^\eps(-\calV)\in \RR^K. \quad
\end{align}
\end{lemma}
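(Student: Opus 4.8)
The argument follows the same two–step template already used for Lemma~\ref{lem:scatrad} and Lemma~\ref{lem:scatdev}: first pass to the limit $\eps\to0$ in the factorisation $\calS^\eps=\widetilde{\calN^\eps}(\calN^\eps)^{-1}$ to identify the leading term $\calS^0:=\widetilde{\calN^0}(\calN^0)^{-1}$, then invert $\calN^0$ by blocks and simplify using the defining relations of the auxiliary matrices. Since $\mu_{+\ell}^\eps\to\sqrt{\ell/\kappa}>0$ and $\mu_{-\ell}^\eps\to-\sqrt{\ell/\kappa}<0$ for every $\ell\ge1$, one has $\mu_{+\ell}^\eps/\eps\to+\infty$ and $\mu_{-\ell}^\eps/\eps\to-\infty$, so all the stiff exponentials $e^{-\mu_{+}^\eps\Delta x/\eps}$ and $e^{\mu_{-}^\eps\Delta x/\eps}$ vanish as $\eps\to0^+$ (faster than any power of $\eps$), while $e^{-E\Delta x/\kappa}$ is a fixed constant and $\psi_{\pm0}^\eps(\pm\calV)\to\exp(-\tfrac{\calV^2}{2\kappa})$; hence $\calN^\eps\to\calN^0$ and $\widetilde{\calN^\eps}\to\widetilde{\calN^0}$ given in \eqref{eq:N0}--\eqref{eq:N0tilde}. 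Because those exponentials are flat at $\eps=0^+$ while all other entries of $\calN^\eps,\widetilde{\calN^\eps}$ depend smoothly on $\eps$ through $\mu_{\pm\ell}^\eps$, the map $\eps\mapsto\calS^\eps$ is $C^1$ near $0$, so $\calS^\eps-\calS^0=O(\eps)$; this is exactly the boxed identity with $B^\eps=\eps^{-1}(\calS^\eps-\calS^0)$, and it remains only to show that $\calS^0$ has the announced block–antidiagonal form.

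The next step is to check that $\calN^0$ is invertible and to compute its inverse by blocks. Read as a $2\times2$ array of $K\times K$ blocks, $\calN^0$ is block lower–triangular,
\[
\calN^0=\begin{pmatrix} M_{01} & \mathbf{0}_K \\ \widetilde{M}_{01} & M_{02} \end{pmatrix},\qquad
M_{01}=\big(\psi_+^0(\calV)\ \ \exp(-\tfrac{\calV^2}{2\kappa})\big),\qquad
M_{02}=\big(\psi_-^0(-\calV)\ \ (1-e^{-E\Delta x/\kappa})\exp(-\tfrac{\calV^2}{2\kappa})\big),
\]
with $\widetilde{M}_{01}=(\mathbf{0}_{K\times(K-1)}\ \ \exp(-\tfrac{\calV^2}{2\kappa}))$, which is also the top–left block of $\widetilde{\calN^0}$. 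The block $M_{01}$ is invertible exactly by hypothesis \eqref{hypV1}; for $M_{02}$ one uses the symmetry $\psi^0_{-\ell}(-v)=(-1)^\ell\psi^0_\ell(v)$, which gives $\psi_-^0(-\calV)=\psi_+^0(\calV)D$ with $D=\mbox{diag}((-1)^\ell)_{\ell=1}^{K-1}$, whence $\det M_{02}=\pm(1-e^{-E\Delta x/\kappa})\det M_{01}\neq0$ because $E\neq0$. Writing $M_{01}^{-1}=\binom{\gamma_+}{\beta_+^\top}$ recovers precisely the defining relations \eqref{def:gam+} of $\gamma_+$ together with $\beta_+^\top\psi_+^0(\calV)=\mathbf{0}^\top$ and $\beta_+^\top\exp(-\tfrac{\calV^2}{2\kappa})=1$; similarly $M_{02}^{-1}=\binom{\gamma_-}{(1-e^{-E\Delta x/\kappa})^{-1}\beta_-^\top}$, and the usual block–inversion formula produces $(\calN^0)^{-1}$ with off–diagonal block $-M_{02}^{-1}\widetilde{M}_{01}M_{01}^{-1}$.

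Finally I would multiply out $\calS^0=\widetilde{\calN^0}(\calN^0)^{-1}$ block by block, each block being simplified by $M_{0i}M_{0i}^{-1}=\mathbf{I}_K$ and by $\gamma_\pm\exp(-\tfrac{\calV^2}{2\kappa})=\mathbf{0}$, $\beta_\pm^\top\exp(-\tfrac{\calV^2}{2\kappa})=1$. The bottom–left block becomes $\psi_+^0(-\calV)\gamma_++\exp(-\tfrac{\calV^2}{2\kappa})\beta_+^\top=\mathbf{I}_K-(\psi_+^0(\calV)-\psi_+^0(-\calV))\gamma_+=\mathbf{I}_K-\zeta_+^0\gamma_+$; the top–right block becomes $\psi_-^0(\calV)\gamma_-+\exp(-\tfrac{\calV^2}{2\kappa})\beta_-^\top=\mathbf{I}_K+\zeta_-^0\gamma_-$, and the same symmetry relation, applied once more, gives $\zeta_-^0=-\zeta_+^0D$ and, by uniqueness of the inverse, $\gamma_-=D\gamma_+$, so $\zeta_-^0\gamma_-=-\zeta_+^0D^2\gamma_+=-\zeta_+^0\gamma_+$ and this block is again $\mathbf{I}_K-\zeta_+^0\gamma_+$. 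The two diagonal blocks collapse to $\mathbf{0}_K$ because the contribution $\exp(-\tfrac{\calV^2}{2\kappa})\beta_+^\top$ coming from $\widetilde{M}_{01}M_{01}^{-1}$ is cancelled by the one propagated through $-\widetilde{M}_{02}M_{02}^{-1}\widetilde{M}_{01}M_{01}^{-1}$, while the bottom block–row of $\widetilde{\calN^0}$ vanishes. Assembling the four blocks yields the boxed form, which is the claim; the case $E<0$ is handled identically after exchanging the roles of $\mu_0^\eps$ and $\mu_{-0}^\eps$. I expect the only genuinely delicate points to be the reduction of the invertibility of $M_{02}$ to \eqref{hypV1} — this is where the absence of a Chebyshev $T$-system structure bites and forces the explicit assumption \eqref{hypV1} — and the symmetry bookkeeping that makes the two off–diagonal blocks coincide; everything else is the same block algebra already carried out in \S\ref{sec:4}--\S\ref{sec:5}.
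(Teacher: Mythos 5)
Your proposal is correct and follows essentially the same route as the paper: pass to the limit in the factorisation $\calS^\eps=\widetilde{\calN^\eps}(\calN^\eps)^{-1}$, invert the block lower-triangular $\calN^0$ using $\gamma_+,\beta$ whose existence is exactly assumption \eqref{hypV1}, and exploit the symmetry $\psi^0_{-\ell}(-v)=(-1)^\ell\psi^0_\ell(v)$ to identify both off-diagonal blocks with $\mathbf{I}_K-\zeta_+^0\gamma_+$. Your explicit reduction of the invertibility of the second block to \eqref{hypV1} via the sign matrix $D$, and the smoothness remark justifying $\calS^\eps-\calS^0=O(\eps)$, are minor elaborations of steps the paper leaves implicit (the latter being settled there by the explicit computation of $\lim_{\eps\to0}B^\eps$ in the following lemma).
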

\begin{remark}
Existence of $\gamma_+$ is guaranteed by assumption \eqref{hypV1}.
Notice that since the limits $\psi^0_{\pm \ell}$ in \eqref{eq:psi0} do not depend on $E$, the first term in the decomposition of the scattering matrix is independent on $E$, hence on $j$.
\end{remark}
\begin{proof}
\begin{enumerate}
\item Let vector $\beta\in \RR^K$ be such that
\begin{equation}\label{def:betavfp}
\beta^\top \psi_\ell^0(\calV) = 0; \quad \beta^\top \exp(-\frac{\calV^2}{2\kappa}) = 1.
\end{equation}
Moreover, since 
\begin{equation}\label{inv1}
\gamma_+ = \begin{pmatrix} \gamma_1^\top \\ \vdots \\ \gamma_{K-1}^\top \end{pmatrix}, 
\mbox{ it comes that } 
\begin{pmatrix} \gamma_+ \\ \beta^\top \end{pmatrix} = \begin{pmatrix} 
\psi_+^0(\calV) &\ \exp(-\frac{\calV^2}{2\kappa}) \end{pmatrix}^{-1},
\end{equation}
where the latter matrix is invertible thanks to assumption \eqref{hypV1}.
It brings also that,
\begin{equation}\label{betapsi}
\sum_{\ell=1}^{K-1} \psi_\ell^0(\calV)\gamma_\ell^\top + \exp(-\frac{\calV^2}{2\kappa})\beta^\top=I_K.
\end{equation}
\item Similarly, as $\psi_{-\ell}^0 (\calV) = (-1)^\ell \psi_{\ell}^0 (-\calV)$, then denoting 
$$\gamma_- = \begin{pmatrix} -\gamma_1^\top \\ \gamma_2^\top \\ \vdots \\ (-1)^{K-1}\gamma_{K-1}^\top \end{pmatrix},
$$
it comes that,
\begin{equation}\label{inv2}
\begin{pmatrix} \gamma_- \\ \frac{1}{1-e^{-E\Delta x/\kappa}}\beta^\top \end{pmatrix} = \begin{pmatrix} 
\psi_-^0(-\calV) &\ (1-e^{-E\Delta x/\kappa})\exp(-\frac{\calV^2}{2\kappa}) \end{pmatrix}^{-1}.
\end{equation}
\item We deduce from both \eqref{inv1} and \eqref{inv2} that,
$$
(\calN^0)^{-1} = 
\begin{pmatrix}
\begin{pmatrix} \gamma_+ \\ \beta^\top \end{pmatrix} & \mathbf{0}_K \\
\begin{pmatrix} \mathbf{0}_{K-1\times K} \\ -\frac{1}{1-e^{-E\Delta x/\kappa}}\beta^\top \end{pmatrix} & 
\begin{pmatrix} \gamma_- \\ \frac{1}{1-e^{-E\Delta x/\kappa}} \beta^\top \end{pmatrix} \\
\end{pmatrix},
$$
and so, with (\ref{eq:N0})--(\ref{eq:N0tilde}),
$$
\calS^\eps \underset{\eps\to 0}{\longrightarrow} \calS^0 := \widetilde{\calN^0}(\calN^0)^{-1},
$$
has the form \eqref{eq:scat-vfp}, with 
\begin{align*}
& \widetilde{\calN^0}(\calN^0)^{-1} = \\
& \begin{pmatrix}
\mathbf{0}_K & \displaystyle \sum_{\ell=1}^{K-1} \psi_\ell^0(-\calV)\gamma_\ell^\top + \exp(-\frac{\calV^2}{2\kappa})\beta^\top \\
\displaystyle \sum_{\ell=1}^{K-1} \psi_\ell^0(-\calV)\gamma_\ell^\top + \exp(-\frac{\calV^2}{2\kappa})\beta^\top & \mathbf{0}_K
\end{pmatrix}.
\end{align*}
\end{enumerate}
Using \eqref{betapsi} allows to complete the proof.
\qed
\end{proof}

\begin{lemma}
For $E>0$, 
\begin{equation}\label{Bzero}
B^0:= \lim_{\eps\to 0} B^\eps = \begin{pmatrix}
B^{10} & B^{20} \\ B^{30} & B^{40}
\end{pmatrix},
\end{equation}
where 
\begin{align*}
B^{10} = &
\frac{1}{1-e^{-\frac{E\Delta x}{\kappa}}} \frac {2 E \calV}{\kappa} \exp(-\frac{\calV^2}{2\kappa})\beta^\top
- \frac{1}{1-e^{-\frac{E\Delta x}{\kappa}}} \zeta^0_+\gamma_+ \left(\frac {E \calV}{\kappa} - \frac{E^2}{\kappa}\right) \exp(-\frac{\calV^2}{2\kappa}) \beta^\top ;  \\
B^{20} = & \left(\frac{d\zeta_-^\eps}{d\eps}_{|\eps=0} + \zeta_+^0 \gamma_+ \frac{d\psi_-^\eps(-\calV)}{d\eps}_{|\eps=0}\right)\gamma_- 
- \frac{e^{-\frac{E\Delta x}{\kappa}}}{1-e^{-\frac{E\Delta x}{\kappa}}}\frac {2 E \calV}{\kappa} \exp(-\frac{\calV^2}{2\kappa})\beta^\top  \\
& \quad - \frac{1}{1-e^{-\frac{E\Delta x}{\kappa}}} \zeta_+^0\gamma_+ 
\Big(\frac {E^2}{\kappa} + \frac {2 E \calV}{\kappa} e^{-\frac{E\Delta x}{\kappa}}\Big) \exp(-\frac{\calV^2}{2\kappa}) \beta^\top ;
  \\
B^{30} = & \left( \zeta_+^0 \gamma_+ \frac{d\psi_-^\eps(\calV)}{d\eps}_{|\eps=0} - \frac{d\zeta_+^\eps}{d\eps}_{|\eps=0} \right)\gamma_+
- \frac{1}{1-e^{-\frac{E\Delta x}{\kappa}}} \frac {2 E \calV}{\kappa} \exp(-\frac{\calV^2}{2\kappa}) \beta^\top  \\
&\quad + \frac{1}{1-e^{-\frac{E\Delta x}{\kappa}}} \zeta_+^0\gamma_+ \left(\frac{E\calV}{\kappa}+\frac{E^2}{\kappa}\right)\exp(-\frac{\calV^2}{2\kappa}) \beta^\top ; \\
B^{40} = & \frac{e^{-\frac{E\Delta x}{\kappa}}}{1-e^{-\frac{E\Delta x}{\kappa}}}
\left(\frac{2E\calV}{\kappa}-\zeta_+^0\gamma_+\left(\frac{E^2}{\kappa}+\frac{E\calV}{\kappa}\right)\right)\exp(-\frac{\calV^2}{2\kappa}) \beta^\top.  
\end{align*}
\end{lemma}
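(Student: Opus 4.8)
The plan is to mimic, for the factorisation $\calS^\eps = \widetilde{\calN^\eps}(\calN^\eps)^{-1}$ of \eqref{scat:vfp}, the two-term splitting already used for the chemotaxis model in the proof of Lemma~\ref{lem:limB}. Starting from $B^\eps = \frac{1}{\eps}\big(\widetilde{\calN^\eps}(\calN^\eps)^{-1} - \widetilde{\calN^0}(\calN^0)^{-1}\big)$ and writing $\widetilde{\calN^\eps} = \widetilde{\calN^0} + (\widetilde{\calN^\eps} - \widetilde{\calN^0})$, one gets
\[
B^\eps = \frac{1}{\eps}(\widetilde{\calN^\eps} - \widetilde{\calN^0})(\calN^\eps)^{-1}
+ \widetilde{\calN^0}(\calN^0)^{-1}\,\frac{1}{\eps}(\calN^0 - \calN^\eps)(\calN^\eps)^{-1},
\]
where the middle factor $\widetilde{\calN^0}(\calN^0)^{-1} = \begin{pmatrix}\mathbf{0}_K & \mathbf{I}_K - \zeta^0_+\gamma_+ \\ \mathbf{I}_K - \zeta^0_+\gamma_+ & \mathbf{0}_K\end{pmatrix}$ and the explicit block form of $(\calN^0)^{-1}$ in terms of $\gamma_+$, $\gamma_-$ and $\beta$ are both already available from the proof of Lemma~\ref{lem:scatdev2}. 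Everything then reduces to the first-order expansion of $\calN^\eps$ and $\widetilde{\calN^\eps}$ in $\eps$, after which $B^0 = \widetilde{\calN^1}(\calN^0)^{-1} - \widetilde{\calN^0}(\calN^0)^{-1}\calN^1(\calN^0)^{-1}$ with $\calN^1 := \pa_\eps\calN^\eps|_{\eps=0}$ and $\widetilde{\calN^1} := \pa_\eps\widetilde{\calN^\eps}|_{\eps=0}$.

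The point making this legitimate is that, near $\eps = 0$, every entry of $\calN^\eps$ and $\widetilde{\calN^\eps}$ is a $C^\infty$ function of $\eps$: it is a sum of a smooth part (the $\psi_0^\eps$, $\psi_{-0}^\eps$ and $\psi_\pm^\eps(\pm\calV)$ blocks, together with the constant factor $e^{-E\Delta x/\kappa} = e^{\mu_{-0}^\eps\Delta x/\eps}$ when $E>0$) and an exponentially flat part (the Knudsen factors $e^{-\mu_+^\eps\Delta x/\eps}$ and $e^{\mu_-^\eps\Delta x/\eps}$, which vanish at $\eps=0$ together with all their $\eps$-derivatives, since $\mu^0_{+\ell} > 0 > \mu^0_{-\ell}$ for $\ell\geq 1$). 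Hence $\calN^1$ and $\widetilde{\calN^1}$ are obtained by differentiating the smooth parts only, and the required building-block derivatives are immediate: from $\mu_{\pm\ell}^\eps = \frac{-\eps E \pm \sqrt{(\eps E)^2 + 4\kappa\ell}}{2\kappa}$ one reads $\pa_\eps\mu_{\pm\ell}^\eps|_{\eps=0} = -E/(2\kappa)$ and consequently $\pa_\eps\tilde v_{\pm\ell}^\eps|_{\eps=0} = 0$ for $\ell\geq 1$, so that $\pa_\eps\psi_{\pm\ell}^\eps(v)|_{\eps=0} = \big(Ev/(2\kappa)\big)\psi_{\pm\ell}^0(v)$; for the diffusion modes (taking $E>0$), $\psi_0^\eps(v) = \exp(-(v-\eps E)^2/2\kappa)$ gives $\pa_\eps\psi_0^\eps(v)|_{\eps=0} = \big(Ev/\kappa\big)\exp(-v^2/2\kappa)$, while $\psi_{-0}^\eps(v) = e^{-\eps^2 E^2/2\kappa}e^{-v^2/2\kappa}$ gives $\pa_\eps\psi_{-0}^\eps(v)|_{\eps=0} = 0$.

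With these derivatives I would assemble $\calN^1$ and $\widetilde{\calN^1}$ column by column (each Knudsen column contributing the zero vector), multiply on the right by $(\calN^0)^{-1}$, compose with the block matrix $\widetilde{\calN^0}(\calN^0)^{-1}$, and collapse the resulting products using the defining relations $\gamma_\ell^\top\psi_k^0(\calV) = \delta_{k\ell}$, $\gamma_\ell^\top\exp(-\calV^2/2\kappa) = 0$, $\beta^\top\psi_\ell^0(\calV) = 0$, $\beta^\top\exp(-\calV^2/2\kappa) = 1$ of \eqref{def:gam+}--\eqref{def:betavfp}, the completeness identity \eqref{betapsi}, the block identity \eqref{inv2} relating $\gamma_-$ to $\gamma_+$, the parity $\psi_{-\ell}^0(-v) = (-1)^\ell\psi_\ell^0(v)$, and the discrete orthogonality \eqref{eq:ortho-psi-dis}. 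Reading off the four $K\times K$ blocks of the result then produces the claimed $B^{10}, B^{20}, B^{30}, B^{40}$; the case $E<0$ is handled symmetrically, by exchanging the roles of the two non-damped modes $\Psi_0^\eps$ and $\Psi_{-0}^\eps$ (equivalently, of the columns carrying $\mu_0^\eps$ and $\mu_{-0}^\eps$).

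I expect the genuine difficulty to lie entirely in this last assembly step: keeping the $+$, $-$, $0$ and $-0$ blocks from getting scrambled, tracking the exponential weights $e^{\pm E\Delta x/\kappa}$ and the factor $1/(1-e^{-E\Delta x/\kappa})$ inherited from $(\calN^0)^{-1}$, and verifying that each cancellation indeed passes through \eqref{betapsi} and \eqref{eq:ortho-psi-dis} so that the $\zeta^0_+\gamma_+$ prefactors emerge exactly as stated. The only conceptual inputs beyond the earlier lemmas are the $C^\infty$-in-$\eps$ splitting of $\calN^\eps$, $\widetilde{\calN^\eps}$ into smooth and exponentially flat parts, and the three derivative formulas above; the remainder is bookkeeping of the same flavour as in Lemmas~\ref{lem:scatrad} and \ref{lem:limB}.
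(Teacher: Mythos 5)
Your proposal follows essentially the same route as the paper: the splitting $B^\eps = \frac{1}{\eps}(\widetilde{\calN^\eps}-\widetilde{\calN^0})(\calN^\eps)^{-1} + \widetilde{\calN^0}(\calN^0)^{-1}\frac{1}{\eps}(\calN^0-\calN^\eps)(\calN^\eps)^{-1}$ is just a regrouping of the paper's $B^\eps\calN^\eps=\frac{1}{\eps}\bigl(\widetilde{\calN^\eps}-\widetilde{\calN^0}(\calN^0)^{-1}\calN^\eps\bigr)$, and the reduction to first-order $\eps$-derivatives of the smooth entries (the Knudsen columns being exponentially flat), followed by contraction with $(\calN^0)^{-1}$ via \eqref{def:gam+}, \eqref{def:betavfp}, \eqref{betapsi}, \eqref{inv2} and \eqref{eq:ortho-psi-dis}, is exactly what the paper does. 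The one place you diverge is $\pa_\eps\psi_{-0}^\eps|_{\eps=0}$, which you compute to be $0$ (consistently with the displayed formula $\psi_{-0}^\eps(v)=e^{-|\eps E|^2/(2\kappa)}e^{-v^2/(2\kappa)}$), whereas the paper's proof uses $\lim_{\eps\to0}\frac1\eps\bigl(\psi_{-0}^\eps(\calV)-e^{-\calV^2/(2\kappa)}\bigr)=-\frac{E^2}{\kappa}e^{-\calV^2/(2\kappa)}$, the sole source of the $E^2/\kappa$ terms in the stated $B^{i0}$; the mismatch is immaterial because every such term sits inside $\zeta^0_+\gamma_+\bigl(\,\cdot\;e^{-\calV^2/(2\kappa)}\bigr)$ and is annihilated by $\gamma_+\,e^{-\calV^2/(2\kappa)}=\mathbf{0}$ from \eqref{def:gam+}, so your computation lands on the same matrices, merely without these identically vanishing summands.
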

\begin{proof}
Rewriting $B^\eps = \frac{1}{\eps}(\widetilde{\calN^\eps}-\widetilde{\calN^0}(\calN^0)^{-1}\calN^\eps)(\calN^\eps)^{-1}$, it comes,
$$
B^\eps \calN^\eps =\frac{1}{\eps}(\widetilde{\calN^\eps}-\widetilde{\calN^0}(\calN^0)^{-1}\calN^\eps) = \frac 1\eps
\begin{pmatrix}
A^{1\eps} & A^{2\eps} \\ A^{3\eps} & A^{4\eps}
\end{pmatrix},
$$
with
\begin{align*}
A^{1\eps} = & \begin{pmatrix} 
(\zeta^\eps + \zeta^0_+\gamma_+ \psi_+^\eps(-\calV)) e^{-\frac{\mu_{+}^\eps\Delta x}{\eps}}
& \quad \zeta_0^\eps + \zeta^0_+\gamma_+\psi_0^\eps(-\calV) 
\end{pmatrix};  \\
A^{2\eps} = & \begin{pmatrix} 
\zeta_-^\eps + \zeta_+^0 \gamma_+ \psi_-^\eps(-\calV)
& \quad \zeta_{-0}^\eps + \zeta^0_+ \gamma_+ \psi_{-0}^\eps(-\calV)
  - (\zeta_0^\eps + \zeta_+^0\gamma_+ \psi_{0}^\eps(-\calV))  e^{-\frac{E \Delta x}{\kappa}}
\end{pmatrix};  \\ 
A^{3\eps} = & \begin{pmatrix} 
\zeta_+^0\gamma_+ \psi_+^\eps(\calV) -\zeta_+^\eps
& \quad \zeta_+^0\gamma_+ \psi_{0}^\eps(\calV) -\zeta_0^\eps
\end{pmatrix}; \\
A^{4\eps} = & \begin{pmatrix} 
(\zeta_+^0\gamma_+ \psi_-^\eps(\calV) -\zeta_-^\eps) e^{\frac{\mu_{-}^\eps\Delta x}{\eps}}
& \quad (\zeta_0^\eps-\zeta_{-0}^\eps + \zeta_+^0\gamma_+ (\psi_{-0}^\eps(\calV)-\psi_{0}^\eps(\calV))) e^{-\frac{E\Delta x}{\kappa}}
\end{pmatrix}.
\end{align*}
$$\boxed{
\mbox{When } E>0,\quad \zeta_{-0}^\eps = 0, \mbox{ whereas if } E<0, \quad \zeta_0^\eps = 0. }
$$
%
Accordingly, for $E>0$,
\begin{align*}
&\lim_{\eps\to 0} \frac{1}{\eps}\left(\psi_0^\eps(\calV) - \exp(-\frac{\calV^2}{2\kappa})\right) = \frac{d \psi_0^\eps(\calV)}{d\eps}_{|\eps=0} = \frac {E \calV}{\kappa} \exp(-\frac{\calV^2}{2\kappa});  \\
&\lim_{\eps\to 0} \frac{1}{\eps}\left(\psi_{-0}^\eps(\calV) - \exp(-\frac{\calV^2}{2\kappa})\right) = -\frac {E^2}{\kappa} \exp(-\frac{\calV^2}{2\kappa}); \\
&\lim_{\eps\to 0} \frac{1}{\eps} \zeta_0^\eps = \frac {2 E \calV}{\kappa} \exp(-\frac{\calV^2}{2\kappa}),
\end{align*}
and the expression (\ref{Bzero}) follows by doing $B^0= \left(\lim_{\eps\to 0} \frac{1}{\eps} A^\eps\right)(\calN^0)^{-1}$.
\qed\end{proof}

\subsection{Uniform accuracy with respect to $\eps$ of the final scheme}

We deduce the final scheme from \eqref{schemeAPWB}, which reads like \eqref{scheme1:chemo},
\begin{align}
\begin{pmatrix} f_j^{n+1}(\calV) \\ f_{j-1}^{n+1}(-\calV) \end{pmatrix} 
&+\frac{\Delta t}{\eps\Delta x}\VV 
\begin{pmatrix} f_j^{n+1}(\calV)-(\mathbf{I}_K-\zeta^0_{+}\gamma_{+}) f_j^{n+1}(-\calV) \\ 
f_{j-1}^{n+1}(-\calV)-(\mathbf{I}_K-\zeta^0_{+}\gamma_{+}) f_{j-1}^{n+1}(\calV) \end{pmatrix}   \nonumber \\
&=
\begin{pmatrix} f_j^{n}(\calV) \\ f_{j-1}^{n}(-\calV) \end{pmatrix}
+\frac{\Delta t}{\Delta x} \VV B^\eps_{j-\frac 12}
\begin{pmatrix} f_{j-1}^{n}(\calV) \\ f_j^{n}(-\calV) \end{pmatrix}.
\label{scheme1:vfp}
\end{align}
Denoting again $B^\eps = \begin{pmatrix} B^{1\eps} & B^{2\eps} \\ B^{3\eps} & B^{4\eps} \end{pmatrix}$,
the scheme \eqref{scheme1:vfp} rewrites as
\begin{equation}\label{scheme2:vfp}
\frac{1}{\eps}\calH^\eps 
\begin{pmatrix} f_j^{n+1}(\calV) \\ f_j^{n+1}(-\calV) \end{pmatrix} =
\begin{pmatrix} f_j^{n}(\calV)  \\ f_{j}^n(-\calV)) \end{pmatrix}
+ \frac{\Delta t}{\Delta x} \VV
\begin{pmatrix} B^{1\eps}_{j-\frac 12} f_{j-1}^n(\calV)+B^{2\eps}_{j-\frac 12} f_j^n(-\calV) \\
B^{3\eps}_{j+\frac 12} f_{j}^n(\calV)+B^{4\eps}_{j+\frac 12} f_{j+1}^n(-\calV) \end{pmatrix},
\end{equation}
where
$$
\calH^\eps = \eps \mathbf{I}_{2K} + \frac{\Delta t}{\Delta x}\VV
\begin{pmatrix} \mathbf{I}_K & \zeta^0_{+}\gamma_{+}-\mathbf{I}_K \\ 
\zeta^0_{+}\gamma_{+}-\mathbf{I}_K & \mathbf{I}_K \end{pmatrix}.
$$
After inversion of the matrix $\calH^\eps$,
by construction, the scheme (\ref{scheme1:vfp}) satisfies the well-balanced property and is asymptotic preserving. We also define:
\begin{equation}\label{def:sigma}
\sigma_0 = \sum_{k=1}^K \omega_k e^{-v_k^2/2\kappa}, \qquad
\sigma_2 = \sum_{k=1}^K \omega_k v_k^2\, e^{-v_k^2/2\kappa}.
\end{equation}
We can now state our main result for the Vlasov-Fokker-Planck equation:
\begin{theorem}\label{th:VFP}
The scheme \eqref{scheme2:vfp} is a well-balanced approximation for the Vlasov-Fokker-Planck 
system \eqref{VFP}.
It is uniformly accurate (AP) with respect to $\eps$. 
More precisely, if we assume that \eqref{hypV1}, \eqref{hypV2} and \eqref{eq:ortho-psi-dis} hold,
and that $\sigma_2=\kappa \sigma_0$ in \eqref{def:sigma}.
Then, as $\eps\to 0$, 
the macroscopic density $\rho_j^n:=\sum_{k=-K}^K \omega_k f_j^n(v_k)$ satisfies the 
Sharfetter-Gummel discretization \eqref{SGvfp}.
\end{theorem}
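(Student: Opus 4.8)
The plan is to follow the blueprint of Theorems \ref{th:RTE} and \ref{th:chemo}, specializing the abstract strategy of \S\ref{sec:idea} to the Vlasov-Fokker-Planck scheme \eqref{scheme2:vfp}. First I would record that $\calH^\eps = \eps \mathbf{I}_{2K} + \calH^0$ with
$$
\calH^0 = \frac{\Delta t}{\Delta x}\VV \begin{pmatrix} \mathbf{I}_K & \zeta^0_+\gamma_+ - \mathbf{I}_K \\ \zeta^0_+\gamma_+ - \mathbf{I}_K & \mathbf{I}_K \end{pmatrix},
$$
and then invoke Lemma \ref{Hzero} from the Appendix. The discrete orthogonality \eqref{eq:ortho-psi-dis} is exactly what is needed for its hypothesis: it gives $\sum_k \omega_k v_k (\zeta^0_+\gamma_+)_{k\ell}=0$ for every $\ell$ (expand $\zeta^0_+\gamma_+$ via \eqref{def:gam+}--\eqref{def:zeta+} as a sum over $\psi^0_\ell(\calV)-\psi^0_\ell(-\calV)$ weighted by the rows $\gamma_\ell^\top$). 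Consequently $\mathrm{Ker}(\calH^0)=\mathrm{Span}\{\mathbf 1_{\RR^{2K}}\}$ and $\mathrm{Im}(\calH^0)=\{(Z_1\ Z_2)^\top : \sum_k \omega_k({Z_1}_k+{Z_2}_k)=0\}$, which both states the global solvability (kernel a line of Maxwellians, range the hyperplane of zero-moment densities) and yields well-posedness of \eqref{scheme2:vfp}.

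Next I would carry out the Hilbert expansion $f=f^0+\eps f^1+o(\eps)$ inside \eqref{scheme2:vfp}. At order $\eps^0$ one gets $\calH^0 (\{f^0\}_j^{n+1}(\calV),\{f^0\}_j^{n+1}(-\calV))^\top = 0$, hence, using the kernel description, $\{f^0\}_j^{n+1}(\pm\calV) = c_j^{n+1}\mathbf 1_{\RR^K}$ for a scalar $c_j^{n+1}$; normalizing through $\rho_j^n := \sum_{k=-K}^K \omega_k f_j^n(v_k)$ and the hypothesis $\sigma_2=\kappa\sigma_0$ (which makes the discrete Maxwellian $e^{-v_k^2/2\kappa}$ behave, after quadrature, like the constant vector as far as the zeroth and second moments are concerned) one identifies $\{f^0\}_j^{n+1}(\pm\calV)=\frac{\rho_j^{n+1}}{2\sigma_0}\exp(-\tfrac{\calV^2}{2\kappa})$. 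At order $\eps^1$ one obtains
$$
\calH^0 \begin{pmatrix} \{f^1\}_j^{n+1}(\calV) \\ \{f^1\}_j^{n+1}(-\calV) \end{pmatrix} = \begin{pmatrix} \{f^0\}_j^n-\{f^0\}_j^{n+1} \\ \{f^0\}_j^n-\{f^0\}_j^{n+1} \end{pmatrix} + \frac{\Delta t}{\Delta x}\VV \begin{pmatrix} B^{10}_{j-\frac12}\{f^0\}_{j-1}^n + B^{20}_{j-\frac12}\{f^0\}_j^n \\ B^{30}_{j+\frac12}\{f^0\}_j^n + B^{40}_{j+\frac12}\{f^0\}_{j+1}^n \end{pmatrix},
$$
where $B^{i0}$ are given in the previous Lemma \eqref{Bzero}. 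The solvability condition is that the right-hand side lie in $\mathrm{Im}(\calH^0)$, i.e.\ its zeroth moment vanishes; taking $\sum_k \omega_k(\cdot)$ of both blocks.

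The computational heart — and the step I expect to be the main obstacle — is evaluating these zeroth moments of $B^{i0}\exp(-\tfrac{\calV^2}{2\kappa})$. Here one uses $\gamma_+ \exp(-\tfrac{\calV^2}{2\kappa})=\mathbf 0$ and $\beta^\top \exp(-\tfrac{\calV^2}{2\kappa})=1$ from \eqref{def:gam+}, \eqref{def:betavfp} to kill the $\gamma_\pm$-terms in $B^{i0}\mathbf 1$-type products, exactly as in the proof of Theorem \ref{th:chemo}; then $\sum_k \omega_k v_k(\zeta^0_+\gamma_+)_{k\ell}=0$ eliminates the $\zeta^0_+\gamma_+$ contributions after multiplication by $\VV$ and summation. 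What survives are the ``Maxwellian'' pieces carrying factors $\frac{E}{\kappa}\calV$ and $\frac{E^2}{\kappa}$ against $e^{-v_k^2/2\kappa}$; using $\sigma_2=\kappa\sigma_0$ one checks $\sum_k \omega_k v_k \cdot \frac{E\calV}{\kappa} e^{-v_k^2/2\kappa} = E\sigma_0$ and the $E^2/\kappa$ and $v_k\cdot\calV$-cross terms reorganize so that, writing $E_{j-\frac12}$ for the interface field, the balance becomes
$$
0 = \rho_j^n - \rho_j^{n+1} + \frac{\Delta t}{\Delta x}\left( E_{j-\frac12}\frac{\rho_{j-1}^n - e^{-E_{j-\frac12}\Delta x/\kappa}\rho_j^n}{1 - e^{-E_{j-\frac12}\Delta x/\kappa}} - E_{j+\frac12}\frac{\rho_j^n - e^{-E_{j+\frac12}\Delta x/\kappa}\rho_{j+1}^n}{1 - e^{-E_{j+\frac12}\Delta x/\kappa}}\right),
$$
which is precisely \eqref{SGvfp}. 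The delicate bookkeeping is that $B^{10},B^{40}$ carry the $e^{-E\Delta x/\kappa}/(1-e^{-E\Delta x/\kappa})$ weight while $B^{20},B^{30}$ carry $1/(1-e^{-E\Delta x/\kappa})$, so the exponential-fitting numerator $\rho_{j-1}^n - e^{-E\Delta x/\kappa}\rho_j^n$ emerges only after correctly pairing the $j-\tfrac12$ flux contributions coming from the first block with those of the second block at the neighbouring interface; one must also track that the $\frac{2E\calV}{\kappa}$ terms and the $\zeta^0_+\gamma_+(\frac{E^2}{\kappa}+\frac{E\calV}{\kappa})$ terms combine, under the moment, into a single coefficient $E_{j\mp\frac12}$. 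The treatment of the case $E<0$ is symmetric (then $\zeta_0^\eps=0$ instead of $\zeta_{-0}^\eps=0$), giving the same scheme \eqref{SGvfp} with $e^{-E_{j-\frac12}\Delta x/\kappa}$ interpreted with the sign of $E$, which completes the proof. \qed
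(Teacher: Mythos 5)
Your overall architecture (IMEX splitting of $\calH^\eps$, Hilbert expansion, solvability condition in $\mathrm{Im}(\calH^0)$, moment computation of the $B^{i0}$ blocks) matches the paper's proof. But there is one genuine error at the pivotal step: you invoke Lemma \ref{Hzero} and conclude $\mathrm{Ker}(\calH^0)=\mathrm{Span}\{\mathbf 1_{\RR^{2K}}\}$. That lemma does not apply here. Its statement and proof are tailored to Case's eigenfunctions $\phi_\lambda(v)=1/(1-\lambda v)$: the kernel identification rests on converting the relations $Y_1=Y_2-\zeta\gamma Y_2$, $Y_2=Y_1-\zeta\gamma Y_1$ into a polynomial of degree $2(K-1)$ with $2K$ roots, which forces it to vanish. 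For the Vlasov--Fokker--Planck eigenfunctions $\psi^0_\ell$ (exponential polynomials), that rational structure is gone — this is precisely why the paper imposes the extra hypotheses \eqref{hypV1}--\eqref{hypV2} and proves a separate Lemma \ref{Hzero-vfp}, whose conclusion is $\mathrm{Ker}(\calH^0)=\mathrm{span}\bigl(\psi^0_0(\calV)\bigr)=\mathrm{span}\bigl(\exp(-\tfrac{\calV^2}{2\kappa})\bigr)$, \emph{not} the constant vector. Indeed $\zeta^0_+\gamma_+\,\mathbf 1_{\RR^{2K}}\neq 0$ in general here, since $\gamma_+$ annihilates $\exp(-\tfrac{\calV^2}{2\kappa})$ (by \eqref{def:gam+}) rather than $\mathbf 1_{\RR^K}$.

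As a consequence, your passage from ``$\{f^0\}_j^{n+1}(\pm\calV)=c_j^{n+1}\mathbf 1_{\RR^K}$'' to ``$\{f^0\}_j^{n+1}(\pm\calV)=\frac{\rho_j^{n+1}}{2\sigma_0}\exp(-\tfrac{\calV^2}{2\kappa})$'' is a non sequitur: a constant vector is not proportional to the discrete Maxwellian, and the hypothesis $\sigma_2=\kappa\sigma_0$ plays no role in characterizing the kernel — it is a quadrature-consistency condition used only at the very end, to identify the coefficient $\frac{\sigma_2}{\kappa\sigma_0}$ multiplying the discrete flux with $1$ so that \eqref{SGvfp} is recovered exactly. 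To repair the argument, replace the appeal to Lemma \ref{Hzero} by Lemma \ref{Hzero-vfp} (whose kernel statement needs \eqref{hypV1} and \eqref{hypV2}, while \eqref{eq:ortho-psi-dis} yields $\sum_k\omega_k v_k(\zeta^0_+\gamma_+)_{k\ell}=0$ and hence the range characterization), take $f^0$ proportional to $\exp(-\tfrac{\calV^2}{2\kappa})$ directly, and note that the moment computations then use $\gamma_\pm\exp(-\tfrac{\calV^2}{2\kappa})=\mathbf 0_{\RR^{K-1}}$ and $\beta^\top\exp(-\tfrac{\calV^2}{2\kappa})=1$ from \eqref{def:gam+} and \eqref{def:betavfp} — i.e.\ the Maxwellian, not $\mathbf 1_{\RR^K}$, is the vector that the $B^{i0}$ blocks act on. The remainder of your bookkeeping of the $\tfrac{E\calV}{\kappa}$ and $\tfrac{E^2}{\kappa}$ terms, and the pairing of the $j-\tfrac12$ and $j+\tfrac12$ contributions into the exponential-fitting fluxes, is consistent with the paper once this correction is made.
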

\begin{remark}
By integration by parts, we notice that, based on (\ref{def:sigma}),
$$
\int_{0}^{+\infty} v^2 e^{-v^2/(2\kappa)}\,dv = \kappa \int_0^{+\infty} e^{-v^2/(2\kappa)}\,dv.
$$
Therefore, assuming $\sigma_2 = \kappa \sigma_0$ in Theorem \ref{th:VFP} boils down to assume that the latter equality is also true at the discrete level.
\end{remark}
\begin{proof}
As above, we have $\calH^\eps = \calH^0 + \eps \mathbf{I}_{2K},$ where
$$
\calH^0 := \frac{\Delta t}{\Delta x} \VV
\begin{pmatrix} \mathbf{I}_K & \zeta^0_{+}\gamma _{+}- \mathbf{I}_K \\ 
\zeta^0_{+}\gamma_{+} - \mathbf{I}_K & \mathbf{I}_K \end{pmatrix}.
$$
Assuming that $f$ admits a Hilbert expansion $f=f^0 + \eps f^1 + o(\eps)$,
we get by identifying the terms in power of $\eps$ in \eqref{scheme2:vfp},
\begin{equation}\label{eqf0vfp}
\calH^0
\begin{pmatrix} \{f^0\}_j^{n+1}(\calV) \\ \{f^0\}_j^{n+1}(-\calV) \end{pmatrix} = 0,
\end{equation}
and
\begin{align}
\calH^0
\begin{pmatrix} \{f^1\}_j^{n+1}(\calV) \\ \{f^1\}_j^{n+1}(-\calV) \end{pmatrix}
= & \begin{pmatrix} \{f^0\}_j^{n}(\calV)-\{f^0\}_j^{n+1}(\calV) \\ \{f^0\}_j^{n}(-\calV)-\{f^0\}_j^{n+1}(-\calV) \end{pmatrix}
  \label{eqf1vfp}  \\
&+ \frac{\Delta t}{\Delta x} \VV 
\begin{pmatrix} B^{10}_{j-\frac 12} \{f^0\}_{j-1}^n(\calV)+B_{j-\frac 12}^{20} \{f^0\}_j^n(-\calV) \\[1mm]
 B_{j+\frac 12}^{30} \{f^0\}_{j}^n(\calV)+B_{j+\frac 12}^{40} \{f^0\}_{j+1}^n(-\calV) \end{pmatrix}. \nonumber
\end{align}
Under assumptions \eqref{hypV1}, \eqref{hypV2}, and \eqref{eq:ortho-psi-dis}, 
we may apply Lemma \ref{Hzero-vfp} in Appendix. Hence
\begin{itemize}
\item 
$\mbox{Ker}(\calH^0)=\mbox{span}\left(\exp(-\frac{\calV^2}{2\kappa})\right)=\mbox{span }\Big(\psi^0_0(\calV)\Big)$,
\item $\mbox{Im}(\calH^0) = \Big\{Z= (Z_1\ Z_2)^\top,\ Z_i\in \RR^{K} \mbox{ such that } 
\sum_{k=1}^K \omega_k ({Z_1}_k+{Z_2}_{k}) = 0\Big\}$.
\end{itemize}

Then, equation \eqref{eqf0vfp} implies that $f^0$ is an element of Ker($\calH^0$):
$$
\{f^0\}_j^{n+1}(\pm\calV)=\frac{\rho_j^{n+1}}{2\sigma_0} \exp(-\frac{\calV^2}{2\kappa}).
$$
Injecting this expression into \eqref{eqf1vfp}, we deduce,
\begin{align}
\label{eqf1vfp-b}
\calH^0
\begin{pmatrix} \{f^1\}_j^{n+1}(\calV) \\ \{f^1\}_j^{n+1}(-\calV) \end{pmatrix}
&= \frac{1}{2\sigma_0} \begin{pmatrix}
(\rho_j^{n} - \rho_j^{n+1}) \exp(-\frac{\calV^2}{2\kappa}) \\[1mm]
(\rho_j^{n} - \rho_j^{n+1}) \exp(-\frac{\calV^2}{2\kappa}) 
\end{pmatrix}
\\ & \!\!\!\!\! + \frac{\Delta t}{2\sigma_0\Delta x} \VV 
\begin{pmatrix} 
B_{j-\frac 12}^{10} \exp(-\frac{\calV^2}{2\kappa}) \rho_{j-1}^n +B^{20}_{j-\frac 12} \exp(-\frac{\calV^2}{2\kappa}) \rho_j^n \\[1mm]
 B^{30}_{j+\frac 12} \exp(-\frac{\calV^2}{2\kappa}) \rho_{j}^n+B^{40}_{j+\frac 12} \exp(-\frac{\calV^2}{2\kappa}) \rho_{j+1}^n 
\end{pmatrix}.\nonumber
\end{align}
This equation admits a solution iff the right hand side belongs to Im$(\calH^0)$.
Applying again Lemma \ref{Hzero-vfp}, we deduce the solvability condition:
\begin{align*}
  \rho_j^{n}-\rho_j^{n+1} &+ \frac{\Delta t}{2\sigma_0\Delta x}
\sum_{k=1}^K \omega_k v_k\left( (B_{j-\frac 12}^{10} \exp(-\frac{\calV^2}{2\kappa}))^{}_k \rho_{j-1}^n + (B_{j-\frac 12}^{20} \exp(-\frac{\calV^2}{2\kappa}))^{}_k\rho_j^n \right.\\
  & \left. + (B_{j+\frac 12}^{30} \exp(-\frac{\calV^2}{2\kappa}))^{}_k \rho_j^n
+ (B_{j+\frac 12}^{40} \exp(-\frac{\calV^2}{2\kappa}))^{}_k \rho_{j+1}^n
\right)=0.
\end{align*}
Using (see \eqref{def:gam+} and \eqref{def:betavfp})
$$\gamma_\pm \exp(-\frac{\calV^2}{2\kappa}) = \mathbf{0}_{\RR^{K-1}}, \qquad \beta^\top \mathbf{1}_{\RR^K} = 1,
$$ 
we deduce from the expression (\ref{Bzero}) of $B^0$,
\begin{align*}
& \sum_{k=1}^K \omega_k v_k (B_{j-\frac 12}^{10} \exp(-\frac{\calV^2}{2\kappa}))_k  =
\frac{1}{1-e^{-E_{j-\frac 12}\Delta x/\kappa}} \frac {2 E_{j-\frac 12}}{\kappa} \sum_{k=1}^K \omega_k v_k^2 \exp(-\frac{v_k^2}{2\kappa});  \\
&\sum_{k=1}^K \omega_k v_k (B_{j+\frac 12}^{20} \exp(-\frac{\calV^2}{2\kappa}))_k = 
- \frac{e^{-E_{j-\frac 12}\Delta x/\kappa}}{1-e^{-E_{j-\frac 12}\Delta x/\kappa}}\frac {2 E_{j-\frac 12}}{\kappa} \sum_{k=1}^K \omega_k v_k^2 \exp(-\frac{v_k^2}{2\kappa}); \\
&\sum_{k=1}^K \omega_k v_k (B_{j+\frac 12}^{30} \exp(-\frac{\calV^2}{2\kappa}))_k = 
- \frac{1}{1-e^{-E_{j+\frac 12}\Delta x/\kappa}} \frac {2 E_{j+\frac 12}}{\kappa} \sum_{k=1}^K \omega_k v_k^2 \exp(-\frac{v_k^2}{2\kappa});  \\
&\sum_{k=1}^K \omega_k v_k (B_{j+\frac 12}^{40} \exp(-\frac{\calV^2}{2\kappa}))_k = 
\frac{e^{-E_{j+\frac 12}\Delta x/\kappa}}{1-e^{-E_{j+\frac 12}\Delta x/\kappa}}
\frac{2E_{j-\frac 12}}{\kappa} \sum_{k=1}^K \omega_k v_k^2 \exp(-\frac{v_k^2}{2\kappa}).
\end{align*}
Thus, we may rewrite the limiting scheme as
\begin{align*}
0 &= \rho_j^{n}-\rho_j^{n+1} \\
&+ \frac{\sigma_2 \Delta t}{\sigma_0 \Delta x}
\left(\frac{E_{j-\frac 12}}{\kappa}\frac{\rho_{j-1}^n-e^{-E_{j-\frac 12} \Delta x/\kappa}\rho_j^n}
{1-e^{-E_{j-\frac 12} \Delta x/\kappa}}
- \frac{E_{j+\frac 12}}{\kappa}\frac{\rho_j^n-e^{-E_{j+\frac 12}\Delta x/\kappa}\rho_{j+1}^n}{1-e^{-E_{j+\frac 12}\Delta x/\kappa}}
\right).
\end{align*}
By assumption $\sigma_2 = \kappa\sigma_0$, we recognize the Sharfetter-Gummel scheme \eqref{SGvfp}.
\qed\end{proof}

\section{Conclusion and outlook}

In this paper, we proved (at the price of heavy technicalities) that most the discretizations advocated in \cite[Part II]{book}, for $1+1$ kinetic models endowed with a non-trivial diffusive limit, converge toward Il'in/Scharfetter-Gummel's ``exponential-fitting'' scheme. Such a property implies that IMEX discretizations (\ref{scheme2:chemo}) and (\ref{scheme2:vfp}), which rely on $S$-matrices, should be ``uniformly accurate'' (in the sense of, {\it e.g.}, \cite{berger,gartland,roos-book}) with respect to the Peclet number,
$$
\mbox{Pe}=|E_{j-\frac 1 2}| \mbox{ for (\ref{SGchemo})}, \qquad 
\mbox{Pe}=\left|\frac{E_{j-\frac 1 2}}{\kappa}\right|  \mbox{ for (\ref{SGvfp})}.
$$
In order to ensure strong stability properties for the overall discretization, the $S$-matrices should be endowed with left/right-stochastic properties, as recalled in Lemma \ref{lem:CGT}: in particular, the requirement (\ref{eq:massc}) is important for mass-preservation. This is closely related to ``{\it matrix balancing} techniques'' and Sinkhorn's algorithm, \cite{HRS,sink}. Especially, results in \cite{HRS} ensure that one can often adjust a slightly noisy $S$-matrix to recover a closely related one satisfying both (\ref{eq:massc}) and the ``right-stochastic'' criterion.

Besides, computations in Section \ref{sec:6} indicate the importance of having steady solutions $\RR^+_* \ni v \mapsto \Psi_{\pm n}(x,v)$ being a $T$-system for any $x>0$ and $n <N \in \NN$. Such an issue appears to be specific to kinetic models like Fokker-Planck equations, involving a local differential operator as the collision mechanism. Indeed, the corresponding stationary boundary-value problem is usually solved by means of a modulation of Sturm-Liouville eigenfunctions, for which the so--called ``Haar property'' (\ref{haar-det}) is not obvious; for instance it may lead to ``generalized polynomials'', like {\it e.g.} exponential ones, for which the equivalent condition (\ref{T-roots}) is not satisfied in general, as recalled in our Appendix \ref{sec:polya}

We now emphasize two possible applications of these schemes, one partly studied, the other being essentially an outlook extending the present work:
\begin{itemize}
\item Some $1+1$ kinetic models of chemotaxis dynamics exhibit (in sharp contrast with diffusive Keller-Segel approximations) {\it bi-stability phenomena of traveling waves}, within certain ranges of parameters. Practical computational simulations were achieved in \cite{CGT} thanks to numerical algorithms based on both $S$-matrices and $\calL$-splines discretizations of diffusive equations \cite{indam}.
\item In \cite[Chap. 6]{cerc} and \cite[Chap. 5]{cerc2006} Cercignani explains how the formalism of ``Case's elementary solutions'' can be extended to linearized BGK models of the Boltzmann equation in $1+1$ dimensions (see also the paper \cite{kcs} and numerical computations in \cite{KRM}). In particular, in \cite[pp. 106--108]{cerc2006}, a (formal) passage from kinetic heat transfer system toward linearized Navier-Stokes-Fourier equations. The discretization of the heat transfer system was presented in {\it e.g.} \cite[Chap. 14]{book}, so that some techniques developed in this paper may shed light onto the corresponding macroscopic behavior.
\end{itemize}

\section*{Acknowledgment} We gladly thank Prof. Christian Krattenthaller (Vienna) for his kind help in the study of the Haar property satisfied by exponential monomials.

This work is supported by French/Italian PICS project {\it MathCell} (CNRS/CNR). NV acknowledges partial support from french ``ANR Blanche'' project Kibord: ANR-13-BS01-0004.

\appendix

\section{Some properties on Case's eigenelements}

In this Appendix, we establish some useful properties on the eigenfunctions defined in Proposition \ref{prop:lambda}.
First we show that the set of Case's eigenfunctions is endowed with the Haar property (see Definition \ref{haar-def}).
\begin{proposition}\label{prop-zeta-gamma}
Denote $\phi_\lambda(v)= \frac{1}{1-\lambda\, v}$ for $\lambda\geq 0$, then the following properties hold:
\begin{itemize}
\item[(i)] Let $0<\lambda_1 <\ldots < \lambda_{K-1}$ and $0<v_1<\ldots<v_K$. We denote $\calV=(v_1,\ldots,v_K)^\top$.
Then, the family $\{\mathbf{1}_{\RR^K}, \phi_{\lambda_1}(\calV), \cdots , \phi_{\lambda_{K-1}}(\calV) \}$ is a basis of $\RR^K$.
\item[(ii)] The set $(\phi_{\lambda})_{\lambda \geq 0}$ is a Markov system on $\RR^+_*$ in the sense of Def. \ref{markov-def}.
\item[(iii)] There exists $\beta\in \RR^K$ and $\gamma\in\calM_{K-1\times K}(\RR)$ such that 
$$
\begin{pmatrix} \beta^\top \\ \gamma \end{pmatrix} = 
\Big(\mathbf{1}_{\RR^K} \quad \phi_{\lambda_1}(\calV) \quad \cdots \quad \phi_{\lambda_{K-1}}(\calV)\Big)^{-1}.
$$
\end{itemize}
\end{proposition}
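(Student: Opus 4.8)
The three statements (i)--(iii) of Proposition~\ref{prop-zeta-gamma} are all facets of a single fact: the functions $\{\mathbf{1},\phi_{\lambda_1},\ldots,\phi_{\lambda_{K-1}}\}$ form a Chebyshev $T$-system on $(0,+\infty)$, so any $K\times K$ alternant built from them is nonsingular. The plan is to prove (ii) first (the strongest and most structural statement), then deduce (i) as the special case where the sample points are $v_1<\cdots<v_K$ and one of the functions is the constant $\mathbf{1}=\phi_0$, and finally obtain (iii) as an immediate corollary of (i), since a basis of $\RR^K$ assembled as the columns of a $K\times K$ matrix means that matrix is invertible, hence its inverse has rows $\beta^\top,\gamma_1^\top,\ldots,\gamma_{K-1}^\top$ with the asserted block shape.

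For (ii), by Definition~\ref{markov-def} and Proposition~\ref{prop1:Haar} it suffices to show that for any $0\le\mu_1<\mu_2<\cdots<\mu_n$, a nonzero real combination
$$
g(v)=\sum_{i=1}^n a_i\,\phi_{\mu_i}(v)=\sum_{i=1}^n \frac{a_i}{1-\mu_i v}
$$
has at most $n-1$ zeros on $(0,+\infty)$. The natural route is to clear denominators: multiplying through by $\prod_{i=1}^n(1-\mu_i v)$ turns $g$ into a polynomial $P(v)$ of degree at most $n-1$ (the degree-$n$ terms cancel because $\sum_i a_i\mu_i\prod_{k\ne i}(-\mu_k)$ need not vanish --- wait, more carefully: each term $a_i\prod_{k\ne i}(1-\mu_k v)$ has degree exactly $n-1$, so $P$ has degree $\le n-1$). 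If $g$ had $n$ zeros in $(0,+\infty)$, then $P$ would have $n$ zeros there too (the poles $v=1/\mu_i$ are not zeros of $g$ and are accounted for), forcing $P\equiv 0$; but then a partial-fraction / residue argument (evaluate $P$ near $v=1/\mu_i$, or equivalently look at the coefficient structure) forces every $a_i=0$, a contradiction. One should be slightly careful with the pole at $v=0$ if some $\mu_i=0$: the constant function $\phi_0\equiv 1$ contributes no pole, so if $\mu_1=0$ the product is $\prod_{i\ge 2}(1-\mu_i v)$ and $P$ still has degree $\le n-1$; the argument is unchanged. This gives the Haar property on $(0,+\infty)$, i.e.\ the Markov-system claim (ii).

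For (i), apply (ii) with $n=K$, $\mu_1=0$, $\mu_{i+1}=\lambda_i$ for $i=1,\ldots,K-1$: the family $\{\mathbf{1},\phi_{\lambda_1},\ldots,\phi_{\lambda_{K-1}}\}$ is then a $T$-system, so by Definition~\ref{haar-def} the determinant of the matrix with columns $\mathbf{1}_{\RR^K},\phi_{\lambda_1}(\calV),\ldots,\phi_{\lambda_{K-1}}(\calV)$ is nonzero for the increasing points $v_1<\cdots<v_K$; hence those $K$ vectors are linearly independent in $\RR^K$, i.e.\ a basis. For (iii), invertibility of that same $K\times K$ matrix is exactly what (i) gives, so its inverse exists; writing the inverse in block-row form and naming the first row $\beta^\top$ and the remaining $K-1$ rows $\gamma$ yields the claimed identity. (That these $\beta,\gamma$ then automatically satisfy the orthogonality-type relations used elsewhere, e.g.\ $\gamma\,\phi_{\lambda_i}(\calV)=e_i$ and $\gamma\mathbf{1}=0$, is just a restatement of ``inverse'', and one can remark this.)

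\textbf{Main obstacle.} The only genuinely delicate point is the zero-counting step in (ii): making precise that clearing the $n$ linear denominators produces a polynomial of degree \emph{at most} $n-1$ (not $n$), and that a nonzero combination cannot collapse to the zero polynomial. This is the classical ``rational functions with prescribed simple poles form a $T$-system'' lemma; the cleanest way to seal it is to observe that if $P\equiv0$ then, evaluating the residue of $g$ at each simple pole $v=1/\mu_i$, one gets $a_i=0$ for all $i$. Everything else --- the reduction to Proposition~\ref{prop1:Haar}, the specialization to get (i), and the inverse-matrix bookkeeping for (iii) --- is routine. One should also recall explicitly (it is used implicitly) that $\phi_{\mu}$ for distinct $\mu\ge0$ are genuinely distinct functions and that on $(0,+\infty)$ none of the relevant denominators $1-\mu_i v$ vanishes at a common point, so the poles are simple and pairwise distinct.
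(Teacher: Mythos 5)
Your proposal is correct and rests on exactly the same computation as the paper's proof: clear the denominators to obtain a polynomial of degree at most $n-1$, observe it has too many roots and hence vanishes identically, then recover each coefficient by evaluating at the pole $v=1/\mu_i$ (the paper additionally isolates $a_0$ via the leading coefficient). The only difference is organizational --- you prove the Haar/root-counting property (ii) in general first and specialize to get (i), whereas the paper proves the linear independence (i) directly and then notes (ii) follows since $K$ and the $\lambda$'s were arbitrary --- which is an equivalent route through Proposition \ref{prop1:Haar}.
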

\begin{proof}These properties are shown by studying convenient polynomials.
\begin{itemize}
\item[(i)] As the family contains $K$ vectors, it suffices to show its linear independence: Assume
$$\exists a_0, a_1, ..., a_K, \qquad 
a_0 \mathbf{1}_{\RR^K} + \sum_{i=1}^{K-1} a_i \phi_{\lambda_i}(\calV) = \mathbf{0}_{\RR^K},
$$
let us show that $a_0=a_1=\ldots=a_{k-1}=0$.
Using the expression of $\phi_\lambda$ and multiplying, 
$$\forall k\in\{1,\ldots,K\}, \qquad 
\left(a_0 + \sum_{i=1}^{K-1} \frac{a_i }{1-\lambda_i v_k}\right)\times \prod_{j=1}^{K-1} (1-\lambda_j v_k) = 0.
$$
Thus, for any $k\in\{1,\ldots,K\}$, the polynomial
$$v \mapsto 
P(v) := a_0 \prod_{j=1}^{K-1} (1-\lambda_j v) + \sum_{i=1}^{K-1} a_i \prod_{j=1,j\neq i}^{K-1} (1-\lambda_j v)
$$
has degree $K-1$, but $K$ roots $\{v_1,\ldots,v_K\}$, so that $P$ is a null polynomial, i.e.
$$
 \forall v\in \RR, \qquad P(v) = 0.
$$  
Identifying the term of higher degree, we deduce that $a_0=0$. Then, taking $v=1/\lambda_i$, $i\in\{1,\ldots,K-1\}$, we obtain $a_i=0$, for all $i\in\{1,\ldots,K-1\}$. 
\item[(ii)] From (i), since the values of both $K \in \NN$ and $\lambda$'s are arbitrary, the set $(\phi_\lambda(v))_{\lambda \geq 0}$ clearly constitutes a Markov system.
\item[(iii)] The point (i) implies that the matrix $\Big(\mathbf{1}_{\RR^K} \quad \phi_{\lambda_1}(\calV) \quad \cdots \quad \phi_{\lambda_{K-1}}(\calV)\Big)$ is invertible.
\end{itemize}
\qed\end{proof}

For our next property, we consider two sets of positive numbers $0<\lambda_1<\ldots<\lambda_{K-1}$
and $0<\mu_1<\ldots<\mu_{K-1}$ with corresponding Case's eigenfunctions $(\phi_\lambda)_\lambda$ 
and $(\phi_\mu)_\mu$. We denote $\gamma_1$, respectively $\gamma_2$,
the corresponding matrices defined in Proposition \ref{prop-zeta-gamma} (iii) for the set $(\lambda_i)_i$,
respectively $(\mu_i)$. We introduce
\begin{align*}
&\zeta_1=\Big(\phi_{\lambda_1}(\calV)-\phi_{\lambda_1}(-\calV), \ldots, \phi_{\lambda_{K-1}}(\calV)-\phi_{\lambda_{K-1}}(-\calV)\Big)  \\
&\zeta_2=\Big(\phi_{\mu_1}(\calV)-\phi_{\mu_1}(-\calV), \ldots, \phi_{\mu_{K-1}}(\calV)-\phi_{\mu_{K-1}}(-\calV)\Big)
\end{align*}
Then, let us denote
$$
\mathcal{H} = 
\begin{pmatrix} \mathbf{I}_K & \zeta_2\gamma_2 - \mathbf{I}_K \\ 
\zeta_1\gamma_1 - \mathbf{I}_K & \mathbf{I}_K \end{pmatrix}.
$$
The following Lemma shed light onto the kernel and the range of $\mathcal{H}$:
\begin{lemma}\label{Hzero}
With the above notations, let us assume moreover that
$$
\forall\, i, \quad \sum_{k=1}^K \omega_k v_k (\phi_{\lambda_i}(v_k)-\phi_{\lambda_i}(-v_k)) = 
\sum_{k=1}^K \omega_k v_k (\phi_{\mu_i}(v_k)-\phi_{\mu_i}(-v_k)) = 0.
$$
Then, the matrix $\mathcal{H}$ is such that:
\begin{itemize}
\item Ker$(\mathcal{H})= 
\mbox{Span}(\mathbf{1}_{\RR^{2K}})$,
\item Im$(\mathcal{H})=
\Big\{Z= (Z_1\ Z_2)^\top,\ Z_i\in \RR^{K} \mbox{ such that } 
\sum_{k=1}^K \omega_k ({Z_1}_k+{Z_2}_{k}) = 0\Big\}$.
\end{itemize}
\end{lemma}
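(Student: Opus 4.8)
The plan is to analyze the $2K \times 2K$ matrix $\mathcal{H}$ block by block, using the structure coming from Proposition \ref{prop-zeta-gamma}. First I would describe the kernel. A vector $(X_1, X_2)^\top \in \ker(\mathcal{H})$ satisfies $X_1 = (\mathbf{I}_K - \zeta_2\gamma_2)X_2$ and $X_2 = (\mathbf{I}_K - \zeta_1\gamma_1)X_1$. The key observation is that $\mathbf{I}_K - \zeta\gamma$ is the ``reflection-type'' operator identified in the remark after Proposition \ref{lem:scatrad}: if $X = a_0 \mathbf{1}_{\RR^K} + \sum_\ell a_\ell \phi_{\lambda_\ell}(\calV)$ is the (unique, by Proposition \ref{prop-zeta-gamma}(i)) decomposition of $X$, then $\gamma X = (a_1, \ldots, a_{K-1})^\top$ and $(\mathbf{I}_K - \zeta\gamma)X = a_0 \mathbf{1}_{\RR^K} + \sum_\ell a_\ell \phi_{\lambda_\ell}(-\calV)$. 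So the map $\mathbf{I}_K - \zeta_1\gamma_1$ keeps the ``constant part'' $a_0$ and flips each damped mode $\phi_{\lambda_\ell}(\calV) \mapsto \phi_{\lambda_\ell}(-\calV)$; similarly for $\mathbf{I}_K-\zeta_2\gamma_2$ with the $\mu$'s. Composing the two relations forces, after matching coefficients in the respective bases, that all damped-mode coefficients vanish and $X_1, X_2$ are both proportional to $\mathbf{1}_{\RR^K}$ with the same constant; hence $\ker(\mathcal{H}) = \mathrm{Span}(\mathbf{1}_{\RR^{2K}})$. (One must check that a vector of the form $a_0\mathbf{1} + \sum a_\ell \phi_{\lambda_\ell}(-\calV)$ can only be re-expanded in the $\{\mathbf{1}, \phi_{\mu_i}(\calV)\}$ basis with zero damped coefficients if all $a_\ell = 0$; this follows from Proposition \ref{prop1:Haar}, i.e. the Haar/$T$-system property, since such a function would otherwise have too many sign changes.)

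Next, for the range, I would use a rank/dimension argument together with an explicit determination of $\mathrm{Im}(\mathcal{H})^\perp$ with respect to the weighted pairing. Since $\dim\ker(\mathcal{H}) = 1$, we have $\dim\mathrm{Im}(\mathcal{H}) = 2K-1$, so it suffices to exhibit one nonzero linear functional vanishing on $\mathrm{Im}(\mathcal{H})$ and check it is the claimed one. Consider the functional $Z = (Z_1, Z_2)^\top \mapsto \sum_k \omega_k ({Z_1}_k + {Z_2}_k)$. Applying it to $\mathcal{H}(X_1, X_2)^\top$ gives $\sum_k \omega_k\big[(X_1)_k + ((\zeta_2\gamma_2 - \mathbf{I}_K)X_2)_k + ((\zeta_1\gamma_1-\mathbf{I}_K)X_1)_k + (X_2)_k\big]$, which collapses to $\sum_k \omega_k \big[(\zeta_2\gamma_2 X_2)_k + (\zeta_1\gamma_1 X_1)_k\big]$. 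Here is where the extra hypothesis enters: the assumption $\sum_k \omega_k v_k(\phi_{\lambda_i}(v_k) - \phi_{\lambda_i}(-v_k)) = 0$ — wait, I need $\sum_k \omega_k (\zeta\gamma\cdot)_k$, not $\sum_k \omega_k v_k(\cdots)$. Let me reconsider: the relevant identity is that each column of $\zeta_1$ has zero $\omega$-weighted sum, i.e. $\sum_k \omega_k (\phi_{\lambda_i}(v_k) - \phi_{\lambda_i}(-v_k)) = 0$; but this is automatic since $\phi_{\lambda_i}(v_k) + \phi_{\lambda_i}(-v_k)$ pairs symmetrically — actually $\sum_k \omega_k(\phi_{\lambda_i}(v_k) - \phi_{\lambda_i}(-v_k))$ need not vanish without the stated hypothesis in the form relating to $v_k$. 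I would therefore verify carefully that the stated hypothesis $\sum_k \omega_k v_k(\phi_{\lambda_i}(v_k) - \phi_{\lambda_i}(-v_k)) = 0$ is exactly what makes $\Gamma$-conjugated $\mathbf{I}_K - \zeta\gamma$ preserve the correct moment (cf. the proof of Proposition \ref{corol1}), and deduce $\sum_k \omega_k(\zeta_i\gamma_i X)_k = 0$ for all $X$. Granting this, the functional vanishes identically on $\mathrm{Im}(\mathcal{H})$; by the dimension count it spans the annihilator, so $\mathrm{Im}(\mathcal{H})$ is exactly the stated hyperplane.

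The main obstacle I anticipate is the kernel computation: one must argue that composing the two ``mode-flip'' maps $\mathbf{I}_K - \zeta_1\gamma_1$ and $\mathbf{I}_K - \zeta_2\gamma_2$ — which live in \emph{different} bases $\{\mathbf{1}, \phi_{\lambda_i}(\pm\calV)\}$ and $\{\mathbf{1}, \phi_{\mu_i}(\pm\calV)\}$ — has no fixed subspace beyond the constants. The clean way is to translate the fixed-point equations into a statement about a generalized polynomial (a linear combination of $\phi_{\lambda_i}$, $\phi_{\mu_i}$, and $\mathbf{1}$, after clearing denominators) having more roots among $\{v_k\} \cup \{-v_k\}$ than its degree permits, invoking the Haar property of Proposition \ref{prop-zeta-gamma}(ii) and Proposition \ref{prop1:Haar}; the bookkeeping of degrees and the symmetry $\phi_\lambda(-v) = \phi_{-\lambda}(v)$ is where care is needed. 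The range part, once the moment identity is pinned down, is then routine.
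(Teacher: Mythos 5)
Your kernel argument is the same route as the paper's: write $X_1$ in the basis $\{\mathbf{1}_{\RR^K},\phi_{\lambda_i}(\calV)\}$ and $X_2$ in $\{\mathbf{1}_{\RR^K},\phi_{\mu_i}(\calV)\}$, use that $\mathbf{I}_K-\zeta_1\gamma_1$ fixes the constant part and sends $\phi_{\lambda_\ell}(\calV)\mapsto\phi_{\lambda_\ell}(-\calV)$, and convert the two fixed-point relations into a root-counting statement. The paper makes this precise by clearing denominators to obtain one polynomial $Q$ of degree $2(K-1)$ vanishing at the $2K$ points $\pm v_1,\dots,\pm v_K$, hence $Q\equiv 0$, then evaluating at $v=1/\lambda_\ell$ and $v=-1/\mu_\ell$. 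Be aware that your parenthetical ``re-expand $a_0\mathbf{1}+\sum a_\ell\phi_{\lambda_\ell}(-\calV)$ in the $\{\mathbf{1},\phi_{\mu_i}(\calV)\}$ basis'' uses only one of the two relations, i.e. $K$ scalar equations for $2K-1$ coefficients, which cannot suffice; you need both relations simultaneously, exactly as your closing paragraph describes. So the kernel part is the correct plan, though only a plan.

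The range part contains a genuine gap, and you half-noticed it. You correctly compute $Z_1+Z_2=\zeta_1\gamma_1Y_1+\zeta_2\gamma_2Y_2$ for the matrix $\mathcal{H}$ as displayed, and correctly observe that the hypothesis controls the $v$-weighted column sums $\sum_k\omega_k v_k\big(\phi_{\lambda_i}(v_k)-\phi_{\lambda_i}(-v_k)\big)$, not the plain ones. But the step you then propose, ``deduce $\sum_k\omega_k(\zeta_i\gamma_iX)_k=0$ for all $X$,'' does not follow: $\sum_k\omega_k\big(\phi_{\lambda_i}(v_k)-\phi_{\lambda_i}(-v_k)\big)=2\lambda_i\sum_k\omega_k v_k/(1-\lambda_i^2v_k^2)$ has no reason to vanish, whereas the hypothesis kills $2\lambda_i\sum_k\omega_k v_k^2/(1-\lambda_i^2v_k^2)$. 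The resolution is that in every application the matrix carries the diagonal factor $\VV$ in front (the schemes use $\calH_0=\frac{\Delta t}{\Delta x}\VV(\cdots)$), so that $Z_1+Z_2=\VV(\zeta_1\gamma_1Y_1+\zeta_2\gamma_2Y_2)$ and $\sum_k\omega_k(Z_{1k}+Z_{2k})$ is exactly the $v$-weighted quantity the hypothesis annihilates; indeed the paper's own proof inserts this $v_k$ even though the displayed definition of $\mathcal{H}$ omits $\VV$. For $\mathcal{H}$ taken literally, the image is instead the annihilator of the weights $\omega_kv_k$. You must either include the $\VV$ factor or adjust the image characterization accordingly; as written, the deduction is invalid. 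The surrounding scaffolding (rank-nullity giving $\dim\mathrm{Im}(\mathcal{H})=2K-1$, then exhibiting one annihilating functional) coincides with the paper's argument and is fine once the correct one-form is identified.
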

\begin{proof}
\begin{itemize}
\item 
Pick $Y=(Y_1\ Y_2)^\top\in \mbox{Ker}(\mathcal{H})$, then 
$$
Y_1-Y_2 = \zeta_1 \gamma_1 Y_1 = -\zeta_2 \gamma_2 Y_2.
$$ 
Since, from Proposition \ref{prop-zeta-gamma}, the families $\{\mathbf{1}_{\RR^K},\phi_{\lambda_1}(\calV),\ldots,\phi_{\lambda_{K-1}}(\calV)\}$ and \linebreak
$\{\mathbf{1}_{\RR^K},\phi_{\mu_1}(\calV),\ldots,\phi_{\mu_{K-1}}(\calV)\}$ are basis of $\RR^K$, we may write
$$
Y_1 = a_0 + \sum_{\ell=1}^{K-1} a_\ell \phi_{\lambda_\ell}(\calV), \qquad
Y_2 = b_0 + \sum_{\ell=1}^{K-1} b_\ell \phi_{\mu_\ell}(\calV).
$$
By definition of $\zeta_i$ and $\gamma_i$, $i=1,2$, we have 
$$
\zeta_1\gamma_1 Y_1 = \sum_{\ell=1}^{K-1} a_\ell(\phi_{\lambda_\ell}(\calV)-\phi_{\lambda_\ell}(-\calV)), \qquad
\zeta_2\gamma_2 Y_2 = \sum_{\ell=1}^{K-1} b_\ell(\phi_{\mu_\ell}(\calV)-\phi_{\mu_\ell}(-\calV)).
$$
Thus from the equalities $Y_1=Y_2-\zeta_2\gamma_2 Y_2$ and $Y_2=Y_1-\zeta_1\gamma_1 Y_1$, we deduce
\begin{align*}
& a_0-b_0+\sum_{\ell=1}^{K-1} \Big(a_\ell \phi_{\lambda_\ell}(\calV) - b_\ell \phi_{\mu_\ell}(-\calV)\Big) = 0,  \\
& a_0-b_0+\sum_{\ell=1}^{K-1} \Big(a_\ell \phi_{\lambda_\ell}(-\calV) - b_\ell \phi_{\mu_\ell}(\calV)\Big) = 0.
\end{align*}
We now proceed as in the proof of Proposition \ref{prop-zeta-gamma} by introducing the polynomial
\begin{align*}
&v\mapsto Q(v):=  (a_0-b_0)\prod_{i=1}^{K-1}(1-\lambda_i v) \prod_{j=1}^{K-1}(1+\mu_j v)  \\
&+ \sum_{\ell=1}^{K-1} a_\ell \prod_{i=1,i\neq \ell}^{K-1}(1-\lambda_i v) \prod_{j=1}^{K-1}(1+\mu_j v)  
 - \sum_{\ell=1}^{K-1} b_\ell \prod_{i=1}^{K-1}(1-\lambda_i v) \prod_{j=1,j\neq \ell}^{K-1}(1+\mu_j v).
\end{align*}
This is a polynomial of degree $2(K-1)$ which admits the $2K$ roots, $\pm v_1, \ldots, \pm v_K$ (from above equalities).
So it is the null polynomial. Picking the values $v=1/\lambda_\ell$ and $v=-1/\mu_\ell$, $\ell=1,\ldots,K-1$,
we deduce that $a_0=b_0$, $a_\ell=0$, and $b_\ell=0$, for $\ell=1,\ldots,K-1$. Therefore, $Y_1=Y_2=a_0 \mathbf{1}_{\RR^K}$.
\item Consider an element in the range of $\mathcal{H}$, $Z=\mathcal{H} Y$, with $Z=(Z_1\ Z_2)^\top$, $Y=(Y_1\ Y_2)^\top$, $Z_i\in \RR^K$, $Y_i\in \RR^K$, $i=1,2$. Then,
$$
\sum_{k=1}^K \omega_k({Z_1}_k + {Z_2}_k) 
= \sum_{k=1}^K \omega_k v_k \sum_{\ell=1}^{K} \big((\zeta_1 \gamma_1)_{k\ell} {Y_1}_\ell + (\zeta_2 \gamma_2)_{k\ell} {Y_2}_\ell\big).
$$
Applying our assumption, we get 
$$\forall \ell, \qquad 
\sum_{k=1}^K \omega_k v_k (\zeta_1 \gamma_1)_{k\ell}=0, \quad 
\sum_{k=1}^K \omega_k v_k (\zeta_2 \gamma_2)_{k\ell}=0,
$$
so, for any $Z=(Z_1\ Z_2)^\top\in \mbox{Im}(\mathcal{H})$, we have $\sum_{k=1}^K \omega_k ({Z_1}_k + {Z_2}_k) = 0$.
The dimension of $\mbox{Ker}(\mathcal{H})$ is 1, so, thanks to the rank-nullity Theorem, 
equalities are as claimed in Lemma \ref{Hzero}.
\end{itemize}
\qed
\end{proof}

\section{Properties of eigenelements of VFP}

This appendix is devoted to the proof of an 
analogue of Lemma \ref{Hzero} for the VFP case under assumptions
on the set of discrete velocities.
We first define the useful notations.
Let $\psi_\ell^0$, $\ell=0,\ldots,K-1$, be defined as in \eqref{eq:psi0}.
Let us assume that assumptions \eqref{hypV1}, \eqref{hypV2} and \eqref{eq:ortho-psi-dis} on the velocity quadrature hold.
Therefore, there exists $\beta\in\RR^K$ and $\gamma\in\calM_{K-1\times K}(\RR)$ such that
$$
\begin{pmatrix} \beta^\top \\ \gamma \end{pmatrix} = 
\Big(\psi_0^0(\calV) \quad \psi_1^0(\calV) \quad \cdots \quad \psi_{K-1}^0(\calV)\Big)^{-1}.
$$
We introduce $\zeta_\ell := \psi_\ell^0(\calV)-\psi_\ell^0(-\calV)$,
and $\zeta := \big(\zeta_1 \ \ldots \ \zeta_{K-1}\big) \in \mathcal{M}_{K\times K-1}(\RR)$.
Then we denote the matrix
$$
\mathcal{H} = \begin{pmatrix}
\mathbf{I}_K & \zeta\gamma - \mathbf{I}_K  \\
\zeta\gamma - \mathbf{I}_K & \mathbf{I}_K
\end{pmatrix}.
$$

\begin{lemma}\label{Hzero-vfp}
With the above notations, if we assume that \eqref{hypV1}, \eqref{hypV2}, and 
\eqref{eq:ortho-psi-dis} hold. 
Then, we have
\begin{itemize}
\item 
$\mbox{Ker}(\mathcal{H})=\mbox{span}\left(\exp(-\frac{\calV^2}{2\kappa})\right)=\mbox{span }\Big(\psi^0_0(\calV)\Big)$,
\item $\mbox{Im}(\mathcal{H}) = \Big\{Z= (Z_1\ Z_2)^\top,\ Z_i\in \RR^{K} \mbox{ such that } 
\sum_{k=1}^K \omega_k ({Z_1}_k+{Z_2}_{k}) = 0\Big\}$.
\end{itemize}
\end{lemma}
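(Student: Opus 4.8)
The plan is to mirror, step by step, the argument used for Lemma \ref{Hzero}, replacing the Case eigenfunctions $\phi_\lambda$ by the Vlasov--Fokker--Planck eigenfunctions $\psi_\ell^0$ and replacing the exponential monomial/polynomial device by the linear independence property established in Lemma \ref{free-lem} together with the Haar-type assumption \eqref{hypV1}. Throughout, I would use that by \eqref{hypV1} the matrix $\big(\psi_0^0(\calV)\ \psi_1^0(\calV)\ \cdots\ \psi_{K-1}^0(\calV)\big)$ is invertible, so $\{\psi_0^0(\calV),\ldots,\psi_{K-1}^0(\calV)\}$ is a basis of $\RR^K$; the vectors $\beta$ and $\gamma$ then satisfy $\beta^\top\psi_0^0(\calV)=1$, $\beta^\top\psi_\ell^0(\calV)=0$ for $\ell\ge 1$, and $\gamma_\ell^\top\psi_k^0(\calV)=\delta_{k\ell}$ with $\gamma_\ell^\top\psi_0^0(\calV)=0$; in particular $\zeta\gamma$ applied to $Y_1=\sum_{k=0}^{K-1}a_k\psi_k^0(\calV)$ returns $\sum_{\ell=1}^{K-1}a_\ell\zeta_\ell=\sum_{\ell=1}^{K-1}a_\ell(\psi_\ell^0(\calV)-\psi_\ell^0(-\calV))$, which kills the $\psi_0^0$-component.

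For the kernel: take $Y=(Y_1\ Y_2)^\top\in\mbox{Ker}(\mathcal H)$, so $Y_1-Y_2=\zeta\gamma Y_1=-\zeta\gamma Y_2$. Expand $Y_1=\sum_{k=0}^{K-1}a_k\psi_k^0(\calV)$ and $Y_2=\sum_{k=0}^{K-1}b_k\psi_k^0(\calV)$ in the basis. The two relations $Y_1=Y_2-\zeta\gamma Y_2$ and $Y_2=Y_1-\zeta\gamma Y_1$ become, using the computation of $\zeta\gamma$ above and the identity $\psi_\ell^0(-\calV)=(-1)^\ell\psi_{-\ell}^0(\calV)$,
\begin{align*}
&a_0\psi_0^0(\calV)+\sum_{\ell=1}^{K-1}\big(a_\ell\psi_\ell^0(\calV)-b_\ell\psi_\ell^0(-\calV)\big)=b_0\psi_0^0(\calV),\\
&b_0\psi_0^0(\calV)+\sum_{\ell=1}^{K-1}\big(b_\ell\psi_\ell^0(\calV)-a_\ell\psi_\ell^0(-\calV)\big)=a_0\psi_0^0(\calV),
\end{align*}
and $\psi_0^0$ is even. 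Adding these, one sees $(a_0-b_0)\psi_0^0(\calV)$ plus a combination of $\psi_\ell^0(\calV)\pm\psi_\ell^0(-\calV)$, $\ell\ge1$, vanishes; the decisive point is that assumption \eqref{hypV2} states precisely that the $2K$ vectors $\big(\psi_\ell^0(\calV),\psi_\ell^0(-\calV)\big)^\top$ and $\big(\psi_\ell^0(-\calV),\psi_\ell^0(\calV)\big)^\top$ (for $\ell=0,\ldots,K-1$, with the $\ell=0$ one appearing once by evenness) are linearly independent in $\RR^{2K}$. Rewriting the two vector equalities as a single relation among these $2K$ vectors and invoking \eqref{hypV2} forces $a_\ell=b_\ell=0$ for $\ell\ge1$ and $a_0=b_0$; hence $Y_1=Y_2=a_0\psi_0^0(\calV)$, i.e.\ $\mbox{Ker}(\mathcal H)=\mbox{span}(\psi_0^0(\calV))=\mbox{span}(\exp(-\calV^2/2\kappa))$, which is one-dimensional.

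For the range: if $Z=\mathcal H Y$ with $Z=(Z_1\ Z_2)^\top$, then $Z_1+Z_2=\zeta\gamma Y_1+\zeta\gamma Y_2$, so
$$\sum_{k=1}^K\omega_k\big({Z_1}_k+{Z_2}_k\big)=\sum_{\ell=1}^{K-1}\Big(\sum_{k=1}^K\omega_k(\zeta_\ell)_k\Big)\big((\gamma Y_1)_\ell+(\gamma Y_2)_\ell\big).$$
By \eqref{eq:ortho-psi-dis}, $\sum_{k=1}^K\omega_k(\zeta_\ell)_k=\sum_{k=1}^K\omega_k(\psi_\ell^0(v_k)-\psi_\ell^0(-v_k))=0$ for each $\ell=1,\ldots,K-1$ (here one also uses $\psi_{-\ell}^0(-v_k)=(-1)^\ell\psi_\ell^0(v_k)$ to reduce to positive $\ell$), so every element of $\mbox{Im}(\mathcal H)$ lies in the stated hyperplane $\{\sum_k\omega_k({Z_1}_k+{Z_2}_k)=0\}$. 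Since $\mbox{Ker}(\mathcal H)$ has dimension $1$, the rank--nullity theorem gives $\dim\mbox{Im}(\mathcal H)=2K-1$, which equals the dimension of that hyperplane; the two coincide, completing the proof.

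I expect the main obstacle to be the kernel computation, specifically the careful bookkeeping that turns the two vector identities into one relation to which assumption \eqref{hypV2} can be literally applied: in the Case-eigenfunction setting one factors through an explicit polynomial of degree $2(K-1)$ with $2K$ roots, but here the exponential-polynomial structure of the $\psi_\ell^0$ rules that trick out (this is exactly the difficulty flagged in \S\ref{sec:6}), so the linear-independence hypothesis \eqref{hypV2} has to do the work that the polynomial degree argument did before. One must be attentive to the fact that $\psi_0^0$ is even, so it contributes only a single vector $(\psi_0^0(\calV),\psi_0^0(\calV))^\top$ to the family in \eqref{hypV2}, and to the sign pattern $\psi_\ell^0(-\calV)=(-1)^\ell\psi_{-\ell}^0(\calV)$ when matching terms; once the bookkeeping is organized so that the coefficients of the $2K$ independent vectors of \eqref{hypV2} are read off, the conclusion $a_\ell=b_\ell=0$ ($\ell\ge1$), $a_0=b_0$ is immediate.
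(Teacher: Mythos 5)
Your argument follows the paper's proof essentially step for step: expand $Y_1,Y_2$ in the basis $\{\psi_0^0(\calV),\ldots,\psi_{K-1}^0(\calV)\}$ guaranteed by \eqref{hypV1}, reduce the kernel condition to the two vector identities, and conclude from \eqref{hypV2}; then identify the range by a zeroth-moment computation plus rank--nullity. Your kernel argument is in fact slightly more explicit than the paper's, which jumps from the two identities directly to ``from assumption \eqref{hypV2} we deduce\ldots'': your stacking of the two $\RR^K$ equations into a single relation among the $2K-1$ vectors of \eqref{hypV2} (with the even function $\psi_0^0$ contributing only the one vector $(\psi_0^0(\calV);\psi_0^0(\calV))^\top$) is exactly the missing bookkeeping, and it is correct. (The intermediate remark ``adding these'' is a detour you do not actually use; the stacked relation is what does the work.)

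The one concrete slip is in the range computation. You invoke \eqref{eq:ortho-psi-dis} as if it asserted $\sum_{k}\omega_k\big(\psi_\ell^0(v_k)-\psi_\ell^0(-v_k)\big)=0$, but the hypothesis carries the weight $v_k$: it reads $\sum_{k}\omega_k v_k\big(\psi_\ell^0(v_k)-\psi_\ell^0(-v_k)\big)=0$, and the unweighted sum has no reason to vanish. The paper's own proof keeps the factor $\omega_k v_k$ in the moment sum, consistent with the fact that the operator actually inverted in the scheme is $\calH^0=\frac{\Delta t}{\Delta x}\VV\,\mathcal{H}$, so that the solvability condition $\sum_k\omega_k({Z_1}_k+{Z_2}_k)=0$ for $Z\in\mbox{Im}(\VV\mathcal{H})$ becomes $\sum_k\omega_k v_k\big((\zeta\gamma(Y_1+Y_2))_k\big)=0$, to which \eqref{eq:ortho-psi-dis} applies verbatim. (Admittedly the appendix's definition of $\mathcal{H}$ omits the $\VV$, so the statement is ambiguous on this point; but as you have written it, the step does not follow from the stated hypothesis.) Reinstating the $v_k$ weight --- equivalently, carrying the $\VV$ factor through as in the main text --- repairs the argument, and the rank--nullity conclusion then goes through as you describe.
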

\begin{proof}
We proceed as in the proof of Lemma \ref{Hzero}.
\begin{itemize}
\item Let $Y=(Y_1\ Y_2)^\top\in \mbox{Ker}(\mathcal{H})$, then 
$$
Y_1-Y_2 = \zeta \gamma Y_1 = -\zeta \gamma Y_2.
$$ 
By assumption \eqref{hypV1}, the family $\{\psi_0^0(\calV),\, \psi_1^0(\calV), \ldots,\, \psi_{K-1}^0(\calV)\}$ 
is a basis of $\RR^K$, then, we may write
$$
Y_1 = \sum_{\ell=0}^{K-1} a_\ell \psi^0_{\ell}(\calV), \qquad
Y_2 = \sum_{\ell=0}^{K-1} b_\ell \psi^0_{\ell}(\calV).
$$
Simple computations using the definition of $\zeta$ and $\gamma$ and recalling that $\psi_0^0(\calV)=\exp(-\frac{\calV^2}{2\kappa})$, give
$$
\zeta\gamma Y_1 = \sum_{\ell=1}^{K-1} a_\ell(\psi_\ell^0(\calV)-\psi_{\ell}^0(-\calV)), \qquad
\zeta\gamma Y_2 = \sum_{\ell=1}^{K-1} b_\ell(\psi_\ell^0(\calV)-\psi_{\ell}^0(-\calV)).
$$
Thus the equalities $Y_1=Y_2-\zeta\gamma Y_2$ and $Y_2=Y_1-\zeta\gamma Y_1$ imply
\begin{align*}
& (a_0-b_0)\exp(-\frac{\calV^2}{2\kappa})+\sum_{\ell=1}^{K-1} \Big(a_\ell \psi_\ell^0(\calV) - b_\ell \psi_\ell^0(-\calV)\Big) = 0,  \\
& (a_0-b_0)\exp(-\frac{\calV^2}{2\kappa})+\sum_{\ell=1}^{K-1} \Big(a_\ell \psi_\ell^0(-\calV) - b_\ell \psi_\ell^0(\calV)\Big) = 0.
\end{align*}
From assumption \eqref{hypV2}, we deduce that $a_0=b_0$, $a_\ell=0$, $b_\ell=0$, for $\ell=1,\ldots,K-1$. As a consequence $Y_1=Y_2=a_0 \psi_0^0(\calV)$.

\item Consider an element in the range of $\mathcal{H}$, $Z=\mathcal{H} Y$, with $Z=(Z_1\ Z_2)^\top$, $Y=(Y_1\ Y_2)^\top$, $Z_i\in \RR^K$, $Y_i\in \RR^K$, $i=1,2$. Then,
$$
\sum_{k=1}^K \omega_k({Z_1}_k + {Z_2}_k) 
= \sum_{k=1}^K \omega_k v_k \sum_{\ell=1}^{K} (\zeta \gamma)_{k\ell} \big( {Y_1}_\ell + {Y_2}_\ell\big).
$$
Applying our assumption, we get 
$$\forall \ell, \qquad 
\sum_{k=1}^K \omega_k v_k (\zeta \gamma)_{k\ell}=0,
$$
so, for any $Z=(Z_1\ Z_2)^\top\in \mbox{Im}(\mathcal{H})$, we have $\sum_{k=1}^K \omega_k ({Z_1}_k + {Z_2}_k) = 0$.
The dimension of $\mbox{Ker}(\mathcal{H})$ is 1, so, thanks to the rank-nullity Theorem, rank$(\mathcal{H})=K-1$, which allows to conclude the proof.
\end{itemize}
\qed
\end{proof}

\section{Some properties of exponential polynomials}

\subsection{Elementary proof of the P\'olya-Szeg\"o estimate} \label{sec:polya}

Hereafter, following \cite[page 10]{karlin}, we establish {\it by induction} a simple bound on the number of real roots of an exponential polynomial; for various extensions, see \cite{marc,wol}
$$
\forall n \in \NN, \qquad
f_n(x)= \sum_{i=0}^{n-1} P_i(x) \exp(\mu_i\, x), \qquad \mu_i \in \RR,\quad \mbox{deg}(P_i)=k_i.
$$
We intend to show that, for any $n \in \NN$, $f_n$ admits {\it at most} $N_n-1$ roots, where
\begin{equation}\label{PS-bound}
N_n=\left(\sum_{i=0}^{n-1} (1+k_i)\right).
\end{equation}
We use an induction on $n$:
\begin{itemize}
\item for $n=1$, the exponential polynomial reads $f_1(x)=P_0(x) \exp(\mu_0\, x)$ so it admits at most $k_0=N_1-1$ roots.
\item Assume the property (\ref{PS-bound}) holds for $f_n$, so that it admits at most $N_n-1$ real roots. Let $M$ be the number ot real roots of $f_{n+1}$, and define
\begin{align*}
\forall x \in \RR, \qquad 
f_{n+1}(x)\exp(-\mu_{n}\,x)&=\sum_{i=0}^n P_i(x)\exp((\mu_i - \mu_{n})\,x)\\
&=P_n(x)+\sum_{i=0}^{n-1} P_i(x)\exp((\mu_i - \mu_{n})\,x).
\end{align*}
By the classical Rolle's theorem for smooth functions, its $(1+k_n)^{th}$ derivative
\begin{align*}
\forall x \in \RR, \qquad 
g_{n+1}(x)&= \frac{d^{(1+k_n)}}{dx^{(1+k_n)}}[f_{n+1}(x)\exp(-\mu_{n}\,x)] \\
&=\sum_{i=0}^{n-1}  \frac{d^{(1+k_n)}}{dx^{(1+k_n)}}[P_i(x)\exp((\mu_i - \mu_{n})\,x)],
\end{align*}
admits at least $M-(1+k_n)$ roots. But since $g_{n+1}$ is an exponential polynomial to which (\ref{PS-bound}) applies, it comes that
$$
M-(1+k_n) \leq N_n -1, \qquad \mbox{ so that }\quad M \leq N_n + (1+k_n) -1 := N_{n+1} -1.
$$ 
\end{itemize}

\subsection{Haar property for exponential monomials}

Although the former estimate suggests that exponential polynomials is not a Chebyshev $T$-system, {\it non-negative exponential monomials} do satisfy the Haar property on $(0,+\infty)$:
\begin{theorem}[Krattenthaller, \cite{Krat}]
\label{th:alternant}
Let $(x_0,x_1,\dots,x_{n-1}) \in \RR_+^n$, $(y_0,y_1,\dots,y_{n-1}) \in \RR^n_+$ 
be \underline{non-negative} with $y_0<y_1<\dots<y_{n-1}$. 
Moreover, let $(z_0,z_1,\dots,z_{n-1}) \in \NN^n$ be  
\underline{non-negative integers} with $z_0<z_1<\dots<z_{n-1}$. 
The generalized Vandermonde determinant,
\begin{equation} \label{eq:1} 
\det_{0\le i,j<n}\left(e^{y_jx_i}x_i^{z_j}\right)
\end{equation}
vanishes if and only if two of the $x_i$'s are equal to each other.
\end{theorem}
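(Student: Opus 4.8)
I would treat the two implications separately. The implication ``two of the $x_i$'s coincide $\Rightarrow$ the determinant vanishes'' is immediate, since then two rows of $(e^{y_jx_i}x_i^{z_j})$ are equal. For the converse, I would recast it as the assertion that the family $F_n=(v^{z_0}e^{y_0v},\dots,v^{z_{n-1}}e^{y_{n-1}v})$ is a Chebyshev $T$-system on $(0,+\infty)$ (Definition~\ref{haar-def}); by antisymmetry of the determinant under row permutations it suffices to treat ordered points $0<x_0<\cdots<x_{n-1}$ (and, if $x_0=0$ is admitted, one necessarily has $z_0=0$, so that $v^{z_0}e^{y_0v}=e^{y_0v}$ extends to $v=0$ and this case follows by letting $x_0\to0^+$). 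By Proposition~\ref{prop1:Haar} the whole statement is thus equivalent to: every nontrivial real combination $h(v)=\sum_{j=0}^{n-1}a_jv^{z_j}e^{y_jv}$ has at most $n-1$ zeros in $(0,+\infty)$.

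The heart of the argument is an induction on $n$ that exploits the fact that each monomial $v^{z_j}$ is strictly positive, hence zero-free, on $(0,+\infty)$; this is exactly the reason the much coarser P\'olya-Szeg\"o bound $\sum_j(z_j+1)-1$ of Appendix~\ref{sec:polya} is not sharp here. In the inductive step one may assume $a_0\neq0$ (otherwise $h$ involves only $n-1$ of the functions and the hypothesis applies). I would divide $h$ by $v^{z_0}e^{y_0v}$, which is strictly positive on $(0,+\infty)$ and so leaves the zero set there unchanged, then differentiate (losing at most one zero, by Rolle's theorem), then divide out the common power of $v$. What remains is a nontrivial combination of $n-1$ functions of the form $v^{b_i}p_i(v)e^{\mu_iv}$ with strictly increasing integer powers $b_1<\cdots<b_{n-1}$, strictly increasing real exponents $\mu_1<\cdots<\mu_{n-1}$, and positive polynomials $p_i$. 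These are no longer exponential monomials, so the induction must be carried through for a strengthened statement encompassing all families produced by iterating this ``divide/differentiate/divide'' operation. The reductions bottom out in explicit monotonicity checks: for $n=2$ one verifies that $v^{m}e^{wv}$ ($m\geq1$, $w>0$) is strictly increasing on $(0,+\infty)$, so $a_0+a_1v^me^{wv}$ has at most one zero; for $n=3$ the peeling reduces to showing that a function of the form $v^{\delta}\frac{c_2+d_2v}{c_1+d_1v}e^{cv}$ ($\delta=z_2-z_1\geq1$, $c,c_i,d_i>0$) is strictly monotone, and, after taking the logarithmic derivative and clearing denominators, this amounts to a cubic polynomial in $v$ with non-negative coefficients --- which holds precisely because $\delta\geq1$. \emph{The main obstacle is to organize this strengthened induction so that the relevant class of derived families is closed under the peeling operation and the positivity/monotonicity checks can be made uniformly; the integrality and strict ordering $z_0<z_1<\cdots$ is exactly what is consumed by those checks.}

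Finally, I would record a softer auxiliary argument that yields only a weaker bound but is clean, as motivation. The differential operator $L=\prod_{j=0}^{n-1}(D-y_j)^{z_j}$, with $D=d/dv$ and of order $Z:=\sum_jz_j$, satisfies $L\!\left(v^{z_k}e^{y_kv}\right)=z_k!\,\bigl(\prod_{j\neq k}(y_k-y_j)^{z_j}\bigr)e^{y_kv}$, so $Lh=\sum_k a_kc_ke^{y_kv}$ with all $c_k\neq0$ is a nontrivial exponential sum with $n$ distinct exponents; by the P\'olya-Szeg\"o estimate it has at most $n-1$ real zeros, while each factor $(D-y_j)f=e^{y_jv}\frac{d}{dv}(e^{-y_jv}f)$ costs at most one zero via Rolle, whence $h$ has at most $n-1+Z$ zeros in $(0,+\infty)$. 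Closing the gap of size $Z$ is precisely what the monomial structure must buy back, and that is where the careful induction above is unavoidable. Equivalently --- and this is the ``Hadamard product'' exception mentioned in Remark~\ref{rem-hadamard} --- the statement says that the kernel $(v,(y,z))\mapsto v^ze^{yv}$ on $(0,+\infty)$ is totally positive once the parameter $(y,z)$ is ordered lexicographically, i.e.\ that the Schur (entrywise) product of the generalized Vandermonde matrices $(x_i^{z_j})$ and $\bigl((e^{x_i})^{y_j}\bigr)$ is nonsingular whenever the $x_i$ are distinct.
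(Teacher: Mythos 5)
Your proposal is not a proof but a programme, and the step you yourself flag as ``the main obstacle'' is precisely the one that is missing. The reduction to the $T$-system statement via Proposition~\ref{prop1:Haar}, the trivial direction, the $n=2$ monotonicity check, and the $n=3$ computation (where the cubic obtained after clearing denominators does have non-negative coefficients thanks to $\delta\ge1$ --- I verified the coefficient of $v$ equals $\delta(c_1d_2+c_2d_1)+\mu c_1c_2+d_2c_1-d_1c_2\ge 2c_1d_2+\mu c_1c_2>0$) are all correct. But for $n\ge4$ the peeling operation leaves the class of ``power times positive polynomial times exponential'' after one step and produces, after two steps, combinations involving derivatives of ratios of the accumulated linear factors; you neither exhibit a class of functions closed under divide/differentiate/divide nor a uniform positivity certificate replacing the ad hoc cubic. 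Since the paper itself exhibits (in \S\ref{sec:6}) nontrivial combinations of exponential polynomials with \emph{more} than $n-1$ positive roots, the sign structure you need to propagate is delicate and cannot be waved through; the auxiliary operator argument with $L=\prod_j(D-y_j)^{z_j}$ only recovers the bound $n-1+\sum_jz_j$ and, as you note, does not close the gap. Two smaller defects: the claim that $x_0=0$ forces $z_0=0$ is not implied by the hypotheses (if $x_0=0$ and $z_0\ge1$ the whole row vanishes, so strictly one should read $x_i>0$ or treat the zero row by expansion, not by a limit $x_0\to0^+$, which anyway could only give that the determinant is a limit of nonzero numbers).

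For comparison, the paper's proof (attributed to Krattenthaler) goes a completely different way: one expands $e^{y_jx_i}=\sum_k(y_jx_i)^k/k!$ so that each entry becomes a convergent non-negative combination of pure powers $x_i^{k+z_j}$, and then a Cauchy--Binet/Lindstr\"om-type expansion writes the determinant as a sum, over increasing index sequences, of products of two generalized Vandermonde (Schur-function) determinants, each non-negative for distinct positive $x_i$'s and increasing $y_j$'s, with at least one strictly positive term. That algebraic total-positivity argument avoids root counting entirely and is what makes the simultaneous strict ordering of the $y_j$'s and $z_j$'s do its work; your analytic route would, if completed, give an independent proof, but as written the inductive engine is not built.
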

The proof of this result relies on an expansion of the exponential
and the use of Schur functions \cite{KratBW,LindAA,MacdAC}.


\end{document}